\newif\ifhinting\hintingtrue 
\newtheorem {theorem}    {Theorem}[section]
\newtheorem {lemma}      [theorem]    {Lemma}
\newtheorem {proposition}[theorem]    {Proposition}
\theoremstyle{definition}
\newtheorem {definition} [theorem]    {Definition}
\newtheorem {remark}    [theorem]    {Remark}
\newcounter{AbcT}
\numberwithin{equation}{section}
\newcommand{\IGNORE}[1]{}
\DeclareMathOperator{\supp}{supp}
 \DeclareMathOperator{\SL}{SL}
\DeclareMathOperator{\GL}{GL}
\DeclareMathOperator{\Ad}{Ad}
\newcommand\vol{\operatorname{vol}}
\newcommand{\diag}{\operatorname{diag}}
\newcommand\Lie{\operatorname{Lie}}
\begin{document}
\title{  Central limit theorems for lattice point counting on tessellated domains}

\begin{abstract}
Following the approach of Bj$\ddot{\text{o}}$rklund and Gorodnik, we have considered the discrepancy function for lattice point counting on domains that can be nicely tessellated by the action of a diagonal semigroup. We have shown that suitably normalized discrepancy functions for lattice point counting on certain tessellated domains satisfy a non-degenerate central limit theorem. Furthermore, we have also addressed the same problem for affine and congruence lattice point counting, proving analogous non-degenerate central limit theorems for them. The main ingredients of the proofs are the method of cumulants and quantitative multiple mixing estimates.


\end{abstract}
\subjclass[2020]{11P21, 11J83, 60F05, 37A25} \keywords{
Lattice point counting, Central limit theorem, affine lattice, congruence lattice}
\author{Sourav Das}
 
\address{ School of Mathematics, Tata Institute of Fundamental Research,
Mumbai, India 400005
}
\email{iamsouravdas1@gmail.com, sourav@math.tifr.res.in}
\maketitle
\section{Introduction}\label{section:intro}
One of the central problems in the geometry of numbers is the lattice point counting problem in certain domains of Euclidean space. Minkowski's first theorem is one of the fundamental results in the geometry of numbers, which says that a centrally symmetric convex set $C$ of $\mathbb R^k$ with $\vol(C)>2^k$ contains a non-zero integer point. This theorem can be extended to a general lattice $g\mathbb Z^k$ for some $g \in \GL_k(\mathbb R).$ Before proceeding further, let us set a few notations. The space of all unimodular lattices of $\mathbb R^k$ is denoted by $X_k,$ which is naturally identified with $\SL_k(\mathbb R)/ \SL_k(\mathbb Z),$ which supports a natural $\SL_k(\mathbb R)$-invariant measure $\mu_k$ coming from the Haar measure on $\SL_k(\mathbb R)$ such that $\mu_k(X_k)=1.$ Throughout this article, given a finite set $B$, we denote the number of elements of $B$ by $|B|$. In $1945, $ Siegel (\cite{Si}) proved a probabilistic result of the first of its kind in the geometry of numbers. Suppose that $\Omega$ is a measurable subset of $\mathbb R^k,$ not containing zero. Then Siegel's mean value theorem (\cite{Si}) states that if we choose an unimodular lattice $\Lambda$ at random, on average $\Lambda $ contains $\vol(\Omega)$ many points of $\Omega.$ Along the same lines, Rogers (\cite{R1}) gives formulas for higher moments for the counting function $|\Lambda \cap \Omega|.$ To date, Siegel’s mean value theorem and Rogers's formulas have applications in numerous number theoretic and ergodic problems, such as lattice point counting, Oppenheim-type problems related to quadratic forms, effective ergodic theorems, and more.

There has been extensive study to understand the behavior of the error term $|\Lambda \cap \Omega| - \vol(\Omega)$ of lattice point counting problems for a varying, sufficiently nice family of domains. Let $\{\Omega_T\}$ be an increasing family of finite volume Borel subsets of $\mathbb R^k$ such that $\vol(\Omega_T) \rightarrow \infty$ as $ T \rightarrow \infty .$ In \cite{S1}, Schmidt prove that for $\mu_k$ almost every unimodular lattice $\Lambda$,
\begin{equation}\label{equ:11}
|\Lambda \cap \Omega_T|=\vol(\Omega_T) + O\left(\vol(\Omega_T)^{1/2} \log^2(\vol(\Omega_T))\right).
\end{equation}
Note that Schmidt's result was much more general than this; for simplicity, we are considering it in this form. The key ingredient in proving \eqref{equ:11} is Rogers's second moment formula (in particular, the discrepancy bound). One must note that we can view Schmidt's result as an analog of the Law of Large Numbers. This motivates the question of whether other probabilistic limit laws, such as the central limit theorem, the law of iterated logarithm, etc., also hold for the above kind of lattice point counting problems. In the paper \cite{BG2}, Bj$\ddot{\text{o}}$rklund  and Gorodnik studied the discrepancy functions given by
\begin{equation}\label{12}
    D_T(\Lambda)=|\Lambda \cap \Omega_T| - \vol(\Omega_T).
\end{equation}
and partially answered the above question by proving that suitably normalized discrepancy functions satisfy a non-degenerate central limit theorem for domains defined by products of linear forms. Our current project is devoted to proving certain central limit theorems for discrepancy functions of unimodular lattices, affine unimodular lattices, and congruence unimodular lattices for domains, which can be nicely tessellated by means of the action of a diagonal subgroup. The domains we have considered appear naturally in lattice point counting problems and Diophantine approximations (see \cite{DFV,BG1,AG}). It is worth mentioning a few recent works in this context. Recently, K. Holm proved a central limit theorem for symplectic lattice point counting in \cite{H}. Using the method of cumulants and quantitative multiple mixing estimate, Bj$\ddot{\text{o}}$rklund and Gorodnik (\cite{BG1}) proved a central limit theorem for Diophantine approximations. Prior to this, the results of this paper for unweighted cases were proved by Dolgopyat, Fayad, and Vinogradov in \cite{DFV}, although their approach was completely different from that of \cite{BG1}. In a very recent paper \cite{AG}, Aggarwal and Ghosh extended the results of \cite{BG1} for inhomogeneous Diophantine approximation and Diophantine approximation with congruence conditions.

\subsection{The space of affine unimodular lattices}\label{subsec:affine} We denote the space of affine unimodular lattices of $\mathbb R^k$ by $X_{k,a}.$ $\SL_k(\mathbb R) \ltimes \mathbb R^k $ acts naturally on $X_{k,a}$. The action is given by
$$ ((g,\mathbf{v}),\Lambda) \mapsto g\Lambda +\mathbf{v}, \,\, \,\, \text{for} \,\, (g,\mathbf{v}) \in \SL_k(\mathbb R) \ltimes \mathbb R^k \,\, \text{and} \,\, \Lambda \in X_{k,a}.$$
Note that the action is transitive and the stabilizer of $\mathbb Z^k$ is $ \SL_k(\mathbb Z) \ltimes \mathbb Z^k$. Hence, we have the following identification
 $$X_{k,a} \cong \SL_k(\mathbb R) \ltimes \mathbb R^k /  \SL_k(\mathbb Z) \ltimes \mathbb Z^k.$$

\noindent Since $\SL_k(\mathbb Z) \ltimes \mathbb Z^k$ is a lattice in $\SL_k(\mathbb R) \ltimes \mathbb R^k$, $X_{k,a}$ supports a natural $\SL_k(\mathbb R) \ltimes \mathbb R^k $-invariant probability measure $\mu_{k,a}$, coming from the left-invariant Haar measure on $\SL_k(\mathbb R) \ltimes \mathbb R^k. $ 
 
Let $G_k=\SL_k(\mathbb R)$ and $G_{k,a}= \SL_k(\mathbb R) \ltimes \mathbb R^k$. Then $G_{k,a}$ can be identified as a subgroup of $G_{k+1}$ in view of the map $h:G_{k,a} \rightarrow G_{k+1}$ defined by
\begin{equation}\label{equ:map_h}
    (g,\mathbf{v}) \mapsto \begin{bmatrix}
   g& \mathbf{v}\\0& 1
\end{bmatrix}
\quad \text{for} \,\, (g,\mathbf{v}) \in G_{k,a}.
\end{equation}
Clearly $h$ induces an injective map $\Tilde{h}:X_{k,a} \rightarrow X_{k+1}.$ Furthermore there is surjection $\pi : G_{k,a} \rightarrow G_k $ given by 
\begin{equation}
    \pi(g,\mathbf{v})=g \quad \text{for all} \,\, (g,\mathbf{v}) \in G_{k,a}.
\end{equation}
$\pi$ also induces a surjection $\Tilde{\pi}:X_{k,a} \rightarrow X_k.$ It is easy to observe that $\Tilde{\pi}$ is measure preserving.  
\subsection{The space of congruence lattices}
 Let $(\mathbf v,N ) \in \mathbb Z^k \times \mathbb N$ be such that $\gcd(\mathbf v, N)=1.$ Denote the space of all affine unimodular lattices of $\mathbb R^k$ of the form $g(\mathbb Z^k + \frac{\mathbf v}{N})$ by $X_{k,c}.$ We can identify $X_{k,c}$ with the homogeneous space $\SL_k(\mathbb R)/\Gamma_{\mathbf v,N}$ via 
 $$ g(\mathbb Z^k + \frac{\mathbf v}{N}) \mapsto g \Gamma_{\mathbf v,N},$$
 where $\Gamma_{\mathbf v,N}=\{g \in \SL_k(\mathbb Z): g \mathbf v \equiv \mathbf v \,\, (\text{mod N})\}$ is the stabilizer of $(\mathbb Z^k +\frac{\mathbf v}{N})$ in $\SL_k(\mathbb Z)$. Note that $\Gamma_{\mathbf{v},N}$ is a congruence subgroup, since it contains the principal congruence subgroup $\Gamma_N=\{g\in \SL_k(\mathbb Z):g \equiv Id \,\,(\text{mod N})\}.$ $\Gamma_N$ is a finite index subgroup (indeed it is normal) of $\SL_k(\mathbb Z)$ and $\SL_k(\mathbb Z)/\Gamma_N \cong \SL_k(\mathbb Z/N\mathbb Z).$ Thus $\Gamma_{\mathbf{v},N}$ is also a finite index subgroup of $\SL_k(\mathbb Z),$ and hence also a lattice in $\SL_k(\mathbb R).$ Since $\Gamma_{\mathbf v,N}$ is a lattice in $\SL_k(\mathbb R),$ there exists a $\SL_k(\mathbb R)$-invariant measure $\mu_{k,c}$ on $X_{k,c}$ coming from the Haar measure on $\SL_k(\mathbb R)$ such that $\mu_{k,c}(X_{k,c})=1.$

\subsection{Main results}
In this article, we are interested in studying the discrepancy functions on $X_k$, $X_{k, a}$, and $X_{k,c}$ for the domains of the following type.\\ 
Let $c_1,\dots,c_m \in \mathbb R_{>0}$    and $u_1,\dots,u_m\in \mathbb R_{>0}$ be such that $u_1 +\dots +u_m=n.$ Now for $T>0$, we consider the domains of the form
\begin{equation}\label{equ:domain}
    \Omega_T =\left\{(\mathbf x,\mathbf y) \in \mathbb R^{m} \times \mathbb R^n:   1 \leq \|\mathbf y\| \leq T, \,\,  |x_i|\|\mathbf y\|^{u_i} < c_i \,\, \text{for}\,\, i=1,\dots,m \right \},
\end{equation}
where $\mathbf x =(x_1,\dots,x_m),$ $\|\cdot\|$ be the usual Euclidean norm on $\mathbb R^n$ and $|\cdot|$ is the usual absolute value on $\mathbb R.$ Let
\begin{equation}\label{equ:normal_dist}
N_{\xi}(0,\sigma^2):=\frac{1}{\sqrt{2\pi \sigma}} \int_{-\infty}^{\xi} e^{\frac{-x^2}{2\sigma}} \,\, dx,
\end{equation}
denotes the normal distribution function with mean $0$ and variance  $\sigma^2$, $\zeta(x)=\sum_{j=1}^{\infty} \frac{1}{j^x}$ denotes the Riemann zeta function and $\omega_n$ denotes the $(n-1)$-dimensional surface area of the unit sphere $\mathbb S^{n-1}=\{\mathbf{x} \in \mathbb R^n:\|\mathbf{x}\|=1\}.$\\

 We now present the main results of this article, which state that the normalized discrepancy functions associated with the respective lattice (unimodular, affine unimodular, or congruence lattices) point counting problem over $\Omega_T$ satisfy non-degenerate central limit theorems. The following result is for unimodular lattices:
\begin{theorem}\label{thm:main_lattice}
    Let $\Omega_T$ be as in \eqref{equ:domain} and $m+n \geq 5$. Then  for any $\xi \in \mathbb R$
    \begin{equation}
        \mu_{m+n}\left\{ \Lambda \in X_{m+n}:\frac{|\Lambda \cap \Omega_T| - \vol(\Omega_T)}{\vol(\Omega_T)^{1/2}} < \xi \right \} \longrightarrow N_{\xi}(0,\sigma_u^2) \,\,\,\, \text{as} \,\, T \rightarrow \infty,
    \end{equation}
    where 
    \begin{equation}
    \sigma_u^2 := 2 \left(\frac{2\zeta(m+n-1)}{\zeta(m+n)}-1\right).
    \end{equation}
     
\end{theorem}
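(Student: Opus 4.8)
The plan is to realise the discrepancy $D_T$ as an ergodic sum along a diagonal orbit and then run the method of cumulants, following the approach of Bj\"orklund--Gorodnik. Set $k=m+n$ and consider the one-parameter diagonal subgroup
\[
a_s=\diag\bigl(e^{-u_1 s},\dots,e^{-u_m s},\,\underbrace{e^{s},\dots,e^{s}}_{n}\bigr)\in\SL_{k}(\mathbb R),
\]
which lies in $\SL_k(\mathbb R)$ precisely because $u_1+\dots+u_m=n$. A direct check shows that $a_s$ scales $\|\mathbf y\|$ by $e^{s}$ while leaving each quantity $|x_i|\,\|\mathbf y\|^{u_i}$ unchanged; hence, writing $F=\{(\mathbf x,\mathbf y)\in\Omega_T:1\le\|\mathbf y\|\le e\}$ for the fundamental slab, one obtains the tessellation $\Omega_{e^{N}}=\bigsqcup_{j=0}^{N-1}a_j F$ with $N=\lfloor\log T\rfloor$. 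Since each $a_j$ is volume preserving, $\vol(\Omega_{e^N})=N\,\vol(F)$, and the counting function splits as $|\Lambda\cap\Omega_{e^N}|=\sum_{j=0}^{N-1}\widehat{\mathbf{1}_F}(a_j^{-1}\Lambda)$, where $\widehat{\mathbf{1}_F}(\Lambda)=|\Lambda\cap F|$ is the Siegel transform of $\mathbf{1}_F$ (well defined since $0\notin F$). By Siegel's mean value theorem \cite{Si}, $\int_{X_{k}}\widehat{\mathbf{1}_F}\,d\mu_{k}=\vol(F)$, so putting $f=\widehat{\mathbf{1}_F}-\vol(F)$ yields the mean-zero representation
\[
D_{e^N}(\Lambda)=\sum_{j=0}^{N-1}f\bigl(a_j^{-1}\Lambda\bigr).
\]
For general $T$ the leftover partial slab contributes a single bounded-variance term, negligible after dividing by $\vol(\Omega_T)^{1/2}\asymp N^{1/2}$, so it suffices to treat $T=e^N$.

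Writing $S_N=\sum_{j=0}^{N-1}f\circ a_j^{-1}$, the goal becomes $S_N/\sqrt{N\,\vol(F)}\Rightarrow N(0,\sigma_u^2)$, where $\Rightarrow$ denotes convergence in law. The method of cumulants reduces this to two tasks: (i) showing $\tfrac1N\mathrm{Var}(S_N)\to\Sigma^2$ for some $\Sigma^2$ with $\Sigma^2/\vol(F)=\sigma_u^2$; and (ii) showing the higher cumulants satisfy $C_r(S_N)=o(N^{r/2})$ for every $r\ge3$. Since a centred Gaussian is determined by its cumulants, (i) and (ii) together force $S_N/\sqrt N$ to converge to a centred Gaussian of variance $\Sigma^2$.

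For (i), stationarity of the process (a consequence of the $a_s$-invariance of $\mu_{k}$) gives $\mathrm{Var}(S_N)=\sum_{|l|<N}(N-|l|)\,\rho(l)$ with autocorrelation $\rho(l)=\int_{X_{k}}f\cdot(f\circ a_l)\,d\mu_{k}$, so once $\sum_l|\rho(l)|<\infty$ one obtains $\tfrac1N\mathrm{Var}(S_N)\to\Sigma^2=\sum_{l\in\mathbb Z}\rho(l)$. Each $\rho(l)$ is evaluated by Rogers' second moment formula \cite{R1} applied to $\int_{X_k}\widehat{\mathbf{1}_F}\,\widehat{\mathbf{1}_{a_l F}}\,d\mu_{k}$; summing the resulting correlations over $l$ reassembles the slab integrals into integrals over the full cone $\{\|\mathbf y\|\ge1\}$, and the contribution of pairs of proportional primitive lattice vectors (summation over coprime scalings) produces exactly the zeta quotients, giving $\sigma_u^2=2\bigl(2\zeta(k-1)/\zeta(k)-1\bigr)$; remarkably this depends only on $k=m+n$ and not on the parameters $c_i,u_i$. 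Non-degeneracy is then immediate, since for $k\ge3$ one has $\zeta(k)<\zeta(k-1)<2\zeta(k-1)$, whence $\sigma_u^2>0$. For (ii) I would expand $C_r(S_N)=\sum_{j_1,\dots,j_r}\mathrm{Cum}\bigl(f\circ a_{j_1}^{-1},\dots,f\circ a_{j_r}^{-1}\bigr)$ and invoke the quantitative multiple mixing of the diagonal flow on $X_{k}$, i.e.\ the exponential decay of multicorrelations in terms of the minimal gap among the $j_i$, with Sobolev-norm dependence on the observables. This decay bounds the joint cumulants by an exponentially decaying function of the consecutive gaps, and the combinatorial counting lemma of \cite{BG2} then yields $C_r(S_N)=O(N)=o(N^{r/2})$ for $r\ge3$.

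The main obstacle is that the Siegel transform $\widehat{\mathbf{1}_F}$ is unbounded and non-smooth, so the mixing estimates, which are stated for smooth compactly supported observables, cannot be applied to $f$ directly. I would resolve this by the standard truncation-and-smoothing scheme: decompose $\widehat{\mathbf{1}_F}=\widehat{\mathbf{1}_F}^{\,\le L}+\widehat{\mathbf{1}_F}^{\,>L}$, mollify the bounded part $\widehat{\mathbf{1}_F}^{\,\le L}$ to a smooth observable while controlling the error coming from $\partial F$, run the cumulant and mixing argument on this smooth truncation, and bound the cumulative effect of the heavy tail $\widehat{\mathbf{1}_F}^{\,>L}$ using Rogers' higher moment formulas. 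This is precisely where the hypothesis $m+n\ge5$ enters: it guarantees that $\widehat{\mathbf{1}_F}\in L^p(X_{m+n})$ for some $p>4$ (indeed for all $p<m+n$), which supplies the moments needed to make the truncation errors summable and to keep the variance and cumulant estimates effective as $L\to\infty$. Controlling these truncation errors uniformly in $N$, so that the smooth model and the true discrepancy share the same Gaussian limit, is the technical heart of the argument.
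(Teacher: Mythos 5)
Your proposal follows essentially the same route as the paper: reduce to a geometric sequence of radii, tessellate $\Omega_T$ by the diagonal action so the discrepancy becomes an ergodic sum of Siegel transforms, compute the variance via Rogers' second moment formula (yielding the same zeta quotient), and kill the higher cumulants via quantitative multiple mixing together with the Bj\"orklund--Gorodnik combinatorial decomposition, after truncating and smoothing the Siegel transform using its $L^p$ integrability for $p<m+n$. The only differences are cosmetic (base $e$ versus base $2$ for the tessellation, and the combinatorial lemma is the one from the exponential-mixing CLT paper rather than the generic lattice point counting paper), so this matches the paper's argument.
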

\noindent We also have the analogous result for affine unimodular lattices:
\begin{theorem}\label{thm:main_affine}
    Let $\Omega_T$ be as in \eqref{equ:domain} and $m+n \geq 5$. Then  for any $\xi \in \mathbb R$
    \begin{equation}\label{equ:affine_main_conv}
        \mu_{m+n,a}\left\{ \Lambda \in X_{m+n,a}:\frac{|\Lambda \cap \Omega_T| - \vol(\Omega_T)}{\vol(\Omega_T)^{1/2}} < \xi \right \} \longrightarrow N_{\xi}(0,1) \,\,\,\, \text{as} \,\, T \rightarrow \infty.
    \end{equation}
     
\end{theorem}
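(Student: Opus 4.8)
The plan is to realize the affine discrepancy as an ergodic sum along a diagonal flow and then run the method of cumulants, exploiting the fact that the second-moment formula for affine lattices is simpler than its homogeneous counterpart. Set
\[
a_t = \diag\bigl(e^{-u_1 t},\dots,e^{-u_m t},\,e^{t},\dots,e^{t}\bigr) \in \SL_{m+n}(\R),
\]
with $n$ entries equal to $e^{t}$; this lies in $\SL_{m+n}(\R)$ precisely because $u_1+\dots+u_m=n$. The defining inequalities $|x_i|\,\|\mathbf y\|^{u_i}<c_i$ of $\Omega_T$ are invariant under $a_t$ while $\|\mathbf y\|$ is scaled by $e^{t}$, so, writing $F=\{(\mathbf x,\mathbf y): 1\le\|\mathbf y\|\le e,\ |x_i|\,\|\mathbf y\|^{u_i}<c_i\}$ for the fundamental slab, the domain decomposes up to a boundary piece of volume $O(1)$ as $\Omega_T=\bigsqcup_{j=0}^{N-1}a_j F$ with $N=\lfloor\log T\rfloor$. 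Identifying $a_t$ with $(a_t,0)\in G_{m+n,a}$ so that it preserves $\mu_{m+n,a}$, and setting $f=\widehat{\chi_F}-\vol(F)$ for the centered affine Siegel transform $\widehat{\chi_F}(\Lambda)=\sum_{v\in\Lambda}\chi_F(v)$, one gets
\[
|\Lambda\cap\Omega_T|-\vol(\Omega_T)=\sum_{j=0}^{N-1} f(a_{-j}\Lambda)+O(1).
\]
Since $\vol(\Omega_T)\asymp N$, the theorem reduces to a central limit theorem for $\vol(\Omega_T)^{-1/2}S_N$, where $S_N=\sum_{j=0}^{N-1}f\circ a_{-j}$.

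To prove this I would apply the method of cumulants: it suffices to show that the variance of $\vol(\Omega_T)^{-1/2}S_N$ converges to $1$ and that every cumulant of order $r\ge 3$ tends to $0$. The $r$-th cumulant of $S_N$ expands as a sum over tuples $(j_1,\dots,j_r)\in\{0,\dots,N-1\}^r$ of joint cumulants of $f\circ a_{-j_1},\dots,f\circ a_{-j_r}$, and the goal is to bound this by $o(\vol(\Omega_T)^{r/2})$.

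For the variance I would invoke the second-moment (Rogers-type) formula for the affine Siegel transform, which reads
\[
\int_{X_{k,a}}\widehat{\chi_A}\,\widehat{\chi_B}\,d\mu_{k,a}=\vol(A)\vol(B)+\vol(A\cap B).
\]
Because the tessellating pieces $a_iF$ and $a_jF$ are disjoint for $i\ne j$, this yields $\mathrm{Cov}(f\circ a_{-i},f\circ a_{-j})=0$ for $i\ne j$ and $\mathrm{Var}(f\circ a_{-j})=\vol(a_jF)$, whence $\mathrm{Var}(S_N)=\sum_{j=0}^{N-1}\vol(a_jF)=\vol(\Omega_T)+O(1)$ and the normalized variance converges to exactly $1$. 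This is precisely why the affine limit has unit variance, in contrast to the homogeneous formula whose extra $\zeta$-terms produce the constant $\sigma_u^2$ in Theorem \ref{thm:main_lattice}.

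The principal obstacle is the control of the cumulants of order $r\ge 3$, which requires two ingredients. First, $\widehat{\chi_F}$ is unbounded, being large on lattices with many short vectors, so one must truncate $f$ by a slowly growing cutoff and absorb the tails using the $L^p$-integrability of the Siegel transform; this is where the hypothesis $m+n\ge 5$ enters, guaranteeing enough finite moments for the cumulant expansion to converge. Second, for the truncated observables I would apply quantitative multiple mixing of the flow $\{a_t\}$ on $X_{m+n,a}$: the joint cumulant of $f\circ a_{-j_1},\dots,f\circ a_{-j_r}$ decays in the gaps $\min_{p\ne q}|j_p-j_q|$, so that only tightly clustered tuples, numbering $O(N)$, contribute appreciably, giving a total of $O(N)=o(\vol(\Omega_T)^{r/2})$ for $r\ge 3$. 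Assembling these estimates as in the cumulant arguments of Bj\"orklund--Gorodnik yields the vanishing of all higher cumulants and hence the Gaussian limit of unit variance. The delicate points are calibrating the truncation against the mixing rate and establishing quantitative multiple mixing with explicit rates for the affine, rather than linear, diagonal action.
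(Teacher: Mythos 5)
Your proposal follows essentially the same route as the paper: tessellating $\Omega_T$ by a one-parameter diagonal subgroup (your $a_j F$ is the paper's $c_0^{-k}\Omega_2$ up to a choice of base), computing the variance via the affine second-moment formula together with disjointness of the tiles to get limit $1$, and killing the higher cumulants by truncating the Siegel transform and invoking quantitative multiple mixing with the Bj\"orklund--Gorodnik cluster decomposition. The only caveat is that the joint cumulant does not literally decay in the minimal gap $\min_{p\neq q}|j_p-j_q|$ --- one needs the full partition decomposition into clustered and separated blocks, which you correctly defer to the Bj\"orklund--Gorodnik machinery --- so the outline is sound and matches the paper's argument.
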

\noindent Furthermore, we have an analogous central limit theorem for congruence lattices:
 \begin{theorem}\label{thm:main_congruence}
     Let $\Omega_T$ be as in \eqref{equ:domain} and $m+n \geq 5$. Then for any $\xi \in \mathbb R$
    \begin{equation}
        \mu_{m+n,c}\left\{ \Lambda \in X_{m+n,c}:\frac{|\Lambda \cap \Omega_T| - \vol(\Omega_T)}{\vol(\Omega_T)^{1/2}} < \xi \right \} \longrightarrow N_{\xi}(0,\sigma_c^2) \,\,\,\, \text{as} \,\, T \rightarrow \infty,
    \end{equation}
    where 
    \begin{equation}
    \sigma_c^2 :=  \frac{2}{ \zeta_N(m+n)} \left(  1 + \frac{2}{\zeta_N(m+n)} \sum_{s \in C_N }  \sum_{ s_2 \geq 1} \frac{s_2 -1}{(Ns_2 +s)^{m+n}} \right),
    \end{equation}
    $$C_N=\{s\in \mathbb Z: 0 \leq s < N \,\, \text{and} \,\, \gcd(s,N)=1\},$$ 
    and $$\zeta_N(x)=\sum_{\tiny \begin{array}{c} s_1 \geq 1\\ \gcd(s_1,N)=1 \end{array} } s^{-x} .$$ 
\end{theorem}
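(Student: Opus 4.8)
The plan is to follow the Björklund–Gorodnik strategy, now over the finite cover $X_{m+n,c}$ of $X_{m+n}$, and to separate the two independent tasks: proving Gaussian convergence by the method of cumulants, and identifying the limiting variance $\sigma_c^2$ through a second moment formula for congruence lattices. Write $k=m+n$. First I would exhibit the tessellation. Setting $a=\diag(e^{-u_1},\dots,e^{-u_m},e,\dots,e)\in\SL_k(\mathbb R)$ (with $n$ entries equal to $e$, so $\det a=e^{-\sum u_i}e^{n}=1$), a direct check shows that $a$ fixes each product $|x_i|\|\mathbf y\|^{u_i}$ while dilating $\|\mathbf y\|$ by $e$. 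Hence, with the base shell $\mathcal F:=\Omega_{e}$, one has $\Omega_{e^N}=\bigsqcup_{j=0}^{N-1}a^{j}\mathcal F$ up to a null set, $C:=\vol(\mathcal F)$ is a fixed constant, and $\vol(\Omega_{e^N})=NC$. Consequently, with $F:=\widehat{\chi_{\mathcal F}}-C$ the centered Siegel transform on $X_{k,c}$,
\[
D_{e^N}(\Lambda)=\sum_{j=0}^{N-1}F(a^{-j}\Lambda).
\]
A monotonicity/sandwiching argument then reduces arbitrary $T$ to the values $T=e^{N}$ with negligible error, so it suffices to prove a central limit theorem for $S_N:=(NC)^{-1/2}\sum_{j<N}F\circ a^{-j}$ under $\mu_{k,c}$.

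Second, I would run the method of cumulants. The flow $a$ acts on $X_{k,c}$, a finite cover of $X_{k}$, so the quantitative multiple mixing estimates (decay of matrix coefficients from the spectral gap for $\SL_k$) transfer verbatim. After truncating the unbounded transform $\widehat{\chi_{\mathcal F}}$ at a slowly growing height and controlling the tail in $L^{p}$—valid once the Siegel transform has enough integrability, which is the role of the hypothesis $m+n\ge5$—I would expand the $r$-th cumulant of $S_N$ as a sum of connected correlations $\int F\circ a^{-j_1}\cdots F\circ a^{-j_r}\,d\mu_{k,c}$. Multiple mixing forces every term with a large gap between consecutive indices to be negligible, leaving $O(N)$ surviving clustered terms. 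Since $S_N$ carries the normalization $N^{-1/2}$, each cumulant of order $r\ge3$ is then $O(N^{1-r/2})\to0$, while the second cumulant converges; convergence of all cumulants to those of a Gaussian gives convergence in distribution.

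Finally, the heart of the matter is the variance. By $a$-invariance of $\mu_{k,c}$,
\[
\operatorname{Var}(S_N)=\frac{1}{C}\sum_{|d|<N}\Bigl(1-\tfrac{|d|}{N}\Bigr)\operatorname{Cov}_{\mu_{k,c}}\!\bigl(F,F\circ a^{d}\bigr)\xrightarrow[N\to\infty]{}\frac{1}{C}\sum_{d\in\mathbb Z}\operatorname{Cov}_{\mu_{k,c}}\!\bigl(F,F\circ a^{d}\bigr),
\]
provided the covariances are summable. Each covariance would be evaluated through a Rogers-type second moment formula on $X_{k,c}$: the first moment reproduces $\vol$, and the off-diagonal contribution comes from pairs of congruence-lattice vectors that are rational multiples of one another, i.e. integrals $\int_{\mathbb R^{k}}\chi_{\mathcal F}(p\mathbf z)\,\chi_{a^{d}\mathcal F}(q\mathbf z)\,d\mathbf z$ summed over admissible coprime $(p,q)$ subject to the congruence condition modulo $N$. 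I expect the main obstacle to lie exactly here: establishing the congruence second moment formula and then carrying out the bookkeeping of which scaled vectors survive modulo $N$, so that the double sum over $d$ and over $(p,q)$ collapses. The congruence restriction replaces the ordinary zeta values of the unimodular case by $\zeta_N$ and produces the residue sum over $C_N$, and tracking it carefully should yield precisely
\[
\sigma_c^2=\frac{2}{\zeta_N(m+n)}\Bigl(1+\frac{2}{\zeta_N(m+n)}\sum_{s\in C_N}\sum_{s_2\ge1}\frac{s_2-1}{(Ns_2+s)^{m+n}}\Bigr).
\]
A separate, comparatively soft argument would then confirm $\sigma_c^2>0$, establishing the asserted non-degeneracy.
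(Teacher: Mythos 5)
Your proposal follows essentially the same route as the paper: tessellation of $\Omega_T$ by a one-parameter diagonal semigroup (the paper uses base $2$ rather than $e$), reduction to exponential times, truncated and smoothed Siegel transforms, the Fr\'echet--Shohat cumulant criterion combined with quantitative multiple mixing on the finite cover, and a congruence Rogers second moment formula to identify $\sigma_c^2$. The one ingredient you flag as the main obstacle --- the second moment formula on $X_{k,c}$ with the congruence condition $s_2\equiv s_1 \pmod N$ --- is exactly what the paper imports from Ghosh--Kelmer--Yu, so your outline matches the paper's proof in all essentials.
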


\begin{remark}\label{remark:spi}$\ $
    \begin{enumerate}[label=({{\arabic*}})]
    \item The domains of the form $\Omega_T$ naturally appear in problems of Diophantine approximation (see \cite{DFV,BG1,AG}). One should note that the domains $\Omega_T$ are different from the domains considered in \cite{BG2}. Domains of type $\Omega_T$ have been studied before in \cite{DFV,BG1,AG}, where all the previous authors proved central limit theorems for Diophantine approximates. Also, it is worth mentioning that our main results do not seem to follow immediately from the main results of \cite{DFV,BG1,BG2,AG}.
        \item We give a detailed proof of Theorem \ref{thm:main_affine}. For the proof of Theorem \ref{thm:main_lattice}  and \ref{thm:main_congruence}, we omit some details, as they are analogous to the proof of Theorem \ref{thm:main_affine}. We indicate the necessary changes required in the proofs of  Theorem \ref{thm:main_lattice} and  \ref{thm:main_congruence}.  
    \end{enumerate}
\end{remark}

\subsection{Outline of the proof of the above Theorems} To prove the above theorems, we closely follow the strategy of Bj$\ddot{\text{o}}$rklund and Gorodnik (\cite{BG1}). We provide a brief outline of the proof for Theorem \ref{thm:main_affine}, noting that the proofs of the other two theorems follow a similar strategy. By an easy reduction, we can show that it is enough to prove Theorem \ref{thm:main_affine} for $T$ of the form $2^M,$ with $M \in \mathbb N$. If $T=2^M,$  $\Omega_T$ can be nicely tessellated by means of the action of a diagonal subgroup of $\SL_{m+n}(\mathbb R).$ To handle the case of $T=2^M,$ we use the method of cumulants and a powerful theorem of Fr\'{e}chet and Shohat (Theorem \ref{thm:CLT_crit}), which gives a criterion under which a sequence of bounded measurable functions on a probability space converge in distribution to the normal distribution. However, we cannot directly apply Theorem \ref{thm:CLT_crit} to the functions 
$$\Lambda \longmapsto \frac{|\Lambda \cap \Omega_T| - \vol(\Omega_T)}{\vol(\Omega_T)^{1/2}}, \,\,\text{where} \,\, T=2^M \,\, \text{with}\,\,
 M\in \mathbb N,$$
because these functions are typically unbounded. To address this issue, we exploit the tessellation property of our domain to observe that, for $T=2^M$, the conclusion of Theorem \ref{thm:main_affine} can be reformulated (see Theorem \ref{thm:affine_reduc}) in terms of $\SL_{m+n}(\mathbb R) \ltimes \mathbb R^{m+n}$-translates of the Siegel transform $\hat{\chi}$ of the indicator function $\chi$ of $\Omega_2$. Furthermore, we can approximate $\hat{\chi}$ by a family of smooth and bounded $C_c^{\infty}$ functions on the space of affine unimodular lattices of $\mathbb R^{m+n}$. We show that it is sufficient to prove the central limit theorem for $\SL_{m+n}(\mathbb R) \ltimes \mathbb R^{m+n}$-translates of the smooth approximation to $\hat{\chi}$ (see Proposition \ref{prop:H_epsilon_L}).  We then verify the CLT criteria of  Fr\'{e}chet and Shohat for these bounded functions. This amounts to showing that the variance is finite and that the cumulant of order $r$ ($r \geq 3$) for these functions tends to $0$ as $M \rightarrow \infty.$ In the computation of cumulants, the main tools are the quantitative multiple mixing estimate of \cite[Theorem 1.1]{BEG} and a combinatorial tool developed by Bj$\ddot{\text{o}}$rklund and Gorodnik in \cite[Proposition 6.2  ]{BG1'} to analyze the cumulants.

 \textbf{Acknowledgement :} The author is grateful to his advisor, Prof. Arijit Ganguly, for helpful discussions and continuous encouragement. Part of this work was done when the author was visiting the University of Z$\ddot{\text{u}}$rich to attend the Z$\ddot{\text{u}}$rich Dynamics School and the Z$\ddot{\text{u}}$rich Dynamics Conference in June $2023$. The author thanks the University of Z$\ddot{\text{u}}$rich for their generous hospitality. Thanks are also due to Prof. Micheal Bj$\ddot{\text{o}}$rklund and Prof. Alexander Gorodnik for helpful discussions during the Z$\ddot{\text{u}}$rich dynamics conference.
\section{Proof of the CLT for affine unimodular lattices}\label{sec:affine}
To begin with, we show that it is enough to prove Theorem \ref{thm:main_affine} for the case $T=2^M$ with $M \in \mathbb N$.
\begin{proposition}\label{prop:reduction_2M}
Suppose that Theorem \ref{thm:main_affine} holds true for $T$ of the form $2^M$ with $M \in \mathbb N$. Then Theorem \ref{thm:main_affine} holds true in its full generality.
\begin{proof}
    First, let us compute the volume of $\Omega_T.$ Let $ d \mathbf x $ and $d \mathbf y$ be the usual Lebesgue measure on $ \mathbb R^m$ and $\mathbb R^n$ respectively. Then
    \begin{eqnarray}\label{equ:omega_T_vol_p}
    \vol(\Omega_T) & = & {\int \int}_{\Omega_T} d\mathbf x \,\,  d  \mathbf y \nonumber \\ & = & \int_{1 \leq \|\mathbf y\| \leq T} \left(\int_{-\frac{c_1}{{\|\mathbf y\|}^{u_1}}}^{\frac{c_1}{{\|\mathbf y\|}^{u_1}}} \dots \int_{-\frac{c_m}{{\|\mathbf y\|}^{u_m}}}^{\frac{c_m}{{\|\mathbf y\|}^{u_m}}} dx_1 \dots dx_m \right) d\mathbf{y} \nonumber \\ &=& \int_{1 \leq \|\mathbf y\| \leq T} \frac{2^mc_1 \dots c_m}{\|\mathbf y\|^n}  d\mathbf y  
    \end{eqnarray}
Now we change the Cartesian coordinates $\mathbf{y}=(y_1,\dots,y_n)$ of $\mathbb R^n$ to spherical coordinates $(r,\theta_1,\dots,\theta_{n-1}).$ Let $r^{n-1} S(\theta_1,\dots,\theta_{n-1})drd\theta_1 \dots d\theta_{n-1}$ be the spherical volume element on $\mathbb R^n$ and
\begin{equation}\label{equ:omega_n_main}
    \omega_n =\int_{0}^{2\pi} \int_{0}^{\pi} \dots \int_{0}^{\pi} S(\theta_1,\dots,\theta_{n-1}) \,\, d\theta_1 \dots d\theta_{n-1}.
\end{equation}
Then, in view of the above coordinate change, we have
\begin{eqnarray}\label{equ:omega_T_vol}
    \vol(\Omega_T)  &=& 2^mc_1 \dots c_m \omega_n \int_{1}^{T} \frac{ 1}{r^n}r^{n-1}\,\, dr \nonumber \\ &=& 2^mc_1 \dots c_m \omega_n \log T.
    \end{eqnarray}

\noindent Given any real parameter $T>0$ there exists $M=M(T)\in \mathbb Z_{\geq 0}$ such that $\frac{T}{2} < 2^M \leq T \leq 2^{M+1}.$ Now for any affine lattice $\Lambda \in X_{m+n,a},$ we have 
\begin{eqnarray}
\frac{|\Lambda \cap \Omega_T|- \vol(\Omega_T)}{\vol(\Omega_T)^{1/2}} &=& \frac{|\Lambda \cap \Omega_{2^M}|- \vol(\Omega_{2^M})}{\vol(\Omega_T)^{1/2}} +\frac{|\Lambda \cap (\Omega_T \setminus \Omega_{2^M})|}{\vol(\Omega_T)^{1/2}}- \frac{\vol(\Omega_T \setminus \Omega_{2^M})}{\vol(\Omega_T)^{1/2}} \\ \nonumber &=& a_T X_T + Y_T - Z_T,
\end{eqnarray}
 where 
 \begin{equation}
     a_T=\left(\frac{\vol(\Omega_{2^M})}{\vol(\Omega_T)} \right)^{1/2}, X_T=\frac{|\Lambda \cap \Omega_{2^M}|- \vol(\Omega_{2^M})}{\vol(\Omega_{2^M})^{1/2}}, Y_T=\frac{|\Lambda \cap (\Omega_T \setminus \Omega_{2^M})|}{\vol(\Omega_T)^{1/2}}, Z_T= \frac{\vol(\Omega_T \setminus \Omega_{2^M})}{\vol(\Omega_T)^{1/2}}.
 \end{equation}
By assumption $X_T$ converges in distribution to the normal distribution $N(0,1)$ as $T \rightarrow \infty.$ Hence in order to show  \eqref{equ:affine_main_conv}, it is enough to show that $a_T \rightarrow 1,$ $Y_T \rightarrow 0$ and $Z_T \rightarrow 0$ as $T \rightarrow \infty.$

\noindent Note that $a_T=\left(\frac{M \log 2}{\log T} \right)^{1/2}$ and also we have 
\begin{equation*}
     \begin{array}{lcl}
     \displaystyle  \quad \quad \frac{T}{2} < 2^M \leq T \\
        \displaystyle   \implies  \log T-\log 2 < M \log 2 \leq \log T \\  \displaystyle \implies \left(1-\frac{\log 2}{\log T} \right)^{1/2} < a_T \leq 1 \\ \displaystyle \implies a_T \rightarrow 1 \,\, \text{as} \,\, T \rightarrow \infty.
         \end{array}
 \end{equation*}
Now
 \begin{eqnarray}
 Z_T &=& \gamma_{m,n} \frac{\log T -\log 2^{M}}{(\log T)^{1/2}}  = \gamma_{m,n} \frac{\log \frac{T}{2^M}}{(\log T)^{1/2}} \leq \gamma_{m,n} \frac{\log 4}{(\log T)^{1/2}} \rightarrow 0 \,\, \text{as} \,\, T \rightarrow \infty,
 \end{eqnarray}
where $\gamma_{m,n}$ is a constant depending upon $m$ and $n.$

\noindent Since $\vol(\Omega_T \setminus \Omega_{2^M}) \leq \log2 $ for all $T,$ we must have $Y_T \rightarrow 0.$ Hence we are done.
 
\end{proof}
\end{proposition}
Siegel transform is one of the main tools in the study of lattice point counting problems. Throughout this section, we denote the set of all affine unimodular lattices on $\mathbb R^{m+n}$ by $X_a$, $\mu_a$ denotes the measure $\mu_{m+n,a}$ defined in the last section, and $l:=m+n$. Let $f : \mathbb R^l \rightarrow \mathbb R$ be a Borel measurable function with compact support. The Siegel transform of $f$ is $\hat{f}:X_{a} \rightarrow \mathbb R$ defined by
\begin{equation}\label{equ:21}
\hat{f}(\Lambda) :=\sum_{\lambda \in \Lambda \setminus \{0\}} f(\lambda) \quad \text{for} \,\, \Lambda \in X_a.
\end{equation}
Let us record the analog of Siegel's mean value theorem and Rogers's second-order moment formula for affine lattices. We refer to \cite{A}, \cite{EMV}, and \cite{GH} for the proof of the following proposition.
\begin{proposition}\label{prop:Siegel_affine}
    For $f \in L^1(\mathbb R^l) \cap L^2(\mathbb R^l)$, we have the following:
    \begin{equation}\label{equ:22}
        \int_{X_a} \hat{f}(\Lambda) \,\, d\mu_a(\Lambda)=\int_{\mathbb R^l} f(\mathbf x) \,\, d\mathbf x
    \end{equation}
\begin{equation}\label{equ:23}
 \int_{X_a} {\hat{f}(\Lambda)}^2 \,\, d\mu_a(\Lambda)= \left(\int_{\mathbb R^l} f(\mathbf x) \,\, d\mathbf x \right)^2 + \int_{\mathbb R^l} {f(\mathbf x)}^2 \,\, d\mathbf x,
\end{equation}
where $d\mathbf x $ is the usual Lebesgue measure on $\mathbb R^l$  
\end{proposition}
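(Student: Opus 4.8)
The plan is to reduce both identities to an unfolding computation on the torus fibers of the measure-preserving projection $\tilde\pi : X_a \to X_{l}$ recorded in Section~\ref{section:intro}, combined with the classical Siegel mean value theorem on the space $X_{l}$ of \emph{linear} unimodular lattices. Disintegrating $\mu_a$ along $\tilde\pi$, the fiber over a point $g\mathbb Z^{l}\in X_{l}$ is the torus $\mathbb R^{l}/g\mathbb Z^{l}$ equipped with its Haar probability measure $d\mathbf v$ (a probability measure since $\det g = 1$), and a point of that fiber is the affine lattice $g\mathbb Z^{l}+\mathbf v$. Observe first that $0\in g\mathbb Z^{l}+\mathbf v$ only for $\mathbf v$ in a set of measure zero, so the restriction $\lambda\neq 0$ in \eqref{equ:21} is immaterial for all the integrals below.

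For the first moment \eqref{equ:22} I would unfold on each fiber: for fixed $g$,
\begin{equation*}
\int_{\mathbb R^{l}/g\mathbb Z^{l}} \sum_{k\in\mathbb Z^{l}} f(gk+\mathbf v)\, d\mathbf v = \int_{\mathbb R^{l}} f(\mathbf w)\, d\mathbf w,
\end{equation*}
because $\mathbf w=gk+\mathbf v$ ranges over $\mathbb R^{l}$ exactly once as $(k,\mathbf v)$ runs over $\mathbb Z^{l}\times(\mathbb R^{l}/g\mathbb Z^{l})$. The right-hand side is independent of $g$, so integrating over $X_{l}$ against the probability measure $\mu_{l}$ yields \eqref{equ:22}. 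Notably this step does not require Siegel's theorem at all; the translation average alone already recovers the Lebesgue integral, and the same computation proves \eqref{equ:22} for any $f\in L^{1}(\mathbb R^{l})$.

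For the second moment I would expand $\hat f(\Lambda)^2 = \sum_{\lambda_1,\lambda_2\in\Lambda} f(\lambda_1)f(\lambda_2)$ and split into the diagonal $\lambda_1=\lambda_2$ and the off-diagonal $\lambda_1\neq\lambda_2$. The diagonal contributes $\sum_{\lambda} f(\lambda)^2 = \widehat{f^2}(\Lambda)$, whose integral is $\int_{\mathbb R^{l}} f^2$ by the first moment formula applied to $f^2\in L^{1}(\mathbb R^{l})$ (here $f\in L^2$ is exactly what guarantees $f^2\in L^1$). For the off-diagonal, reindexing pairs $(k_1,k_2)$ by $j=k_2-k_1\in\mathbb Z^{l}\setminus\{0\}$ and unfolding on the fiber as above turns the fiber integral into $\sum_{j\neq 0} F(gj)$, where
\begin{equation*}
F(\mathbf x):=\int_{\mathbb R^{l}} f(\mathbf w)\, f(\mathbf w+\mathbf x)\, d\mathbf w
\end{equation*}
is the autocorrelation of $f$. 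Hence the off-diagonal integral equals $\int_{X_{l}} \hat F(g\mathbb Z^{l})\, d\mu_{l}$, and Siegel's mean value theorem on $X_{l}$ together with Fubini gives $\int_{\mathbb R^{l}} F = \left(\int_{\mathbb R^{l}} f\right)^2$. Adding the two contributions yields \eqref{equ:23}.

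The main obstacle is analytic rather than structural: justifying the interchanges of summation and integration in the unfolding steps, and applying Siegel's formula to $F$, which lies in $L^{1}(\mathbb R^{l})$ but need not be compactly supported. I would resolve this by first proving both identities for nonnegative $f\in C_c(\mathbb R^{l})$, where Tonelli's theorem makes every rearrangement automatic and $F\in C_c(\mathbb R^{l})$, and then removing the restrictions by a monotone and dominated approximation argument using $f\in L^1\cap L^2$; Cauchy--Schwarz and Fubini guarantee $F\in L^1(\mathbb R^{l})$ with $\int F=\left(\int f\right)^2$ absolutely convergent. The admissibility of $L^1$ functions in Siegel's theorem on $X_{l}$ is the single external input I would invoke with care.
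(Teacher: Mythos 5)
Your proof is correct. Note that the paper itself does not prove this proposition; it simply cites \cite{A}, \cite{EMV}, and \cite{GH} for it. Your argument --- disintegrating $\mu_a$ over the measure-preserving projection $\tilde\pi:X_a\to X_l$, unfolding the sum against the Haar probability measure on each torus fiber $\mathbb R^l/g\mathbb Z^l$, and handling the off-diagonal part of the second moment by reducing it to the classical Siegel mean value theorem applied to the autocorrelation $F$ of $f$ --- is precisely the standard proof given in those references, and your care about the measure-zero set where $0\in\Lambda$, the integrability of $F$ via Cauchy--Schwarz/Fubini, and the extension of Siegel's formula to $L^1$ functions addresses the only genuine analytic points.
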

Siegel transform of bounded functions are typically unbounded, and the following function has a close relationship with the growth of the Siegel transform.
\begin{definition}\label{defn:alpha}
    Let $\Lambda$ be any lattice of $\mathbb R^l$. The covolume of $\Lambda$ is defined to be the volume of $\mathbb R^l/\Lambda$ and we denote it by $d(\Lambda)$. We define
    \begin{equation}\label{equ:alpha_s}
\alpha_l(\Lambda):=\sup_{V} \left\{d(V \cap \Lambda)^{-1}: V\cap \Lambda \,\, \text{is a lattice in} \,\, V \right\},
\end{equation}
where the supremum is taken over all non-zero subspaces $V$ of $\mathbb R^l.$
\end{definition}
An immediate consequence of Mahler's compactness criterion shows that $\alpha_l$ is a proper map on $X_l.$  We define $\alpha_{l,a} : X_{a} \rightarrow \mathbb R$ by
\begin{equation}\label{equ:alpha_affine}
     \alpha_{l,a}(\Lambda)= \alpha_{l+1}(\Tilde{h}(\Lambda))=\alpha_l(\Tilde{\pi}(\Lambda)),
\end{equation}
where $\alpha_l$ is given by \eqref{equ:alpha_s} and $\tilde{h}$, $\tilde{\pi}$ are defined in Subsection \ref{subsec:affine}. A routine verification shows that the above definition indeed makes sense. Throughout, given $A,B \in \mathbb R$, $A \ll_{z} B$ means that there exists $c>0$ (depending only on $z$) such that $A \leq cB.$ Let us recall the following propositions, which will be useful later.

 
\begin{proposition}{\cite[Proposition 3.14]{AG}}\label{prop:Siegel_alpha_bound_affine}
    Let $g : \mathbb R^l \rightarrow \mathbb R$ be a bounded function with compact support. Then
    \begin{equation*}
        |\hat{g}(\Lambda)| \ll_{\supp (g)} \|g\|_{\infty} \cdot \alpha_{l,a}(\Lambda) \quad \forall \Lambda \in X_{a},
    \end{equation*}
    where $\supp (g)$ denotes the support of $g.$
\end{proposition}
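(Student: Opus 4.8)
The plan is to deduce this affine estimate from the classical counting bound for genuine (linear) unimodular lattices, exploiting the identity $\alpha_{l,a}(\Lambda)=\alpha_l(\tilde\pi(\Lambda))$ recorded in \eqref{equ:alpha_affine}. The essential observation is that the Siegel transform of a bounded compactly supported function is controlled simply by the number of lattice points of $\Lambda$ that land in $\supp(g)$, and that this count is in turn governed by the densest sublattice of the linear part $\tilde\pi(\Lambda)$ of $\Lambda$ — which is exactly what $\alpha_{l,a}$ measures. So the whole statement reduces to a point-counting estimate together with the bookkeeping that relates an affine count to a linear one.

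First I would write the trivial pointwise bound
$$|\hat g(\Lambda)|\le\sum_{\lambda\in\Lambda\cap\supp(g)}|g(\lambda)|\le\|g\|_\infty\,|\Lambda\cap\supp(g)|.$$
Since $g$ has compact support, I fix $R=R(\supp g)>0$ with $\supp(g)\subseteq B_R:=\{\mathbf x\in\mathbb R^l:\|\mathbf x\|\le R\}$, so it suffices to bound $|\Lambda\cap B_R|$. Writing $\Lambda=L+\mathbf v$, where $L:=\tilde\pi(\Lambda)\in X_l$ is the underlying unimodular lattice and $\mathbf v\in\mathbb R^l$ a translation vector, I remove the translation by a base-point argument: if $\Lambda\cap B_R\neq\varnothing$, fix $\lambda_0\in\Lambda\cap B_R$; then $\lambda\mapsto\lambda-\lambda_0$ injects $\Lambda\cap B_R$ into $L\cap B_{2R}$, so $|\Lambda\cap B_R|\le|L\cap B_{2R}|$. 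Thus the affine count is dominated by a linear count, and by \eqref{equ:alpha_affine} it remains only to show $|L\cap B_{2R}|\ll_{R,l}\alpha_l(L)=\alpha_{l,a}(\Lambda)$.

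The heart of the matter, and the step I expect to be the main obstacle, is this classical counting estimate for a unimodular lattice $L\subset\mathbb R^l$. I would prove it via reduction theory. Let $\lambda_1\le\cdots\le\lambda_l$ be the successive minima of $L$ with respect to the unit ball and let $k$ be the largest index with $\lambda_k\le 2R$ (if there is none, then $L\cap B_{2R}$ is at most $\{0\}$ and the bound is immediate, since $\alpha_l(L)\ge1$). The standard lattice-point counting lemma gives
$$|L\cap B_{2R}|\ll_l\prod_{i=1}^{l}\Bigl(1+\tfrac{2R}{\lambda_i}\Bigr)\ll_l\prod_{i=1}^{k}\tfrac{2R}{\lambda_i}\le(2R)^k\prod_{i=1}^{k}\lambda_i^{-1}.$$
Let $v_1,\dots,v_k$ realize $\lambda_1,\dots,\lambda_k$, let $V=\mathrm{span}(v_1,\dots,v_k)$, and let $W=V\cap L$ be the corresponding primitive rank-$k$ sublattice. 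By Hadamard's inequality the lattice they generate has covolume $\le\prod_{i=1}^k\lambda_i$, hence $d(W)\le\prod_{i=1}^k\lambda_i$, so $\prod_{i=1}^k\lambda_i^{-1}\le d(W)^{-1}\le\alpha_l(L)$, the last inequality because $d(V\cap L)^{-1}$ is one of the quantities in the supremum defining $\alpha_l(L)$ in \eqref{equ:alpha_s}. Absorbing $(2R)^k\le(2R)^l$ into the implied constant yields $|L\cap B_{2R}|\ll_{R,l}\alpha_l(L)$, and combining the three steps gives $|\hat g(\Lambda)|\ll_{\supp g}\|g\|_\infty\,\alpha_{l,a}(\Lambda)$.

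As an alternative to the base-point reduction, one could argue by lifting: setting $\tilde g(\mathbf x,t)=g(\mathbf x)\phi(t)$ with $\phi\in C_c((1/2,3/2))$ and $\phi(1)=1$, one checks that $\hat{\tilde g}(\tilde h(\Lambda))=\hat g(\Lambda)$, since the height-$1$ slice of $\tilde h(\Lambda)\subset\mathbb R^{l+1}$ reproduces $\Lambda$ (the remaining integer heights being killed by $\phi$, up to the single harmless term $g(0)$). One then applies the same linear counting bound in $\mathbb R^{l+1}$ together with $\alpha_{l,a}(\Lambda)=\alpha_{l+1}(\tilde h(\Lambda))$. Either route isolates the single nontrivial ingredient, namely the reduction-theoretic counting estimate above.
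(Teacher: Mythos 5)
The paper does not prove this proposition at all: it is imported verbatim as a citation to \cite[Proposition 3.14]{AG}, so there is no in-paper argument to compare against. Your proof is correct and self-contained, and it is essentially the standard argument behind such bounds (cf.\ the counting lemmas of Schmidt and Eskin--Margulis--Mozes): the trivial bound $|\hat g(\Lambda)|\le\|g\|_\infty|\Lambda\cap\supp g|$, the base-point trick $|\Lambda\cap B_R|\le|L\cap B_{2R}|$ with $L=\tilde\pi(\Lambda)$, the successive-minima estimate $|L\cap B_{2R}|\ll_l\prod_i(1+2R/\lambda_i)$, and the Hadamard step $\prod_{i\le k}\lambda_i^{-1}\le d(V\cap L)^{-1}\le\alpha_l(L)$ all check out, including the degenerate cases ($\Lambda\cap B_R=\varnothing$, $k=0$, where $\alpha_l(L)\ge 1$ from $V=\mathbb R^l$ saves you). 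Your alternative route via the lift $\tilde h$ into $X_{l+1}$ and the function $\tilde g(\mathbf x,t)=g(\mathbf x)\phi(t)$ is in fact closer in spirit to how the cited source handles the affine case (it reduces to the known linear bound $|\hat{\tilde g}|\ll\|\tilde g\|_\infty\alpha_{l+1}$ and then uses $\alpha_{l,a}(\Lambda)=\alpha_{l+1}(\tilde h(\Lambda))$ from \eqref{equ:alpha_affine}), whereas your primary route proves the linear counting estimate from scratch; the former buys brevity by quoting a classical result, the latter buys a fully elementary, self-contained proof. Either is acceptable here.
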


\begin{proposition}{\cite[Proposition 3.17]{AG}}\label{prop:alpha_Lp_affine}
     Let $\alpha_{l,a}$ be the function defined on $X_{a}$ by \eqref{equ:alpha_affine}. Then $\alpha_{l,a} \in L^p(X_{a},\mu_a)$ for $1 \leq p < l$ and for any $L>0,$
    \begin{equation}\label{equ:alpha_Lp_bound_affine}
\mu_a (\{\alpha_{l,a}  \geq L\}) \ll_p L^{-p} \quad \forall \,\, p <l.
\end{equation}
\end{proposition}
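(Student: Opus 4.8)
The plan is to reduce the statement on the space of affine lattices to the corresponding classical estimate on the space $X_l$ of linear unimodular lattices, by exploiting the factorization of $\alpha_{l,a}$ through the projection $\tilde{\pi}$. Indeed, the second equality in \eqref{equ:alpha_affine} gives $\alpha_{l,a} = \alpha_l \circ \tilde{\pi}$, where $\tilde{\pi} : X_a \to X_l$ is the surjection from Subsection \ref{subsec:affine}, which is measure preserving. Consequently, for every $L > 0$,
\begin{equation*}
\mu_a(\{\alpha_{l,a} \ge L\}) = \mu_a\bigl(\tilde{\pi}^{-1}\{\alpha_l \ge L\}\bigr) = \mu_l(\{\alpha_l \ge L\}),
\end{equation*}
so it suffices to establish the tail bound $\mu_l(\{\alpha_l \ge L\}) \ll_p L^{-p}$ for the function $\alpha_l$ on $X_l$ and every $p < l$; both the $L^p$-integrability and the affine tail bound follow at once.

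To prove the tail bound on $X_l$, I would argue as follows. If $\alpha_l(\Lambda) \ge L$, then by \eqref{equ:alpha_s} there is a nonzero subspace $V$, of some dimension $1 \le j \le l$, with $\Delta := V \cap \Lambda$ a lattice in $V$ of covolume $d(\Delta) \le 1/L$. Splitting according to the rank $j$ of this extremal primitive sublattice, it is enough to bound, for each fixed $j$, the measure of the set of lattices admitting a primitive rank-$j$ sublattice of covolume at most $\delta := 1/L$. For $j = 1$ this is the set of lattices possessing a nonzero vector of norm $\le \delta$; by Siegel's mean value theorem the expected number of such vectors is comparable to $\vol(B_\delta) \asymp \delta^l$, so Markov's inequality yields a bound $\ll \delta^l$. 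For $2 \le j \le l-1$, I would pass to the $j$-th exterior power: a primitive rank-$j$ sublattice of covolume $\le \delta$ corresponds to a primitive decomposable vector of norm $\le \delta$ in the lattice $\wedge^j \Lambda \subset \wedge^j \mathbb R^l$, and a mean-value/counting estimate for such sublattices produces a bound $\ll_p \delta^p$ for every $p < l$. Summing over the finitely many ranks and substituting $\delta = 1/L$ gives $\mu_l(\{\alpha_l \ge L\}) \ll_p L^{-p}$.

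The membership $\alpha_{l,a} \in L^p(X_a)$ for $1 \le p < l$ then follows from the tail bound by the layer-cake formula: fixing $p'$ with $p < p' < l$,
\begin{equation*}
\int_{X_a} \alpha_{l,a}^p \, d\mu_a = p \int_0^\infty L^{p-1}\, \mu_a(\{\alpha_{l,a} > L\}) \, dL \ll_{p'} 1 + \int_1^\infty L^{p-1-p'} \, dL < \infty,
\end{equation*}
since $p - 1 - p' < -1$.

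I expect the main obstacle to be the higher-rank counting step, namely controlling the expected number of primitive rank-$j$ sublattices of small covolume for $2 \le j \le l-1$. The rank-one case is an immediate consequence of Siegel's formula, but for intermediate ranks one needs either a Rogers-type mean value formula on the space of sublattices or a careful analysis of short decomposable vectors in $\wedge^j \Lambda$ (as in the work of Eskin, Margulis and Mozes); this is precisely where the sharp integrability threshold $p < l$ is forced and where the genuine work lies. The reduction to the linear case, by contrast, is painless once one has the identity $\alpha_{l,a} = \alpha_l \circ \tilde{\pi}$ together with the measure-preservation of $\tilde{\pi}$.
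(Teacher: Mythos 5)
The paper does not actually prove this proposition: it is quoted directly from \cite[Proposition 3.17]{AG}, so there is no internal argument to compare yours against. That said, your opening reduction is exactly the right (and the standard) move: \eqref{equ:alpha_affine} gives $\alpha_{l,a}=\alpha_l\circ\tilde{\pi}$, and since $\tilde{\pi}$ is measure preserving the affine statement is equivalent to the tail bound for $\alpha_l$ on $X_l$, which is a classical fact going back to Schmidt and used by Eskin--Margulis--Mozes and Bj\"{o}rklund--Gorodnik. Your layer-cake deduction of the $L^p$-membership from the tail bound is also correct.

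The one genuine gap is the step you yourself flag: for $2\le j\le l-1$ you assert, but do not establish, that the measure of the set of $\Lambda\in X_l$ admitting a primitive rank-$j$ sublattice of covolume at most $\delta$ is $\ll_p\delta^p$. Note that the most tempting shortcut --- applying Siegel's mean value theorem to short vectors of $\wedge^j\Lambda$ inside $\wedge^j\mathbb R^l$ --- does not work as stated, because the lattices $\wedge^j\Lambda$ form a measure-zero family in the space of unimodular lattices of $\mathbb R^{\binom{l}{j}}$ and one must in any case restrict attention to decomposable vectors. What closes the gap is the mean value formula for counting primitive rank-$j$ sublattices of determinant at most $\delta$ (Schmidt \cite{S2}, already in the bibliography, or equivalently the Siegel-type integral formula for the $\SL_l(\mathbb R)$-orbit of $e_1\wedge\dots\wedge e_j$): the expected number of such sublattices is $\ll_j\delta^l$, so Markov's inequality gives a tail $\ll\delta^l\le\delta^p$ for every $p\le l$, uniformly over the finitely many ranks $j$. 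With that citation supplied your argument is complete and, as far as the reduction to the linear case is concerned, coincides with the route taken in \cite{AG}.
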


Now we reformulate our problem using the Siegel transform. Let $\chi_{\Omega_T}$ denotes the indicator function of $\Omega_T.$ For $T=2^M$ and $\Lambda \in X_a$, we have
\begin{equation}\label{equ:24}
|\Lambda \cap \Omega_{2^M}|=\hat{\chi}_{\Omega_{2^M}}(\Lambda)  \quad \text{and } \quad \vol(\Omega_{2^M})=\int_{X_a} \hat{\chi}_{\Omega_{2^M}}(\Lambda) \,\, d\mu_a(\Lambda)
\end{equation}
Note that our domain $\Omega_{2^M}$ can be tessellated by the action of a diagonal element of $\SL_l(\mathbb R).$ Let
\begin{equation}\label{equ:25}
    {c}_0=\diag (2^{u_1},\dots,2^{u_m},2^{-1},\dots,2^{-1})
\end{equation}
and 
\begin{equation}
     c=(\diag (2^{u_1},\dots,2^{u_m},2^{-1},\dots,2^{-1}),0) \in \SL_l(\mathbb R)  \ltimes \mathbb R^n.
\end{equation}
Then
\begin{equation}\label{equ:26}
\Omega_{2^M}=\bigsqcup_{k=0}^{M-1} {c}_0^{-k} \Omega_2.
\end{equation}
Hence
\begin{equation}\label{equ:27}
|\Lambda  \cap \Omega_{2^M} |=\sum_{k=0}^{M-1} \hat{\chi}_{\Omega_2}(c^k \Lambda)
\end{equation}
and 
\begin{equation}\label{equ:28}
\vol(\Omega_{2^M})=\int_{X_a} \sum_{k=0}^{M-1} \hat{\chi}_{\Omega_2}(c^k \Lambda) \,\, d\mu_a(\Lambda)=\sum_{k=0}^{M-1} \vol(\Omega_2)=M \cdot \vol(\Omega_2 )
\end{equation}
From now on we will denote $\chi_{\Omega_2}$ by $\chi$. From the above discussion, it is clear that the following Theorem is equivalent to Theorem \ref{thm:main_affine}. 
\begin{theorem}\label{thm:affine_reduc}
Under the hypothesis of Theorem \ref{thm:main_affine},
       
\begin{equation}\label{equ:29}
H_M:=\frac{1}{\sqrt{M}} \left( \sum_{k=0}^{M-1}  \hat{\chi} \circ c^k - M \cdot \vol(\Omega_2)\right) \longrightarrow N(0,\eta^2) \,\, \text{in distribution as} \,\, M \rightarrow \infty,
\end{equation}
    where $\eta=\vol(\Omega_2)^{1/2}$ and $N(0,\eta^2)$ denotes the normal distribution with mean $0$ and variance $\eta^2.$
\end{theorem}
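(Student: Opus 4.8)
The plan is to establish \eqref{equ:29} by the method of cumulants, applying the Fr\'{e}chet--Shohat criterion (Theorem \ref{thm:CLT_crit}): it suffices to show that $\E[H_M] \to 0$, that $\E[H_M^2] \to \eta^2$, and that every cumulant of order $r \geq 3$ tends to $0$ as $M \to \infty$. Since $\hat{\chi}$ is unbounded, the criterion cannot be applied to $H_M$ directly. Following the outline, I would first sandwich $\chi=\chi_{\Omega_2}$ between smooth compactly supported functions $\psi_\epsilon^{\pm}\in C_c^\infty(\R^l)$ and then truncate the corresponding Siegel transforms to bounded functions; the resulting smoothed-and-truncated cocycle sums $H_M^{\epsilon,L}$ are the objects to which the CLT criterion is genuinely applied (Proposition \ref{prop:H_epsilon_L}). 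The errors incurred are controlled in $L^2(\mu_a)$ uniformly in $M$ by means of the tail estimate \eqref{equ:alpha_Lp_bound_affine} for $\alpha_{l,a}$ together with the pointwise bound of Proposition \ref{prop:Siegel_alpha_bound_affine}, so that $H_M-H_M^{\epsilon,L}$ is small in probability uniformly in $M$; a routine approximation argument then transfers the CLT from $H_M^{\epsilon,L}$ to $H_M$ once one checks that the limiting variance $\eta_{\epsilon,L}^2 \to \eta^2$.

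The second cumulant is the cleanest step, and in the affine setting it is in fact \emph{exact}. Writing $\chi_k:=\chi\circ {c}_0^{\,k}$ and noting that $c$ acts on $\R^l$ by the linear map ${c}_0$ (its translation part being trivial), we have $\hat{\chi}\circ c^k=\widehat{\chi_k}$, and polarizing the second-moment formula \eqref{equ:23} yields $\mathrm{Cov}(\widehat{\chi_j},\widehat{\chi_k})=\int_{\R^l}\chi_j\chi_k\,d\mathbf{x}$. Since $\chi_j$ is the indicator of ${c}_0^{-j}\Omega_2$ and the tiles $\{{c}_0^{-k}\Omega_2\}$ are pairwise disjoint by the tessellation \eqref{equ:26}, the off-diagonal covariances vanish identically, while the diagonal terms equal $\int_{\R^l}\chi\,d\mathbf{x}=\vol(\Omega_2)$. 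Hence $\E[H_M^2]=\vol(\Omega_2)=\eta^2$ for every $M$, with no limit required. For the actual proof this computation is carried out for the bounded approximants, where the same disjointness makes the cross terms negligible.

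The heart of the argument is the vanishing of the higher cumulants. Expanding by multilinearity,
\begin{equation*}
\mathrm{Cum}_r(H_M^{\epsilon,L}) = M^{-r/2}\sum_{k_1,\dots,k_r=0}^{M-1}\mathrm{Cum}\big(\hat{\chi}^{\epsilon,L}\circ c^{k_1},\dots,\hat{\chi}^{\epsilon,L}\circ c^{k_r}\big),
\end{equation*}
and the task is to show this is $o(1)$ for $r\geq 3$. A joint cumulant is a \emph{connected} correlation, so it cannot factor across a gap in the configuration $(k_1,\dots,k_r)$; the quantitative multiple mixing estimate of \cite[Theorem 1.1]{BEG}, applied to the smooth bounded functions $\hat{\chi}^{\epsilon,L}$, then forces exponential decay of each joint cumulant in the successive gaps between the ordered indices. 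Feeding this gap-decay into the combinatorial summation lemma of Bj\"orklund and Gorodnik \cite[Proposition 6.2]{BG1'} bounds the multi-sum by $O_r(M)$, whence $\mathrm{Cum}_r(H_M^{\epsilon,L}) = O_r(M^{1-r/2}) \to 0$ for all $r\geq 3$.

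I expect the main obstacle to be exactly this last step: extracting genuine exponential gap-decay for the connected correlations from the multiple mixing estimate in a form compatible with \cite[Proposition 6.2]{BG1'}. This requires that the smoothed truncations $\hat{\chi}^{\epsilon,L}$ carry controlled Sobolev norms, so that the mixing rate is usable, and that the truncation level $L$ and smoothing scale $\epsilon$ be coupled to $M$ carefully, so that the mixing error, the approximation error, and the combinatorial bound can be balanced simultaneously. Once the variance (exact, as above) and the decay of all higher cumulants are in hand, the Fr\'{e}chet--Shohat theorem delivers $H_M \to N(0,\eta^2)$ in distribution, completing the proof.
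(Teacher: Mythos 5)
Your overall strategy is the paper's: reduce to the smoothed, truncated sums $H_M^{(\varepsilon,L)}$, verify the Fr\'{e}chet--Shohat conditions for those, and transfer back. Your variance computation is also exactly what the paper does (polarize Rogers' second moment formula for affine lattices, observe that the tessellation \eqref{equ:26} kills all off-diagonal covariances, and conclude $\lim_M\|H_M\|_2^2=\vol(\Omega_2)$), and your identification of the higher-cumulant step as the crux, handled by \cite[Theorem 1.1]{BEG} plus the combinatorial partition of the index set from \cite[Proposition 6.2]{BG1'}, matches the paper.

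There is, however, one genuine gap in the cumulant step as you state it. You claim the combinatorial lemma bounds the multi-sum by $O_r(M)$ and hence $\mathrm{Cum}_{[r]}(H_M^{(\varepsilon,L)})=O_r(M^{1-r/2})\to 0$ for all $r\geq 3$. This would be correct if the functions $\hat{f_\varepsilon}^{(L)}\circ c^k$ were bounded uniformly in $M$, but they are not: to make $\|H_M-H_M^{(\varepsilon,L)}\|_2\to 0$ you must let $L=L(M)\to\infty$ (the paper needs $M=o(L^{l-3})$), and $\|\hat{f_\varepsilon}^{(L)}\|_\infty\ll L$. Consequently the near-diagonal region $\Omega(\beta_r,M)$ contributes not $O_r(M^{1-r/2})$ but
\begin{equation*}
\ll_r M^{1-r/2}\,\beta_r^{\,r-1}\,L^{(r-l)^+},
\end{equation*}
and for $r>l$ the factor $L^{(r-l)^+}$ competes against $M^{1-r/2}$. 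The proof only closes if one can choose $L=M^d$ with $d<\tfrac{r-2}{2(r-l)}$ \emph{and} $d>\tfrac{1}{l-3}$ simultaneously, which is possible precisely because $l=m+n\geq 5$ (the condition reduces to $r(l-5)+6>0$). Your proposal never locates where the hypothesis $m+n\geq 5$ is used, and the clean $O_r(M^{1-r/2})$ bound you assert skips exactly the balancing act that makes the dimension restriction necessary. The same coupling of $\varepsilon$, $L$, and the cut-off scales $\beta_j\sim\log M$ is also needed to beat the $\varepsilon^{-qr}L^rM^{r/2}$ prefactor multiplying the exponential mixing error, so this is not a cosmetic omission but the quantitative heart of the argument.
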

We prove Theorem \ref{thm:affine_reduc} using the CLT criteria of Fr\'{e}chet and Shohat. Let us first recall the notion of a \emph{cumulant}.

\begin{definition}\label{def:cumu}
    Let $(Y, \nu)$ be a probability space. For $\psi_1,\dots,\psi_r \in L^{\infty}(Y),$ their \emph{joint cumulant} of order $r$ is defined by
\begin{equation*}
\text{Cum}_{[r]}(\psi_1,\dots,\psi_r)=\sum_{\mathcal P}(-1)^{ |\mathcal P| -1} \left( |\mathcal P|-1 \right)! \prod_{I \in \mathcal P} \int_{Y} \prod_{i \in I} \psi_i \,\, d\nu,
\end{equation*}
where $\mathcal P$ is the set of all partition of the set $\{1,\dots,r\}.$
\end{definition}
 For a partition $\mathcal Q$ of $\{1,\dots,r\}$, the  \emph{conditional joint cumulant} of $\psi_1,\dots,\psi_r \in L^{\infty}(Y)$ with respect to $\mathcal Q$ is defined as
 \begin{equation*}
    \text{Cum}_{[r]}(\psi_1,\dots,\psi_r| \mathcal Q)=\sum_{\mathcal P}(-1)^{ |\mathcal P| -1} \left(|\mathcal P|-1 \right)! \prod_{I \in \mathcal P} \prod_{J \in \mathcal Q} \int_{Y} \prod_{i \in I \cap J} \psi_i \,\, d\nu.
 \end{equation*}
 For $\psi \in L^{\infty}(Y),$ we define 
 \begin{equation*}
     \text{Cum}_{[r]}(\psi):= \text{Cum}_{[r]}(\psi,\dots,\psi).
 \end{equation*}
 Note that $\text{Cum}_{[r]}$ is  multi linear in the functions $\psi_1,\dots,\psi_r.$ We now state the CLT criteria of Fr\'{e}chet and Shohat from \cite{FS}.
 \begin{theorem}\label{thm:CLT_crit}
     Let $(Y,\nu)$ be a probability space and $\{\Psi_M\}$ be a sequence of real-valued measurable bounded functions on $Y$ satisfying
     \begin{equation}\label{equ:FS_1}
         \int_Y \Psi_M \,\, d\nu =0 \quad \text{and} \quad \sigma^2 :=\lim_{M \rightarrow \infty} \int_{Y} \Psi_M^2 < \infty
         \end{equation}
         and 
 \begin{equation}\label{equ:FS_2}
     \lim_{M \rightarrow \infty} \text{Cum}_{[r]} (\Psi_M)=0 \quad \text{for all } \,\, r\geq 3.
 \end{equation}
 Then for every $\xi \in \mathbb R$
 \begin{equation*}
     \nu(\{\Psi_M < \xi\}) \rightarrow {N}_{\xi}(0,\sigma^2) \quad \text{as} \quad M \rightarrow \infty.
 \end{equation*}
 \end{theorem}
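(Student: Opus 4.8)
The plan is to prove this by the classical method of moments, in three stages: first invert the cumulant-to-moment relation so as to express the moments of $\Psi_M$ through its joint cumulants; then pass to the limit $M\to\infty$ using the hypotheses \eqref{equ:FS_1}--\eqref{equ:FS_2} and recognize the limiting moments as the Gaussian ones; and finally invoke determinacy of the Gaussian moment problem to upgrade moment convergence to convergence in distribution.

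For the first stage, write $c_j^{(M)} := \text{Cum}_{[j]}(\Psi_M)$ and $m_r^{(M)} := \int_Y \Psi_M^r \, d\nu$. The defining sum in Definition~\ref{def:cumu} is a Möbius inversion over the partition lattice, and its dual (proved by induction on $r$, or via the exponential relation between the moment and cumulant generating series) reads, for every fixed $r$,
\[
m_r^{(M)} \;=\; \sum_{\mathcal{P}} \;\prod_{I \in \mathcal{P}} c_{|I|}^{(M)},
\]
the sum running over all partitions $\mathcal{P}$ of $\{1,\dots,r\}$. Here I use that $\text{Cum}_{[r]}$ is multilinear, so setting all arguments equal to $\Psi_M$ turns the joint cumulant of a block $I$ into $c_{|I|}^{(M)}$ and the joint moment of $\{1,\dots,r\}$ into $m_r^{(M)}$.

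For the second stage I feed in the hypotheses. Since $\int_Y \Psi_M \, d\nu = 0$ we have $c_1^{(M)} = 0$; as the mean vanishes, $c_2^{(M)} = \int_Y \Psi_M^2 \, d\nu \to \sigma^2$; and by \eqref{equ:FS_2}, $c_j^{(M)} \to 0$ for every $j \geq 3$. For each fixed $r$ the displayed sum is finite, so I may pass to the limit term by term. Only partitions all of whose blocks have size exactly $2$ survive, since blocks of size $1$ are annihilated by $c_1 = 0$ and blocks of size $\geq 3$ by $c_j \to 0$. Hence $m_r^{(M)} \to 0$ for $r$ odd, while for $r = 2s$ the surviving partitions are the perfect matchings of $\{1,\dots,2s\}$, of which there are $(2s-1)!!$, each contributing $(\sigma^2)^s$. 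Therefore
\[
\lim_{M\to\infty} m_r^{(M)} \;=\;
\begin{cases} 0 & r \text{ odd}, \\[2pt] (2s-1)!!\,\sigma^{2s} & r = 2s, \end{cases}
\]
which are precisely the moments of $N(0,\sigma^2)$.

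The final and most delicate stage is to deduce convergence in distribution from convergence of all moments, and this is where I expect the real obstacle to lie: moment convergence alone is not enough in general, and one must exploit that the Gaussian is \emph{determined} by its moments (Carleman's condition holds, as the even moments grow only like $(2s-1)!!\,\sigma^{2s}$). Concretely I would note that the laws of $\Psi_M$ are tight, since their second moments are bounded; along any subsequence extract a weak limit $\mu_\infty$; use uniform integrability coming from the bounded moments to pass limits under the integral and conclude that every moment of $\mu_\infty$ equals the corresponding Gaussian moment; and then invoke determinacy to force $\mu_\infty = N(0,\sigma^2)$. As every subsequential limit is the same, the full sequence converges weakly, and because the limiting distribution function $\xi \mapsto N_\xi(0,\sigma^2)$ is continuous everywhere, weak convergence yields the pointwise statement $\nu(\{\Psi_M < \xi\}) \to N_\xi(0,\sigma^2)$ at every $\xi \in \mathbb{R}$.
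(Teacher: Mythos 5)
The paper does not actually prove this statement: it is quoted as the classical criterion of Fr\'{e}chet and Shohat and attributed to \cite{FS}, so there is no internal proof to compare against. Your argument is a correct and complete proof along the standard lines of that classical result: the moment--cumulant inversion $m_r^{(M)}=\sum_{\mathcal P}\prod_{I\in\mathcal P}c_{|I|}^{(M)}$ (valid here because $\text{Cum}_{[r]}$ is defined by M\"obius inversion over the partition lattice and all arguments are set equal to $\Psi_M$); termwise passage to the limit, legitimate since for each fixed $r$ the sum over partitions is finite and each factor converges by hypothesis ($c_1^{(M)}=0$, $c_2^{(M)}\to\sigma^2$, $c_j^{(M)}\to 0$ for $j\ge 3$); identification of the limiting moments with the Gaussian values $(2s-1)!!\,\sigma^{2s}$ via the count of perfect matchings; and finally the method of moments, where you correctly isolate the genuinely delicate point, namely that moment convergence alone does not imply weak convergence and one must use tightness, uniform integrability of $\Psi_M^r$ coming from boundedness of the higher moments, and Carleman determinacy of the Gaussian to force all subsequential weak limits to coincide. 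The concluding remark that the Gaussian distribution function is continuous everywhere, so that weak convergence yields $\nu(\{\Psi_M<\xi\})\to N_\xi(0,\sigma^2)$ for every $\xi$, closes the argument. In short, your write-up supplies precisely the proof that the paper delegates to the reference.
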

  
\begin{proposition}\label{prop:condi_cumu}
    Let $\psi_1,\dots,\psi_r \in L^{\infty}(Y,\mu)$ and $\mathcal Q$ be any partition of $\{1,\dots,r\}$ with $ |\mathcal Q| \geq 2.$ Then we have
    \begin{equation*}
        \text{Cum}_{[r]}(\psi_1,\dots,\psi_r | \mathcal Q)=0.
    \end{equation*}
\end{proposition}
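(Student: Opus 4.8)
The plan is to recognize the \emph{conditional} cumulant as an \emph{ordinary} cumulant functional applied to a modified moment function, and then exploit a multiplicative factorization of that moment function across the blocks of $\mathcal Q$ by means of a generating-function identity. The whole point is that conditioning on $\mathcal Q$ replaces the genuine joint moments by ones that factor as if the blocks of $\mathcal Q$ were independent, and cumulants of variables that split into two independent groups must vanish.

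First I would set, for each $A \subseteq \{1,\dots,r\}$,
$$\tilde{m}(A) := \prod_{J \in \mathcal Q} \int_Y \prod_{i \in A \cap J} \psi_i \,\, d\nu,$$
so that, by definition, $\text{Cum}_{[r]}(\psi_1,\dots,\psi_r \mid \mathcal Q) = \sum_{\mathcal P} (-1)^{|\mathcal P|-1} (|\mathcal P|-1)!\, \prod_{I \in \mathcal P} \tilde{m}(I)$. The key elementary observation is that $\tilde{m}$ factorizes over $\mathcal Q$. Indeed, since $\mathcal Q$ is a partition, for a fixed block $J_0$ one has $A \cap J_0 \cap J = \emptyset$ whenever $J \neq J_0$, so the corresponding factors equal $\int_Y 1 \, d\nu = 1$; hence $\tilde{m}(A \cap J_0) = \int_Y \prod_{i \in A \cap J_0} \psi_i \, d\nu$, and therefore $\tilde{m}(A) = \prod_{J \in \mathcal Q} \tilde{m}(A \cap J)$ for every $A$. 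Note also $\tilde{m}(\emptyset) = 1$ since $(Y,\nu)$ is a probability space.

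Next I would introduce the multilinear generating function in formal indeterminates $t_1,\dots,t_r$, writing $t^A := \prod_{i \in A} t_i$ and
$$\Phi(t) = \sum_{A \subseteq \{1,\dots,r\}} \tilde{m}(A)\, t^A,$$
which has constant term $\tilde{m}(\emptyset)=1$, and then prove the moment–cumulant identity: the coefficient of the squarefree monomial $t^{\{1,\dots,r\}}=t_1 t_2 \cdots t_r$ in the formal expansion $\log \Phi = \sum_{k \geq 1} \frac{(-1)^{k-1}}{k}(\Phi - 1)^k$ equals $\text{Cum}_{[r]}(\psi_1,\dots,\psi_r \mid \mathcal Q)$. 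This follows because the coefficient of $t_1 \cdots t_r$ in $(\Phi-1)^k$ counts ordered partitions of $\{1,\dots,r\}$ into $k$ nonempty blocks, contributing $k! \sum_{|\mathcal P|=k} \prod_{I \in \mathcal P} \tilde{m}(I)$; multiplying by $\frac{(-1)^{k-1}}{k}$ and summing over $k$ reproduces the combinatorial factor $(-1)^{|\mathcal P|-1}(|\mathcal P|-1)!$ exactly.

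Finally, the factorization $\tilde{m}(A) = \prod_{J} \tilde{m}(A \cap J)$ makes $\Phi$ split as a product $\Phi(t) = \prod_{J \in \mathcal Q} \Phi_J(t_J)$, where $\Phi_J(t_J) = \sum_{B \subseteq J} \tilde{m}(B)\, t^B$ involves only the variables indexed by $J$ (and has constant term $1$). Hence $\log \Phi = \sum_{J \in \mathcal Q} \log \Phi_J(t_J)$, and each summand is a formal power series in the variables $\{t_i : i \in J\}$ alone. Since $|\mathcal Q| \geq 2$, no block $J$ equals $\{1,\dots,r\}$, so no $\log \Phi_J$ can contain the full monomial $t_1 \cdots t_r$; its coefficient there is $0$ in every summand, and therefore $\text{Cum}_{[r]}(\psi_1,\dots,\psi_r \mid \mathcal Q) = 0$. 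I expect the main obstacle to be the bookkeeping in the moment–cumulant identity of the previous step, namely verifying that extracting the coefficient of $t_1 \cdots t_r$ from $\frac{(-1)^{k-1}}{k}(\Phi-1)^k$ produces precisely $(-1)^{k-1}(k-1)!$ times the sum over partitions into $k$ blocks; this is routine once the ordered-partition count is correctly identified, but it is the crux that converts the vanishing of a generating-function coefficient into the vanishing of the cumulant.
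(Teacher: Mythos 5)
Your argument is correct and complete: the factorization $\tilde{m}(A)=\prod_{J\in\mathcal Q}\tilde{m}(A\cap J)$, the identification of the conditional cumulant with the coefficient of $t_1\cdots t_r$ in $\log\Phi$, and the splitting $\log\Phi=\sum_{J}\log\Phi_J$ with each summand missing at least one variable when $|\mathcal Q|\geq 2$ are all verified correctly. The paper itself states this proposition without proof, deferring to Bj\"orklund--Gorodnik \cite{BG1'}, and the generating-function argument you give is precisely the standard one used there, so there is nothing to flag.
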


Before proceeding further, we recall the notion of operator norm on $G_{a}:=\SL_l(\mathbb R) \ltimes \mathbb R^l $ and the Sobolev norms on the space $C_c^{\infty}(X_{a}).$
  $G_{a}$ is a connected semisimple Lie group with trivial centre. $G_{a}$ carries a right-invariant Riemannian metric which decends to the quotient $G_{a}/\Gamma_{a},$ where $\Gamma_{a}:= \SL_l(\mathbb Z) \ltimes \mathbb Z^l.$  Let $\rho_{G_{a}}$ be the right-invariant distance function on $G_{a}$ induced by the right-invariant Riemannian metric and  $\rho_{X_{a}}$ be the corresponding distance function on $G_{a}/\Gamma_{a}.$
       \begin{definition}\label{defn:operator_norm}
        Let $\|\cdot\|$ be the norm on the Lie algebra $\mathfrak g=\Lie(G_{a})$ coming from the Riemannian metric on $G_{a}.$ We define the submultiplicative operator norm $\|\cdot\|_{\text{op}}$ on $G_{a}$ by
      \begin{equation}
          \|g\|_{\text{op}} := \max \{\|\Ad(g)Y\|: Y \in \mathfrak g,\,\, \|Y\|=1\} \quad \forall g \in G_{a}.
      \end{equation}
       \end{definition}
  
      In other words, the operator norm of $g$ is just the operator norm of the adjoint map $\Ad(g): \mathfrak g \rightarrow \mathfrak g.$ Now we record the following lemma, which will be useful later. The proof of the following lemma can be found in \cite[Lemma 2.1]{BEG}.
      \begin{lemma}\label{lem:operator_norm}$\ $
          \begin{enumerate}[label=({{\roman*}})]
              \item For all $g \in G_{a},$ $\exists$ constants $c_1 \geq 1$ and $c_2 >0$ such that
              \begin{equation}
            c_1^{-1} \log \|g\|_{\text{op}} -c_2\leq \rho_{G_{a}}(g,e) \leq c_1 \log \|g\|_{\text{op}} +c_2,
              \end{equation}
              where $e$ is the identity element of $G_{a}.$
              \item Let $c=(\diag (2^{u_1},\dots,2^{u_m},2^{-1},\dots,2^{-1}),0) \in G_{a}.$ Then $\exists$ $\lambda >1$ such that for any $q \in \mathbb N,$
              \begin{equation*}
                  \|c^q\|_{\text{op}} \geq \lambda^q.
              \end{equation*}
        \end{enumerate}
      \end{lemma}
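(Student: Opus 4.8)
The plan is to handle the two parts by quite different means: part (i) is a general comparison, valid for any Lie group, between the Riemannian distance to the identity and the logarithm of the operator norm of the adjoint representation, while part (ii) reduces to a spectral radius computation. For the lower bound in (i), namely $c_1^{-1}\log\|g\|_{\text{op}}-c_2\le\rho_{G_a}(g,e)$, I would use submultiplicativity of $\|\cdot\|_{\text{op}}$ together with a Gronwall-type estimate. Fix a piecewise-smooth path $\gamma\colon[0,1]\to G_a$ from $e$ to $g$ whose length is within $1$ of $\rho_{G_a}(g,e)$. Since $\Ad$ is a homomorphism, the function $t\mapsto\log\|\Ad(\gamma(t))\|_{\text{op}}$ is Lipschitz with local constant at most $L\cdot\|\gamma'(t)\|$, where $L=\sup_{\|Y\|=1}\|\ad(Y)\|_{\text{op}}$ depends only on the norm on $\mathfrak g=\Lie(G_a)$; integrating yields $\log\|g\|_{\text{op}}\le L\cdot\operatorname{length}(\gamma)\le L(\rho_{G_a}(g,e)+1)$, which rearranges to the claimed lower bound.

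For the upper bound $\rho_{G_a}(g,e)\le c_1\log\|g\|_{\text{op}}+c_2$, I would exploit the structure $G_a=\SL_l(\R)\ltimes\R^l$ by writing $(g,w)=(I,w)\,(g,0)$ and bounding the two factors separately. For $(g,0)$ one uses the Cartan decomposition $g=k_1ak_2$ of $\SL_l(\R)$: since a maximal compact subgroup $K$ is bounded both in the metric and in the operator norm, and the metric is invariant, it suffices to bound $\rho_{G_a}((a,0),e)\ll\|H\|$ for $a=\exp(H)$, while $\log\|a\|_{\text{op}}\asymp\|H\|$ because $\|\Ad(\exp H)\|_{\text{op}}$ is the exponential of the largest weight of $H$ and all weights are comparable to $\|H\|$ on a Weyl chamber. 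The genuinely delicate factor is $(I,w)$: a straight path along $\R^l$ has length $\approx\|w\|$, far larger than $\log\|w\|$, so instead one reaches $(I,w)$ efficiently through the conjugation identity $(g_0,0)(I,u)(g_0,0)^{-1}=(I,g_0u)$ with $g_0=\exp(sH)$ an expanding diagonal element; choosing $s\asymp\log\|w\|$ makes $u=g_0^{-1}w$ bounded, so $\rho_{G_a}((I,w),e)\ll\log\|w\|\ll\log\|g\|_{\text{op}}+C$. Assembling these estimates and absorbing the bounded contributions into $c_2$ gives the upper bound; this bookkeeping is exactly the content invoked from \cite[Lemma 2.1]{BEG}.

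For part (ii) I would compute directly. With $c_0=\diag(2^{u_1},\dots,2^{u_m},2^{-1},\dots,2^{-1})$ and diagonal entries $t_1,\dots,t_l$, the operator $\Ad(c)=\Ad(c_0,0)$ acts on $\mathfrak g=\mathfrak{sl}_l(\R)\oplus\R^l$ by $(X,v)\mapsto(c_0Xc_0^{-1},c_0v)$; hence it is diagonalizable, with eigenvalues the ratios $t_i/t_j$ on the $\mathfrak{sl}_l$-summand and the entries $t_i$ on the $\R^l$-summand. In particular $t_1/t_l=2^{u_1}/2^{-1}=2^{u_1+1}$ is an eigenvalue, and $u_1>0$ forces $2^{u_1+1}>1$, so the spectral radius of $\Ad(c)$ is at least $\lambda:=2^{u_1+1}>1$. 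Since every submultiplicative norm dominates the spectral radius, $\|c^q\|_{\text{op}}=\|\Ad(c)^q\|_{\text{op}}\ge\rho(\Ad(c))^q\ge\lambda^q$ for all $q\in\N$, as required.

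The main obstacle is the upper bound in part (i) for the translation directions: one must verify that the factor $\R^l$, though metrically distant along straight paths, is nonetheless reachable in logarithmic distance by conjugating a bounded translation with an expanding torus element, and then package this with the Cartan decomposition of the $\SL_l(\R)$ factor into uniform additive constants. By contrast, the lower bound in (i) and all of part (ii) are essentially formal, relying only on submultiplicativity and on the norm-independence of the spectral radius.
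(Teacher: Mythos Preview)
The paper does not give its own proof of this lemma; it simply records the statement and cites \cite[Lemma~2.1]{BEG}. Your sketch is correct and, in outline, reproduces what the cited reference does: for part~(i), the lower bound comes from the Lipschitz (Gronwall-type) estimate for $t\mapsto\log\|\Ad(\gamma(t))\|_{\text{op}}$ along a near-geodesic path, while the upper bound combines the Cartan decomposition on the $\SL_l(\R)$-factor with the conjugation trick $(g_0,0)(I,u)(g_0,0)^{-1}=(I,g_0 u)$ to reach translations in logarithmic distance; for part~(ii), your spectral computation of $\Ad(c)$ on $\mathfrak{sl}_l(\R)\oplus\R^l$ is exactly right, and the eigenvalue $2^{u_1+1}>1$ immediately yields $\|c^q\|_{\text{op}}\ge\lambda^q$.

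One small point worth making explicit in your write-up of the upper bound: when you pass from $\log\|w\|$ back to $\log\|g\|_{\text{op}}$ for $g=(g_0,w)$, the submultiplicativity argument actually gives $\log\|w\|\le 2\log\|g\|_{\text{op}}+O(1)$ (the factor $2$ arising because $\|(g_0^{-1},0)\|_{\text{op}}=\|(g_0,0)\|_{\text{op}}\ll\|g\|_{\text{op}}$). This is harmless for the lemma since the constant is absorbed into $c_1$, but the ``$\ll$'' you wrote there should not be read as $\le$.
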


Every $Z \in \Lie(G_{a})$ gives rise to a first order differential operator $\mathcal{D}_Z$ on $C_c^{\infty}(X_{a})$ defined by 
\begin{equation*}
    \mathcal{D}_Z(\psi)(x):= \lim_{t \rightarrow 0} \frac{\psi(\exp(tZ)x)-\psi(x)}{t} \quad \forall \psi \in C_c^{\infty}(X_{a}).
\end{equation*}
If $\{Z_1,\dots,Z_r\}$ be a fix ordered basis of $\Lie(G_{a}),$ then every element of the universal enveloping algebra $\mathcal{U}(\Lie(G_{a}))$ of $\Lie(G_{a})$ can be written  as a linear combination of monomials $Z_1^{n_1}\dots Z_r^{n_r}$ in the basis elements. Hence we extend the differential operator to $\mathcal{U}(\Lie(G_{a}))$ by defining it for monomials $Y=Z_1^{n_1}\dots Z_r^{n_r}$ as
\begin{equation*}
    \mathcal{D}_Y=\mathcal{D}_{Z_1}^{n_1} \dots \mathcal{D}_{Z_r}^{n_r}.
\end{equation*}
The degree of $\mathcal{D}_Y$ is defined as $\deg(Y):=n_1 + \dots+n_r.$ Now we define a family of Sobolev norms on $C_c^{\infty}(X_{a}).$
\begin{definition}\label{defn:sobolev_norm}
    Let $k \in \mathbb N$ and $\psi \in C_c^{\infty}(X_{a}).$ We define the following
    \begin{equation*}
        \|\psi\|_{C^k}:= \max \{\|\mathcal{D}_Y \psi\|_{\infty}: Y \in \mathcal{U}(\Lie(G_{a})) \,\, \text{is a monomial,} \,\, \deg(Y) \leq k \},
    \end{equation*}
    \begin{equation*}
        S_k(\psi):=\max \{\|\psi\|_{\infty},\|\psi\|_{C^k}\}.
    \end{equation*}
\end{definition}

\begin{lemma}\label{lem:pro_sob_norm} 
The family of Sobolev norms on $C_c^{\infty}(X_{a})$ satisfies the following properties.
    \begin{enumerate}[label=({{\arabic*}})]
        \item For any $\psi \in C_c^{\infty}(X_{a})$ and $g \in G_{a}$,
        \begin{equation*}
         S_k(\psi \circ g ) \ll_k \|g\|_{\text{op}}^k S_k(\psi).
        \end{equation*}
        \item For any $\psi_1,\psi_2 \in C_c^{\infty}(X_{a}),$
        \begin{equation*}
             S_k(\psi_1 \psi_2) \ll_k S_k(\psi_1) S_k(\psi_2).
        \end{equation*}
\end{enumerate}
\end{lemma}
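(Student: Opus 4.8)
The plan is to reduce both statements to estimates on $\|\mathcal{D}_Y\phi\|_\infty$ for monomials $Y \in \mathcal{U}(\Lie(G_a))$ of degree $\le k$, and to repeatedly invoke one elementary fact about the operators $\mathcal{D}_Z$. Since $Z \mapsto \mathcal{D}_Z$ sends Lie brackets to $\pm$ brackets, any product $\mathcal{D}_{Z_{p_1}}\cdots\mathcal{D}_{Z_{p_d}}$ of $d \le k$ basis operators taken in an arbitrary order can be rewritten, using the commutation relations of $\Lie(G_a)$, as a linear combination of the ordered monomials $Z_1^{n_1}\cdots Z_r^{n_r}$ of degree $\le d$ appearing in the definition of $\|\cdot\|_{C^d}$, with a bounded number of terms and coefficients depending only on the structure constants of $\Lie(G_a)$ and on $k$. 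Consequently $\|\mathcal{D}_{Z_{p_1}}\cdots\mathcal{D}_{Z_{p_d}}\phi\|_\infty \ll_k \|\phi\|_{C^d} \le S_k(\phi)$ for every $\phi \in C_c^\infty(X_a)$ and every $d \le k$. I would establish this reordering bound once at the outset, since both parts rest on it.

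For part (1) the key computation is the conjugation identity: using $g\exp(tZ) = \exp(t\,\Ad(g)Z)\,g$ one finds $\mathcal{D}_Z(\psi\circ g) = (\mathcal{D}_{\Ad(g)Z}\psi)\circ g$. Iterating this along a monomial $Y = Z_{i_1}\cdots Z_{i_d}$ of degree $d \le k$ gives $\mathcal{D}_Y(\psi\circ g) = \big(\mathcal{D}_{\Ad(g)Z_{i_1}}\cdots\mathcal{D}_{\Ad(g)Z_{i_d}}\psi\big)\circ g$. Expanding each $\Ad(g)Z_{i_j} = \sum_p a_{j,p}Z_p$ in the fixed basis, equivalence of the coordinate and Riemannian norms on the finite-dimensional space $\mathfrak g$ gives $|a_{j,p}| \ll \|\Ad(g)Z_{i_j}\| \le \|g\|_{\text{op}}\|Z_{i_j}\| \ll \|g\|_{\text{op}}$. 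Multilinearity of $\mathcal{D}$ then turns $\mathcal{D}_{\Ad(g)Z_{i_1}}\cdots\mathcal{D}_{\Ad(g)Z_{i_d}}$ into a sum of at most $(\dim\mathfrak g)^d$ products of basis operators, each with coefficient of size $\ll \|g\|_{\text{op}}^d$; applying the reordering bound and $\|\phi\circ g\|_\infty = \|\phi\|_\infty$ yields $\|\mathcal{D}_Y(\psi\circ g)\|_\infty \ll_k \|g\|_{\text{op}}^d S_k(\psi)$. Since $G_a$ is unimodular, $|\det\Ad(g)| = 1$, so the largest singular value of $\Ad(g)$ is $\ge 1$, i.e.\ $\|g\|_{\text{op}}\ge 1$ and hence $\|g\|_{\text{op}}^d \le \|g\|_{\text{op}}^k$ for $d \le k$. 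Taking the maximum over $Y$ (the empty monomial accounting for $\|\psi\circ g\|_\infty$) gives $S_k(\psi\circ g) \ll_k \|g\|_{\text{op}}^k S_k(\psi)$.

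For part (2) I would use that each $\mathcal{D}_Z$ is first-order, hence a derivation satisfying $\mathcal{D}_Z(\psi_1\psi_2) = (\mathcal{D}_Z\psi_1)\psi_2 + \psi_1(\mathcal{D}_Z\psi_2)$. Applying the generalized Leibniz rule along a monomial $Y = Z_{i_1}\cdots Z_{i_d}$ of degree $d \le k$ expands $\mathcal{D}_Y(\psi_1\psi_2)$ as a sum over the $2^d$ subsets $S \subseteq \{1,\dots,d\}$ of terms $(\mathcal{D}_{Y_S}\psi_1)(\mathcal{D}_{Y_{S^c}}\psi_2)$, where $Y_S$ and $Y_{S^c}$ are the sub-products of basis operators indexed by $S$ and its complement, in their original order. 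The reordering bound gives $\|\mathcal{D}_{Y_S}\psi_1\|_\infty \ll_k S_k(\psi_1)$ and $\|\mathcal{D}_{Y_{S^c}}\psi_2\|_\infty \ll_k S_k(\psi_2)$, and there are at most $2^k$ terms, so $\|\mathcal{D}_Y(\psi_1\psi_2)\|_\infty \ll_k S_k(\psi_1)S_k(\psi_2)$. Taking the maximum over $Y$, and noting $\|\psi_1\psi_2\|_\infty \le S_k(\psi_1)S_k(\psi_2)$, yields the claim.

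The conjugation and Leibniz identities are routine; the one place requiring genuine care is the reordering step, where one must confirm that passing from an arbitrarily ordered product of basis operators to the ordered monomials defining $\|\cdot\|_{C^d}$ costs only a constant depending on $k$ and the fixed algebra $\Lie(G_a)$, together with the observation $\|g\|_{\text{op}}\ge 1$ used to absorb the lower powers $\|g\|_{\text{op}}^d$ into $\|g\|_{\text{op}}^k$. I expect this bookkeeping, rather than any single hard estimate, to be the main obstacle.
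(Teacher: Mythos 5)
Your proof is correct, and it is precisely the standard argument that the paper omits (its "proof" consists of the single remark that these properties are standard): the conjugation identity $\mathcal{D}_Z(\psi\circ g)=(\mathcal{D}_{\Ad(g)Z}\psi)\circ g$ plus norm equivalence on $\mathfrak g$ for part (1), the Leibniz rule for part (2), and in both cases the PBW-type reordering of products of basis operators at a cost depending only on $k$ and the structure constants. Your attention to the reordering step and to the lower bound $\|g\|_{\text{op}}\gg 1$ (needed to absorb $\|g\|_{\text{op}}^{d}$ into $\|g\|_{\text{op}}^{k}$ for $d\le k$) supplies exactly the bookkeeping the paper leaves to the reader.
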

\begin{proof}
    The proof of these properties of Sobolev norms is very standard. Hence, we leave the details to interested readers.
\end{proof}

Next, we recall the quantitative exponential mixing of all orders for Lie groups from \cite{BEG}, which will be very instrumental in the computation of cumulants later. The following is a particular case of \cite[Theorem 1.1]{BEG}.
\begin{theorem}\label{thm:quan_mixing}
    For all integer $r\geq 2$ and all sufficiently large $q,$ there exists $\delta=\delta(r,q)>0$ such that for all $\psi_1,\dots,\psi_r \in C_c^{\infty}(X_a)$ and $g_1,\dots,g_r \in G_{a}$
    \begin{equation}
        \left|\int_{X_a} (\psi_1 \circ g_1) \dots (\psi_r \circ g_r) \,\, d\mu_a - \prod_{i=1}^{r} \left(\int_{X_a} \psi_i \,\, d\mu_a\right)\right| \ll_{r,q} e^{-\delta \min_{i\neq j} \rho_{G_{a}}(g_i,g_j)} S_q(\psi_1) \dots S_q(\psi_r).
    \end{equation}
\end{theorem}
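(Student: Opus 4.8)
The plan is to obtain Theorem \ref{thm:quan_mixing} as the announced special case of the general quantitative multiple-mixing theorem \cite[Theorem 1.1]{BEG}, so the real work is to check that the hypotheses of that theorem hold for the concrete triple $(G_a,\Gamma_a,\mu_a)$ and then to translate its conclusion into the present language of the operator norm $\|\cdot\|_{\text{op}}$, the right-invariant distance $\rho_{G_a}$, and the Sobolev norms $S_q$. First I would recall that \cite[Theorem 1.1]{BEG} establishes exponential mixing of all orders for the action of the relevant homogeneous space on its invariant probability measure, under a quantitative $2$-point mixing (spectral-gap) hypothesis encoding exponential decay of matrix coefficients on the subspace of mean-zero functions. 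As recorded in the excerpt, $G_a=\SL_l(\mathbb R)\ltimes\mathbb R^l$ together with $\Gamma_a$ and the $G_a$-invariant probability measure $\mu_a$ on $X_a=G_a/\Gamma_a$ fits this standing framework; the single genuinely dynamical input to be verified is the $r=2$ case, namely the quantitative pairwise mixing of single $G_a$-translates, which is the spectral-gap property of the $G_a$-action on mean-zero $L^2$ functions and holds in our setting.

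Next I would reconcile the three ingredients of the stated bound with those appearing in \cite{BEG}. For the decay rate, the estimate in \cite{BEG} decays in a distance measured through the logarithm of an operator norm, whereas our statement uses $\min_{i\neq j}\rho_{G_a}(g_i,g_j)$; Lemma \ref{lem:operator_norm}(i) supplies the two-sided comparison $c_1^{-1}\log\|g\|_{\text{op}}-c_2\le\rho_{G_a}(g,e)\le c_1\log\|g\|_{\text{op}}+c_2$, and since $\rho_{G_a}$ is right-invariant we have $\rho_{G_a}(g_i,g_j)=\rho_{G_a}(g_ig_j^{-1},e)$, so the two decay variables are comparable up to adjusting $\delta$ and the implied constant. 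For the norms, I would match the Sobolev norm used in \cite{BEG} with $S_q$ of Definition \ref{defn:sobolev_norm} for $q$ large enough that $S_q$ both dominates $\|\cdot\|_\infty$ and controls enough derivatives for the decay-of-coefficients estimate; this is the reason the theorem is stated only for all sufficiently large $q$. Finally, the subtracted quantity $\prod_i\int\psi_i\,d\mu_a$ is exactly the independent main term, so what must be bounded is the connected correlation.

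The substantive part I would actually want to understand, rather than merely cite, is the mechanism behind the general theorem: induction on $r$ with the $r=2$ case as the engine. After writing each $\psi_i=\bar\psi_i+\psi_i^0$ as mean plus mean-zero part and expanding the product, it suffices to bound correlations of mean-zero functions; one then sorts the configuration $g_1,\dots,g_r$ and, using right-invariance of both $\mu_a$ and $\rho_{G_a}$, recenters so as to isolate a cluster separated from the rest by a distance comparable to $\min_{i\neq j}\rho_{G_a}(g_i,g_j)$, and applies $2$-point mixing across that gap to peel off a factor and reduce $r$ to $r-1$.

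The main obstacle is precisely the control of Sobolev norms under this peeling: collapsing several translated factors into one function multiplies their Sobolev norms by powers of the $\|g_i\|_{\text{op}}$ through Lemma \ref{lem:pro_sob_norm}, and these are exponentially large in $\rho_{G_a}$, so a careless grouping would overwhelm the gain $e^{-\delta\,(\cdot)}$ from $2$-point mixing. The resolution in \cite{BEG} is to choose the splitting gap so that the operator-norm growth is dominated by the mixing decay and to re-center at each step so the inner cluster keeps bounded complexity; in deducing Theorem \ref{thm:quan_mixing} I would invoke this after verifying that our $S_q$ and $\rho_{G_a}$ obey exactly the quantitative bookkeeping (Lemmas \ref{lem:operator_norm} and \ref{lem:pro_sob_norm}) on which it relies, and then absorb all comparison constants from these lemmas into the final $\delta=\delta(r,q)$ and the implied constant $\ll_{r,q}$.
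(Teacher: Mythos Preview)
Your proposal is correct and matches the paper's approach: the paper simply records Theorem \ref{thm:quan_mixing} as a particular case of \cite[Theorem 1.1]{BEG} without further argument, and you do the same while additionally spelling out the hypothesis verification (spectral gap for $G_a\curvearrowright L^2_0(X_a)$) and the translation via Lemmas \ref{lem:operator_norm} and \ref{lem:pro_sob_norm}. Your elaboration on the inductive mechanism behind \cite{BEG} is accurate but goes beyond what the paper itself provides.
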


To apply Theorem \ref{thm:quan_mixing} in the computation of cumulants, we need to approximate $\hat{\chi}$ by smooth functions. For that first we approximate $\chi$ (following \cite[Section 6]{BG1})by a family of non-negative smooth functions $f_{\varepsilon} \in C_{c}^{\infty}(\mathbb R^{m+n})$ such that $\text{supp }$$f_{\varepsilon} \subseteq (\Omega_2)_{\varepsilon}$, and
\begin{equation}\label{equ:chi_f_epsilon}
    \chi \leq f_{\varepsilon} \leq 1,\,\, \|f_{\varepsilon}\|_{C^k} \ll \varepsilon^{-k},\,\, \|\chi - f_{\varepsilon}\|_{1} \ll \varepsilon  , \,\, \| \chi - f_{\varepsilon}\|_2 \ll \sqrt{\varepsilon},
\end{equation}
where $(\Omega_2)_{\varepsilon}$ is an $\varepsilon$-neighbourhood of the set $\Omega_2.$

For simplicity, from now on by $\alpha$ we denote the function $\alpha_{a,l}$ on $X_{a}$ defined by \eqref{equ:alpha_affine}.
\begin{lemma}\label{lem:trun_Siegel}
    For every $b>0$, there is a family of smooth functions $\{\eta_L\} \in C_{c}^{\infty}(X_a)$ such that, for every $L>0$
    \begin{equation}\label{equ:trun_Siegel}
        0 \leq \eta_L \leq 1, \,\, \eta_L=1 \,\, \text{on}\,\, \{\alpha \leq b^{-1}L\},\,\, \eta_L=0 \,\, \text{on}\,\, \{\alpha > bL\},\,\, \|\eta_L\|_{C^k} \ll 1.
    \end{equation}
\end{lemma}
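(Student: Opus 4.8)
The plan is to reduce the whole statement to composing a fixed one–variable cutoff with a smooth function comparable to $\alpha=\alpha_{l,a}$. The obstruction to doing this directly is that, by \eqref{equ:alpha_s}, $\alpha$ is defined as a supremum over subspaces and is therefore merely continuous; one cannot control the Sobolev norms of $\chi\circ\alpha$ because $\alpha$ is not differentiable. So the first and decisive step is to manufacture a smooth surrogate $\tilde\alpha\in C^\infty(X_a)$ with $\tilde\alpha\asymp\alpha$ and, crucially, with \emph{multiplicative} derivative bounds $|\mathcal{D}_Y\tilde\alpha|\ll_Y\tilde\alpha$ for every monomial $Y\in\mathcal U(\Lie(G_a))$.

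To build $\tilde\alpha$ I would mollify $\alpha$ along the group. Fix $\psi\in C_c^\infty(G_a)$ with $\psi\ge0$, supported in a small neighbourhood $K$ of the identity, and normalized so that $\int_{G_a}\psi\,dg=1$ for a right Haar measure $dg$; set $\tilde\alpha(\Lambda)=\int_{G_a}\alpha(g\Lambda)\psi(g)\,dg$. The input that makes this work is the quasi-invariance of $\alpha$: a direct estimate from \eqref{equ:alpha_s} (the covolume of a sublattice changes by a bounded factor under $g$, and the $\mathbb R^l$-directions are invisible since $\alpha_{l,a}=\alpha_l\circ\tilde\pi$) shows that $g\mapsto\alpha(g\Lambda)/\alpha(\Lambda)$ is bounded on $K$ uniformly in $\Lambda$ by a constant $c(K)\ge1$ with $c(K)\to1$ as $K\downarrow\{e\}$. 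Integrating gives $c(K)^{-1}\alpha\le\tilde\alpha\le c(K)\alpha$. For smoothness and the derivative bound, I would move the differentiation onto $\psi$ via the substitution $g\mapsto g\exp(-tZ)$ and the right-invariance of $dg$, which yields $\mathcal{D}_Y\tilde\alpha(\Lambda)=\int_{G_a}\alpha(g\Lambda)(\widetilde Y\psi)(g)\,dg$ for a fixed $\widetilde Y\psi\in C_c(G_a)$ supported in $K$. Hence $|\mathcal{D}_Y\tilde\alpha(\Lambda)|\le\|\widetilde Y\psi\|_{1}\sup_{g\in K}\alpha(g\Lambda)\ll_Y\alpha(\Lambda)\asymp\tilde\alpha(\Lambda)$, with an implied constant depending on $\psi$ (hence on $K$) and $Y$, but not on $\Lambda$.

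With $\tilde\alpha$ in hand, set $\eta_L:=\chi(\tilde\alpha/L)$, where $\chi:\mathbb R_{\ge0}\to[0,1]$ is a fixed smooth function with $\chi\equiv1$ on $[0,r_1]$ and $\chi\equiv0$ on $[r_2,\infty)$. Here one may assume $b>1$ (for $b\le1$ the two prescribed conditions on $\eta_L$ are incompatible); I then choose $K$ small enough that $c:=c(K)$ satisfies $cb^{-1}<c^{-1}b$, and pick $cb^{-1}\le r_1<r_2\le c^{-1}b$. The level-set conditions follow at once: $\alpha\le b^{-1}L\Rightarrow\tilde\alpha/L\le cb^{-1}\le r_1\Rightarrow\eta_L=1$, while $\alpha>bL\Rightarrow\tilde\alpha/L>c^{-1}b\ge r_2\Rightarrow\eta_L=0$; clearly $0\le\eta_L\le1$, and $\eta_L\in C_c^\infty(X_a)$ because $\alpha$ is proper (Mahler) and $\tilde\alpha\asymp\alpha$. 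For the uniform norm bound, the chain and product rules express $\mathcal{D}_Y\eta_L$ as a sum of terms $\chi^{(j)}(\tilde\alpha/L)\prod_i(\mathcal{D}_{Y_i}\tilde\alpha)/L$ with $\sum_i\deg Y_i\le\deg Y$; on the support of $\chi^{(j)}(\cdot/L)$ with $j\ge1$ one has $\tilde\alpha\asymp L$, so each factor $|\mathcal{D}_{Y_i}\tilde\alpha|/L\ll\tilde\alpha/L\asymp1$, giving $\|\eta_L\|_{C^k}\ll_k1$ uniformly in $L$ (the implied constant depending on $b$ and $k$, which is all that \eqref{equ:trun_Siegel} requires).

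The main obstacle is concentrated entirely in the first step: producing a smooth comparison function whose derivatives are controlled by the function itself. This is exactly where the multiplicative quasi-invariance of $\alpha$ under $G_a$ is indispensable, and where one must be careful that the implied constants in $|\mathcal{D}_Y\tilde\alpha|\ll_Y\tilde\alpha$ depend on the mollifier but never on $L$. Once this estimate is in place, the scaling $\eta_L=\chi(\tilde\alpha/L)$ automatically produces the $L$-uniform Sobolev bounds, and everything else is routine.
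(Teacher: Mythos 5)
Your proposal is correct, and it is worth comparing with what the paper actually does: the paper gives no argument at all, deferring to Lemma~4.11 of Bj\"orklund--Gorodnik \cite{BG1}, whose construction smooths the \emph{indicator of the sublevel set} $\{\alpha\le L\}$ by convolving it with a bump function $\phi\in C_c^\infty(G_a)$ supported near the identity; there the $C^k$ bound is immediate because every derivative lands on $\phi$ (so $\|\eta_L\|_{C^k}\ll\|\phi\|_{C^k}$ uniformly in $L$), and the two sublevel-set conditions follow from exactly the quasi-invariance $\alpha(g\Lambda)\asymp\alpha(\Lambda)$ for $g$ in a small compact set that you identify as the crux. Your route instead mollifies $\alpha$ itself to produce a smooth surrogate $\tilde\alpha\asymp\alpha$ with the multiplicative bounds $|\mathcal{D}_Y\tilde\alpha|\ll_Y\tilde\alpha$, and then composes with a fixed one-variable cutoff; this costs you an extra Fa\`a di Bruno computation on the transition region $\{\tilde\alpha\asymp L\}$, but buys a reusable smooth comparison function for $\alpha$, which is a slightly stronger intermediate object than the cutoffs themselves. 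Both arguments rest on the same two inputs (quasi-invariance of $\alpha$ under small group elements, with constant tending to $1$, and properness of $\alpha$ via Mahler for the compact support), and your observation that the statement is vacuous or contradictory for $b\le1$ --- so that one must read it as ``for every sufficiently large $b$,'' as in the formulation of \cite{BG1} --- is accurate and is a point the paper's statement glosses over.
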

\begin{proof}
    The proof of this Lemma is analogous to \cite[Lemma 4.11]{BG1}.
\end{proof}

The truncated Siegel transform of a bounded function $f : \mathbb R^{m+n} \rightarrow \mathbb R$ with compact support is defined as
$$\hat{f}^{(L)}:=\hat{f} \cdot \eta_L.$$
\begin{lemma}\label{lem:tranc_Siegel_property}
If $f \in C^{\infty}_{c}(X_a),$ then the truncated Siegel transform $\hat{f}^{(L)} \in  C^{\infty}_{c}(X_a) $ and also we have  
\begin{equation}
    \left\|\hat{f}^{(L)}\right\|_{\infty} \ll_{\supp(f)} L \|f\|_{\infty},
\end{equation}
\begin{equation}
    \left\|\hat{f}^{(L)}\right\|_{C^k} \ll_{\supp(f)} L \|f\|_{C^k},
\end{equation}
\begin{equation}
 \left\|\hat{f} - \hat{f}^{(L)}\right\|_1  \ll_{\supp(f),p} L^{-p} \|f\|_{\infty} \quad  \forall \,\, p < m+n-1 ,  
\end{equation}
\begin{equation}
     \left\|\hat{f} - \hat{f}^{(L)}\right\|_2  \ll_{\supp(f),p} L^{-(p-1)/2} \|f\|_{\infty} \quad  \forall \,\, p < m+n-1   
\end{equation}
\begin{equation}
S_k \left(\hat{f_{\varepsilon}}^{(L)} \right) \ll_{k} \varepsilon^{-k} L,
\end{equation}
where $f_{\varepsilon}$ is given by \eqref{equ:chi_f_epsilon}.
\end{lemma}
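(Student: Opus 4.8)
The plan is to treat the five estimates in three groups, using the pointwise bound of Proposition \ref{prop:Siegel_alpha_bound_affine} together with the tail estimate of Proposition \ref{prop:alpha_Lp_affine} and the support properties of $\eta_L$ recorded in Lemma \ref{lem:trun_Siegel}. The two $L^\infty$-type bounds (the sup bound and the $C^k$ bound) exploit that $\eta_L$ is supported on $\{\alpha \le bL\}$, so that on this set the unbounded factor $\hat f$ grows at most like $\alpha \le bL$. The two integral bounds exploit that $\hat f - \hat f^{(L)} = \hat f\,(1-\eta_L)$ is supported on $\{\alpha > b^{-1}L\}$, where $\alpha$ is large, so that a moment computation against the tail bound for $\alpha$ produces the claimed decay. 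The last estimate is then immediate by specializing the first two bounds to $f=f_\varepsilon$ and inserting $\|f_\varepsilon\|_\infty\le 1$ and $\|f_\varepsilon\|_{C^k}\ll\varepsilon^{-k}$ from \eqref{equ:chi_f_epsilon}.

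For the first two bounds, note that $\hat f$ is smooth on $X_a$ (the defining sum is locally finite) and that $\eta_L\in C_c^\infty(X_a)$ with $\supp(\eta_L)\subseteq\{\alpha\le bL\}$, which is compact by the properness of $\alpha$; hence $\hat f^{(L)}=\hat f\,\eta_L\in C_c^\infty(X_a)$. For the sup bound, on $\supp(\eta_L)$ Proposition \ref{prop:Siegel_alpha_bound_affine} gives $|\hat f(\Lambda)|\ll_{\supp(f)}\|f\|_\infty\,\alpha(\Lambda)\le bL\|f\|_\infty$, and since $0\le\eta_L\le1$ we obtain $\|\hat f^{(L)}\|_\infty\ll_{\supp(f)}L\|f\|_\infty$. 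For the $C^k$ bound I would expand $\mathcal{D}_Y(\hat f\,\eta_L)$ by the Leibniz rule over monomials $Y$ with $\deg Y\le k$, writing each term as $\mathcal{D}_{Y'}\hat f\cdot\mathcal{D}_{Y''}\eta_L$ with $\deg Y'+\deg Y''\le k$. The key point is that $\mathcal{D}_{Y'}\hat f$ is again a Siegel transform: differentiating the sum term-by-term along the $G_a$-action shows $\mathcal{D}_{Y'}\hat f=\widehat{D_{Y'}f}$, where $D_{Y'}$ is the differential operator on $\mathbb R^l$ induced by the infinitesimal action, with $D_{Y'}f\in C_c^\infty(\mathbb R^l)$, $\supp(D_{Y'}f)\subseteq\supp(f)$ and $\|D_{Y'}f\|_\infty\ll_{\supp(f)}\|f\|_{C^k}$. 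Applying Proposition \ref{prop:Siegel_alpha_bound_affine} to $D_{Y'}f$ and using $\|\mathcal{D}_{Y''}\eta_L\|_\infty\le\|\eta_L\|_{C^k}\ll 1$ together with the containment of every surviving term in $\{\alpha\le bL\}$, each term is $\ll_{\supp(f)}\|f\|_{C^k}\,\alpha\ll_{\supp(f)}L\|f\|_{C^k}$, which gives the stated bound.

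For the two integral bounds write $\hat f-\hat f^{(L)}=\hat f\,(1-\eta_L)$, which vanishes on $\{\alpha\le b^{-1}L\}$. Hence, using $|\hat f|\ll_{\supp(f)}\|f\|_\infty\alpha$ and $0\le 1-\eta_L\le1$,
\[
\|\hat f-\hat f^{(L)}\|_1\ll_{\supp(f)}\|f\|_\infty\int_{\{\alpha>b^{-1}L\}}\alpha\,d\mu_a,\qquad
\|\hat f-\hat f^{(L)}\|_2^2\ll_{\supp(f)}\|f\|_\infty^2\int_{\{\alpha>b^{-1}L\}}\alpha^2\,d\mu_a.
\]
I would evaluate these truncated moments by the layer-cake formula together with the tail bound $\mu_a(\{\alpha>t\})\ll_q t^{-q}$ (valid for $q<l=m+n$, from Proposition \ref{prop:alpha_Lp_affine}). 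A direct computation gives $\int_{\{\alpha>cL\}}\alpha\,d\mu_a\ll_q L^{\,1-q}$ for $1<q<l$ and $\int_{\{\alpha>cL\}}\alpha^2\,d\mu_a\ll_q L^{\,2-q}$ for $2<q<l$. Setting $q=p+1$ then yields $\|\hat f-\hat f^{(L)}\|_1\ll_{\supp(f),p}L^{-p}\|f\|_\infty$ and $\|\hat f-\hat f^{(L)}\|_2\ll_{\supp(f),p}L^{-(p-1)/2}\|f\|_\infty$, both for $p<l-1=m+n-1$; the constraint $p<m+n-1$ (rather than $m+n$) is exactly the cost of carrying one or two extra powers of $\alpha$ under the integral.

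I expect the only genuinely delicate point to be the identity $\mathcal{D}_{Y'}\hat f=\widehat{D_{Y'}f}$ used in the $C^k$ estimate, i.e.\ justifying that derivatives of the Siegel transform along the $G_a$-action are themselves Siegel transforms of compactly supported functions obtained by applying the induced polynomial-coefficient differential operators to $f$, with the support and sup-norm control stated above; everything else is bookkeeping. Granting the first two bounds, the Sobolev estimate follows at once: specializing to $f=f_\varepsilon$ and using $\|f_\varepsilon\|_\infty\le1$, $\|f_\varepsilon\|_{C^k}\ll\varepsilon^{-k}$ gives $\|\hat{f_\varepsilon}^{(L)}\|_\infty\ll L$ and $\|\hat{f_\varepsilon}^{(L)}\|_{C^k}\ll\varepsilon^{-k}L$, whence $S_k(\hat{f_\varepsilon}^{(L)})=\max\{\|\hat{f_\varepsilon}^{(L)}\|_\infty,\|\hat{f_\varepsilon}^{(L)}\|_{C^k}\}\ll_k\varepsilon^{-k}L$, the supports $(\Omega_2)_\varepsilon$ being uniformly bounded for small $\varepsilon$.
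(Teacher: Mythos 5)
Your argument is correct and matches the underlying proof: the paper itself simply cites \cite[Lemma 4.12]{BG1} for the first four estimates, and the proof there runs exactly along the lines you reconstruct (pointwise domination $|\hat f|\ll_{\supp(f)}\|f\|_\infty\,\alpha$ on $\supp(\eta_L)\subseteq\{\alpha\le bL\}$, a Leibniz/Siegel-transform identity for the $C^k$ bound, and tail estimates for $\alpha$ on $\{\alpha>b^{-1}L\}$ for the $L^1$ and $L^2$ bounds). Your derivation of the final Sobolev estimate from the first two bounds together with \eqref{equ:chi_f_epsilon} is identical to the computation the paper writes out.
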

\begin{proof}
The proof of the first four inequalities follows from \cite[Lemma 4.12]{BG1}. For the last inequality, consider
\begin{eqnarray}
S_k \left(\hat{f_{\varepsilon}}^{(L)}\right) &=& \max \left\{\left\|\hat{f_{\varepsilon}}^{(L)}\right\|_{\infty},\left\|\hat{f_{\varepsilon}}^{(L)}\right\|_{C^k} \right\} \\ &\ll_{\supp(f_{\varepsilon})}& \max \left\{L\|f_{\varepsilon}\|_{\infty},L\|f_{\varepsilon}\|_{C^k} \right\} \\ &\ll& \max \{L,L\varepsilon^{-k}\}, \,\, \text{using} \,\, \eqref{equ:chi_f_epsilon} \\ &=& \varepsilon^{-k}L.
\end{eqnarray}
   
\end{proof}

Now we are going to approximate $\hat{\chi}$ by means of truncated Siegel transforms of $f_{\varepsilon},$ where $f_{\varepsilon}$ are given by \eqref{equ:chi_f_epsilon}. Next we approximate $H_M$ by $H_M^{(\varepsilon,L)}$ such that $\|H_M-H_M^{(\varepsilon,L)}\|_1 \rightarrow 0.$ Then $H_M$ and $H_M^{(\varepsilon,L)}$ converge to the same limit in distribution. Due to the approximation, we get two sequences $\varepsilon(M)$ and $L(M)$ which depends on $M.$ At the end, we give explicit choices of these sequences.
\begin{proposition}\label{prop:H_epsilon_L}
   Define 
   \begin{equation}\label{equ:H_epsilon_L}
       H_M^{(\varepsilon,L)}:=\frac{1}{\sqrt{M}} \left( \sum_{k=0}^{M-1}   \hat{f_{\varepsilon}}^{(L)} \circ c^k - M \int_{X_a} \hat{f_{\varepsilon}}^{(L)} \,\,  d\mu_a \right),
   \end{equation}
   where $\hat{f_{\varepsilon}}^{(L)}=\hat{f_{\varepsilon}} \eta_L \in C_{c}^{\infty}(X_a)$ are smooth approximations to $\hat{\chi}.$
   Then, to prove  Theorem \ref{thm:affine_reduc} it is enough to show that 
   \begin{equation*}
        H_M^{(\varepsilon,L)} \rightarrow N(0,\eta^2) \,\, \text{in distribution as} \,\, M \rightarrow \infty,
    \end{equation*}
    where $\eta=\vol(\Omega_2)^{1/2}$.
\end{proposition}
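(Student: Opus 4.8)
The plan is to invoke the standard stability of convergence in distribution under $L^1$-approximation: if one can arrange $\|H_M - H_M^{(\varepsilon,L)}\|_1 \to 0$ as $M\to\infty$ for a suitable choice $\varepsilon = \varepsilon(M)\downarrow 0$, $L = L(M)\uparrow\infty$, then $H_M - H_M^{(\varepsilon,L)}\to 0$ in probability, and a routine Slutsky-type argument shows that $H_M$ and $H_M^{(\varepsilon,L)}$ have the same distributional limit. Thus the whole task reduces to producing such sequences and estimating the $L^1$-error.

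First I would subtract the two expressions directly from their definitions \eqref{equ:29} and \eqref{equ:H_epsilon_L}. Writing $\phi := \hat{\chi} - \hat{f_{\varepsilon}}^{(L)}$ and using $\vol(\Omega_2) = \int_{X_a}\hat{\chi}\,d\mu_a$ from \eqref{equ:22}, the centering terms combine to give
$$H_M - H_M^{(\varepsilon,L)} = \frac{1}{\sqrt{M}}\sum_{k=0}^{M-1}\left(\phi\circ c^k - \int_{X_a}\phi\,d\mu_a\right).$$
Since $\mu_a$ is $G_a$-invariant and $c\in G_a$, each $c^k$ preserves $\mu_a$, so $\|\phi\circ c^k\|_1 = \|\phi\|_1$ for every $k$; combining this with $\left|\int_{X_a}\phi\,d\mu_a\right|\le\|\phi\|_1$ and the triangle inequality yields
$$\|H_M - H_M^{(\varepsilon,L)}\|_1 \le \frac{1}{\sqrt{M}}\cdot M\cdot 2\|\phi\|_1 = 2\sqrt{M}\,\left\|\hat{\chi}-\hat{f_{\varepsilon}}^{(L)}\right\|_1.$$

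Next I would bound $\|\hat{\chi}-\hat{f_{\varepsilon}}^{(L)}\|_1$ by splitting it as $\|\hat{\chi}-\hat{f_{\varepsilon}}\|_1 + \|\hat{f_{\varepsilon}}-\hat{f_{\varepsilon}}^{(L)}\|_1$. For the first piece, linearity of the Siegel transform and the sign condition $\chi\le f_{\varepsilon}$ from \eqref{equ:chi_f_epsilon} make $\widehat{f_{\varepsilon}-\chi}$ nonnegative, so Siegel's mean value theorem \eqref{equ:22} gives $\|\hat{\chi}-\hat{f_{\varepsilon}}\|_1 = \int_{\mathbb R^l}(f_{\varepsilon}-\chi)\,d\mathbf x = \|\chi-f_{\varepsilon}\|_1 \ll \varepsilon$. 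For the second piece I invoke the third inequality of Lemma \ref{lem:tranc_Siegel_property} together with $\|f_{\varepsilon}\|_{\infty}\le 1$ to get $\|\hat{f_{\varepsilon}}-\hat{f_{\varepsilon}}^{(L)}\|_1 \ll_p L^{-p}$ for every $p < m+n-1$. Altogether $\|H_M - H_M^{(\varepsilon,L)}\|_1 \ll_p \sqrt{M}\,(\varepsilon + L^{-p})$.

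Finally I would fix some $p$ with $\tfrac12 < p < m+n-1$ (possible since $m+n\ge 5$) and choose, say, $\varepsilon(M)$ and $L(M)$ so that $\sqrt{M}\,\varepsilon(M)\to 0$ and $\sqrt{M}\,L(M)^{-p}\to 0$ (for example $\varepsilon(M)=M^{-1}$ and $L(M)=M^{\beta}$ with $\beta p > \tfrac12$), forcing the right-hand side to $0$ and completing the reduction. I expect no genuine obstacle: this is a soft approximation estimate, and the only delicate point is tracking the $\sqrt{M}$ factor so that the $L^1$-error still vanishes after multiplication by $\sqrt{M}$. The truly binding constraints on $\varepsilon(M)$ and $L(M)$ do not arise here but in the subsequent cumulant analysis, where the Sobolev growth $S_k(\hat{f_{\varepsilon}}^{(L)})\ll\varepsilon^{-k}L$ must be balanced against the exponential mixing rate of Theorem \ref{thm:quan_mixing}; I would therefore defer the final explicit specification of these sequences to that stage.
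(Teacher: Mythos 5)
Your proposal is correct and follows essentially the same route as the paper: reduce to $\|H_M - H_M^{(\varepsilon,L)}\|_1 \to 0$, use $G_a$-invariance of $\mu_a$ to collapse the sum to $2\sqrt{M}\,\|\hat{\chi}-\hat{f_{\varepsilon}}^{(L)}\|_1$, split via the triangle inequality into the smoothing error (controlled by Siegel's mean value theorem and \eqref{equ:chi_f_epsilon}, giving $\ll\varepsilon$) and the truncation error (controlled by Lemma \ref{lem:tranc_Siegel_property}, giving $\ll L^{-p}$), and then choose $\varepsilon(M)$, $L(M)$ so that $\sqrt{M}(\varepsilon+L^{-p})\to 0$. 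Your explicit mention of the Slutsky-type step and of deferring the final parameter choice to the cumulant analysis matches what the paper does implicitly.
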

\begin{proof}
    First, note that to prove this proposition, it is enough to show that 
    $$ \|H_M^{(\varepsilon,L)} - H_M\|_1 \rightarrow 0  \quad \text{as} \,\, M \rightarrow \infty.$$
    \begin{eqnarray} 
        \|H_M^{(\varepsilon,L)} - H_M\|_1 &=& \left\|\frac{1}{\sqrt{M}} \left( \sum_{k=0}^{M-1}   \left(\hat{f_{\varepsilon}}^{(L)} -\hat{\chi}\right)\circ c^k - M \int_{X_a} \left(\hat{f_{\varepsilon}}^{(L)} - \hat{\chi}\right) \,\,  d\mu_a \right)\right\|_1 \nonumber \\ &\leq& \frac{1}{\sqrt{M}} \sum_{k=0}^{M-1} \left\|\left(\hat{f_{\varepsilon}}^{(L)} -\hat{\chi}\right)\circ c^k \right\|_1 + \sqrt{M} \left\| \hat{f_{\varepsilon}}^{(L)} -\hat{\chi} \right\|_1 \nonumber \\ & =  & 2 \sqrt{M} \left\|   \hat{f_{\varepsilon}}^{(L)} -\hat{\chi}  \right\|_1, \,\, \text{by using}\,\, G_{a}\text{-invariance of} \,\, \mu \nonumber \\ & \leq & 2\sqrt{M} \left(\left\| \hat{f_{\varepsilon}} - \hat{f_{\varepsilon}}^{(L)} \right\|_1 + 
  \left\|\hat{f_{\varepsilon}} - \hat{\chi}\right\|_1 \right) \nonumber \\ &\ll& \sqrt{M}(L^{-(l-2)/2} + \varepsilon),
    \end{eqnarray}
by using \eqref{equ:chi_f_epsilon}, Lemma \ref{lem:tranc_Siegel_property}, the fact that $f_{\varepsilon} \leq 1$ and the family $\{\supp \,\, f_{\varepsilon}\}_{\varepsilon}$ is uniformly bounded.  

To get our desired result we choose the parameters $\varepsilon$ and  $L$ as functions of $M$ such that as $M \rightarrow \infty$,
\begin{equation}\label{equ:epsilon_L}
 \varepsilon =   o(M^{-1/2}) \quad \text{and} \quad M=o(L^{l-2}).
\end{equation}

\end{proof}

\begin{lemma}\label{lem:H_epsilon_L-H_2}
    $\|H_M^{(\varepsilon,L)} - H_M\|_2 \rightarrow 0$ as $M \rightarrow \infty.$
\end{lemma}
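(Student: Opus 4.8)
The plan is to reduce the statement, via a crude triangle inequality together with the invariance of $\mu_a$, to a single $L^2$-estimate for the approximation error $\phi:=\hat{f_\varepsilon}^{(L)}-\hat\chi$, and then to finish by sharpening the choice of the parameters $\varepsilon=\varepsilon(M)$ and $L=L(M)$ beyond \eqref{equ:epsilon_L}. Exactly as in Proposition \ref{prop:H_epsilon_L}, I would write
\[ H_M^{(\varepsilon,L)}-H_M=\frac{1}{\sqrt M}\sum_{k=0}^{M-1}\bigl(\phi\circ c^k-\bar\phi\bigr),\qquad \bar\phi:=\int_{X_a}\phi\,d\mu_a . \]
Applying the triangle inequality in $L^2(X_a,\mu_a)$ and using that $c\in G_a$ together with the $G_a$-invariance of $\mu_a$ (so that $\|\phi\circ c^k-\bar\phi\|_2=\|\phi-\bar\phi\|_2\le\|\phi\|_2$ for every $k$), one gets
\[ \bigl\|H_M^{(\varepsilon,L)}-H_M\bigr\|_2\le\frac{1}{\sqrt M}\sum_{k=0}^{M-1}\|\phi-\bar\phi\|_2\le\sqrt M\,\|\phi\|_2 . \]
Thus it suffices to prove $\sqrt M\,\|\phi\|_2\to 0$.

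The heart of the matter is the $L^2$-bound for $\phi$, for which the second moment formula of Proposition \ref{prop:Siegel_affine} is needed (the mean value formula alone only controls $\|\cdot\|_1$). Split $\phi=\bigl(\hat{f_\varepsilon}^{(L)}-\hat{f_\varepsilon}\bigr)+\bigl(\hat{f_\varepsilon}-\hat\chi\bigr)$. The first summand is handled by the fourth estimate of Lemma \ref{lem:tranc_Siegel_property}: for any $p<m+n-1$, using $\|f_\varepsilon\|_\infty\le 1$, one has $\bigl\|\hat{f_\varepsilon}-\hat{f_\varepsilon}^{(L)}\bigr\|_2\ll L^{-(p-1)/2}$. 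For the second summand, $f_\varepsilon-\chi\ge 0$ by \eqref{equ:chi_f_epsilon}, so by linearity of the Siegel transform and \eqref{equ:23},
\[ \bigl\|\hat{f_\varepsilon}-\hat\chi\bigr\|_2^2=\Bigl(\int_{\mathbb R^l}(f_\varepsilon-\chi)\Bigr)^2+\int_{\mathbb R^l}(f_\varepsilon-\chi)^2=\|f_\varepsilon-\chi\|_1^2+\|f_\varepsilon-\chi\|_2^2\ll\varepsilon^2+\varepsilon\ll\varepsilon, \]
the last step again by \eqref{equ:chi_f_epsilon}. Combining, $\|\phi\|_2\ll L^{-(p-1)/2}+\varepsilon^{1/2}$, whence
\[ \bigl\|H_M^{(\varepsilon,L)}-H_M\bigr\|_2\ll\sqrt M\,L^{-(p-1)/2}+\sqrt{M\varepsilon}. \]

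To conclude I would fix some $p\in(1,m+n-1)$ (available since $m+n\ge 5$) and observe that the right-hand side tends to $0$ provided $M L^{-(p-1)}\to 0$ and $M\varepsilon\to 0$; both conditions strengthen, and are compatible with, \eqref{equ:epsilon_L}, e.g.\ by taking $\varepsilon=o(M^{-1})$ and $L$ equal to a sufficiently large fixed power of $M$. I expect the only genuine subtlety to be this $L^2$ loss: because $f_\varepsilon$ and $\chi$ differ on a boundary layer of volume $\asymp\varepsilon$, the second moment formula yields only $\|\hat{f_\varepsilon}-\hat\chi\|_2\ll\varepsilon^{1/2}$, in contrast to the $L^1$-bound $\ll\varepsilon$ used in Proposition \ref{prop:H_epsilon_L}; this forces the stronger requirement $\varepsilon=o(M^{-1})$ and a faster growth of $L$. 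One might hope to avoid enlarging $L$ by estimating the off-diagonal correlations $\operatorname{Cov}(\phi\circ c^j,\phi)$, $j\ge 1$, through the quantitative mixing of Theorem \ref{thm:quan_mixing}; however, mixing applies only to the smooth truncation $\hat{f_\varepsilon}^{(L)}$, and the cross-correlations between it and the non-smooth $\hat\chi$ do not decay in $j$, so the crude bound above is essentially the efficient one. The remaining point is merely to check that this parameter regime stays consistent with the constraints needed later when verifying the Fr\'echet--Shohat criterion.
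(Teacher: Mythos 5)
Your proof is correct and follows essentially the same route as the paper: the triangle inequality plus $G_a$-invariance reduces everything to $\sqrt{M}\,\|\hat{f_\varepsilon}^{(L)}-\hat\chi\|_2$, which is then split into the truncation error (Lemma \ref{lem:tranc_Siegel_property}) and the smoothing error (Rogers' second moment formula \eqref{equ:23} applied to $f_\varepsilon-\chi$, giving the $\sqrt\varepsilon$ loss), and closed by taking $\varepsilon=o(M^{-1})$ and $L$ a suitable power of $M$, exactly the regime \eqref{equ:epsilon_L_i} used in the paper. Your observation that the $L^2$ loss forces the stronger constraint $\varepsilon=o(M^{-1})$ matches the paper's choice precisely.
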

\begin{proof}
   By computations analogous to the proof of Proposition \ref{prop:H_epsilon_L}, we get
    \begin{eqnarray}
        \|H_M^{(\varepsilon,L)} - H_M\|_2 & \leq & \sqrt{M} \left( \left\| \hat{f_{\varepsilon}}^{(L)} -\hat{\chi} \right\|_2 + \left\| \hat{f_{\varepsilon}}^{(L)} -\hat{\chi} \right\|_1\right) \nonumber\\ &\leq & \sqrt{M} \left(\left\| \hat{f_{\varepsilon}} - \hat{f_{\varepsilon}}^{(L)} \right\|_2 + 
  \left\|\hat{f_{\varepsilon}} - \hat{\chi}\right\|_2 +\left\| \hat{f_{\varepsilon}} - \hat{f_{\varepsilon}}^{(L)} \right\|_1+ 
  \left\|\hat{f_{\varepsilon}} - \hat{\chi}\right\|_1 \right) \nonumber \\ &\ll & \sqrt{M} \left(L^{-(l-3)/2} + \left({\left\|f_{\varepsilon}-\chi\right\|_1}^2 + {\|f_{\varepsilon}- \chi\|_2}^2\right)^{1/2}+ L^{-(l-2)}+\varepsilon \right) \nonumber \\ &\ll&  \sqrt{M} \left(L^{-(l-3)/2} + \left(\varepsilon^2 +{\sqrt{\varepsilon}}^2\right)^{1/2}+ L^{-(l-2)}+\varepsilon \right) \nonumber \\ &\ll& \sqrt{M} \left(L^{-(l-3)/2} + \varepsilon +\sqrt{\varepsilon} + L^{-(l-2)}+\varepsilon \right) \nonumber,
    \end{eqnarray}
    where in the third line we have used Rogers's second moment formula, Lemma \ref{lem:tranc_Siegel_property} and \eqref{equ:chi_f_epsilon}. Beyond that, repeated use of Lemma \ref{lem:tranc_Siegel_property} and \eqref{equ:chi_f_epsilon} gives us the above inequality.

    Given the above inequality if we choose the parameters $\varepsilon$ and $L$ such that 
    \begin{equation}\label{equ:epsilon_L_i}
\varepsilon=o(M^{-1}) \quad and \quad M=o(L^{l-3}),
    \end{equation}
    then $\|H_M^{(\varepsilon,L)} - H_M\|_2 \rightarrow 0$ as $M \rightarrow \infty.$
    Note that \eqref{equ:epsilon_L_i} imply \eqref{equ:epsilon_L}. Hence, we choose the parameters $\varepsilon$ and $L$ satisfying \eqref{equ:epsilon_L_i} at the end of this section after considering all other things. 
\end{proof}
In view of the Proposition (\ref{prop:H_epsilon_L}), Theorem \ref{thm:affine_reduc} (hence Theorem \ref{thm:main_affine}) is equivalent to showing central limit theorem for  $H_M^{(\varepsilon,L)}$.
\begin{theorem}\label{thm:H_epsilon_L_CLT}
    Under the hypothesis of Theorem \ref{thm:main_affine},
    \begin{equation*}
        H_M^{(\varepsilon,L)} \rightarrow N(0,\eta^2) \,\, \text{in distribution as} \,\, M \rightarrow \infty,
    \end{equation*}
    where $\eta=\vol(\Omega_2)^{1/2}$.
\end{theorem}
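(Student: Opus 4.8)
The plan is to verify the three hypotheses of the Fr\'echet--Shohat criterion (Theorem \ref{thm:CLT_crit}) for the sequence $\Psi_M = H_M^{(\varepsilon,L)}$, which, unlike $H_M$ itself, is made of bounded smooth functions since $\hat{f_\varepsilon}^{(L)} \in C_c^{\infty}(X_a)$. Writing $\psi = \hat{f_\varepsilon}^{(L)}$ and $\varphi = \psi - \int_{X_a}\psi\,d\mu_a$, so that $\Psi_M = M^{-1/2}\sum_{k=0}^{M-1}\varphi\circ c^k$, the vanishing of the mean $\int_{X_a}\Psi_M\,d\mu_a = 0$ is immediate from the $G_a$-invariance of $\mu_a$, which makes $\int_{X_a}\psi\circ c^k\,d\mu_a$ independent of $k$.

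For the variance I would bypass the approximation and compute $\int_{X_a}H_M^2\,d\mu_a$ exactly. Expanding the square and using invariance to reduce the double sum to correlations $\int_{X_a}\hat\chi\,(\hat\chi\circ c^j)\,d\mu_a$, the polarised form of Roger's second moment formula \eqref{equ:23} gives $\int_{X_a}\hat\chi\,(\hat\chi\circ c^j)\,d\mu_a = \vol(\Omega_2)^2 + \vol(\Omega_2\cap c_0^{-j}\Omega_2)$, since $c_0\in\SL_l(\mathbb R)$ preserves Lebesgue measure. The tessellation \eqref{equ:26} forces $\vol(\Omega_2\cap c_0^{-j}\Omega_2)=0$ for every $j\geq 1$ (the tiles $c_0^{-k}\Omega_2$ are pairwise disjoint), so all off-diagonal terms vanish and $\int_{X_a}H_M^2\,d\mu_a = \int_{\mathbb R^l}\chi^2\,d\mathbf x = \vol(\Omega_2)=\eta^2$ for every $M$. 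Transferring this to $\Psi_M$ through Lemma \ref{lem:H_epsilon_L-H_2} (which gives $\|\Psi_M - H_M\|_2\to 0$) yields $\int_{X_a}\Psi_M^2\,d\mu_a \to \eta^2$, establishing the second hypothesis with $\sigma^2=\eta^2$.

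The core of the argument is the third hypothesis, $\text{Cum}_{[r]}(\Psi_M)\to 0$ for $r\geq 3$. By multilinearity, $\text{Cum}_{[r]}(\Psi_M)=M^{-r/2}\sum_{\mathbf k}\text{Cum}_{[r]}(\psi\circ c^{k_1},\dots,\psi\circ c^{k_r})$, the sum running over $\mathbf k=(k_1,\dots,k_r)\in\{0,\dots,M-1\}^r$. For each $\mathbf k$ I would split $\{1,\dots,r\}$ at the largest gap of the sorted indices into a two-block partition $\mathcal Q$; by the combinatorial identity of \cite{BG1'} the joint cumulant differs from the conditional cumulant $\text{Cum}_{[r]}(\cdots\mid\mathcal Q)$, which vanishes by Proposition \ref{prop:condi_cumu}, by an error governed by the quantitative mixing estimate (Theorem \ref{thm:quan_mixing}). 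Converting index gaps into distances in $G_a$ via Lemma \ref{lem:operator_norm} and using $S_q(\psi)\ll\varepsilon^{-q}L$ from Lemma \ref{lem:tranc_Siegel_property}, this error is $\ll_r e^{-\delta' b(\mathbf k)}(\varepsilon^{-q}L)^r$, where $b(\mathbf k)$ is the largest gap and $\delta'>0$. Summing, the tuples with $b(\mathbf k)$ exceeding a threshold $W\asymp\log M$ contribute a negligible amount once $W$ is taken with a large enough constant, since the exponential decay beats the polynomial growth of $(\varepsilon^{-q}L)^r$.

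It remains to control the clustered tuples, those of spread $O(\log M)$, of which there are only $O(M(\log M)^{r-1})$. For orders $3\leq r<l$ the joint cumulants $\text{Cum}_{[r]}(\psi\circ c^{k_1},\dots,\psi\circ c^{k_r})$ are bounded uniformly in $\varepsilon,L$ and $\mathbf k$, because H\"older's inequality together with the uniform $L^r$-bounds on $\hat{f_\varepsilon}^{(L)}\le\hat{f_\varepsilon}$ coming from Propositions \ref{prop:Siegel_alpha_bound_affine} and \ref{prop:alpha_Lp_affine} keeps every moment finite; hence this part is $O(M^{1-r/2}(\log M)^{r-1})\to 0$. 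For $r\geq l$ one must instead use the truncation bound $\|\hat{f_\varepsilon}^{(L)}\|_\infty\ll L$ and, crucially, the tessellation once more: a lattice point cannot lie in two distinct tiles $c_0^{-k_i}\Omega_2$, so the diagonal contributions to the clustered multi-correlations are suppressed and the surviving terms carry enough powers of $\varepsilon$ (the $\varepsilon$-collars of disjoint tiles) to be absorbed. The main obstacle is precisely this final balancing act: one must choose $\varepsilon(M)$ and $L(M)$ as powers of $M$ satisfying simultaneously the $L^2$-approximation constraint \eqref{equ:epsilon_L_i} (which forces $\varepsilon$ small and $L$ large) and the upper bounds on $L$ needed to kill the clustered cumulants of all orders $r\geq 3$; verifying that the hypothesis $m+n\geq 5$ leaves such a choice available is the delicate heart of the proof. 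Once the parameters are fixed, Theorem \ref{thm:CLT_crit} applies and gives $H_M^{(\varepsilon,L)}\to N(0,\eta^2)$ in distribution.
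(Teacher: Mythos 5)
Your overall architecture — Fr\'echet--Shohat applied to the bounded functions $H_M^{(\varepsilon,L)}$, variance computed exactly for $H_M$ via the polarised Roger formula and the disjointness of the tiles $c_0^{-k}\Omega_2$, then transferred through Lemma \ref{lem:H_epsilon_L-H_2}, and cumulants killed by combining the conditional-cumulant vanishing of Proposition \ref{prop:condi_cumu} with the mixing estimate of Theorem \ref{thm:quan_mixing} — is exactly the paper's. The variance part of your argument is complete and matches the paper. The gap is in the cumulant decomposition. You propose to split each tuple $\mathbf k$ into a \emph{two-block} partition at the largest gap $b(\mathbf k)$ of the sorted indices and to bound the error by $e^{-\delta' b(\mathbf k)}(\varepsilon^{-q}L)^r$. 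That bound is not what the mixing estimate gives. To apply Theorem \ref{thm:quan_mixing} you must first collapse each block $I_j$ into a single translate of the product $\prod_{i\in I_j}\mathbf f\circ c^{k_i-k_{I_j}}$, and by Lemma \ref{lem:pro_sob_norm} the Sobolev norm of that product grows like $e^{q\tau\cdot(\text{internal spread of }I_j)}$. With only the largest gap controlled, the internal spread of a block can be as large as $(r-2)\,b(\mathbf k)$, so the true error is of size $e^{(qr(r-2)\tau-\delta)b(\mathbf k)}S_q(\mathbf f)^r$, which diverges rather than decays. This is precisely why the paper (following \cite{BG1'}) uses the multi-scale decomposition of Proposition \ref{prop:decom_R}, with a hierarchy $\alpha_j=(3+r)\beta_j\ll\beta_{j+1}$ chosen so that every tuple lands either in $\Delta(\beta_r)$ or in some $\Delta_{\mathcal Q}(\alpha_j,\beta_{j+1})$ where blocks are internally clustered at scale $\alpha_j$ but mutually separated at the much larger scale $\beta_{j+1}$, guaranteeing $\delta'\beta_{j+1}-qr\tau\alpha_j\geq\delta'\eta>0$. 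Your single-threshold split cannot be repaired by just taking $W$ large; the hierarchy is essential.

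A secondary point: for the clustered tuples with $r\geq l$ you appeal to the tessellation ``suppressing diagonal contributions'' and to ``powers of $\varepsilon$ from the collars.'' Neither mechanism is used, nor needed. The paper simply applies the generalized H\"older inequality to $l$ of the factors (each in $L^l(X_a)$ uniformly, by Propositions \ref{prop:Siegel_alpha_bound_affine} and \ref{prop:alpha_Lp_affine}) and bounds the remaining $r-l$ factors by $\|\hat{f_\varepsilon}^{(L)}\|_\infty\ll L$, giving a contribution $\ll M^{1-r/2}(\log M)^{r-1}L^{(r-l)^+}$; the condition $m+n\geq 5$ enters only in checking that the exponent $d$ in $L=M^d$ can satisfy both $d<\tfrac{r-2}{2(r-l)}$ and the approximation constraint $d(l-3)>1$. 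You correctly identified this balancing act as the crux, but the justification you sketch for the $r\geq l$ case would need to be replaced by the H\"older argument.
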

To prove Theorem \ref{thm:H_epsilon_L_CLT}, we use the CLT criteria of Fr\'{e}chet and Shohat. By using Theorem \ref{thm:CLT_crit}, we immediately get Theorem \ref{thm:H_epsilon_L_CLT} if we can prove that for some choice of parameters $\varepsilon$ and $L$ 

\begin{equation}\label{equ:affine_CLT_condition_var}
\sigma^2 :=\lim_{M \rightarrow \infty} \|H_M^{(\varepsilon,L)}\|_2^2 < \infty
\end{equation}

\begin{equation}\label{equ:affine_CLT_condition_cum}
\lim_{M \rightarrow \infty} \text{Cum}_{[r]} \left(H_M^{(\varepsilon,L)} \right)=0 \quad \text{for all} \,\, r \geq 3.
\end{equation}

\noindent \textbf{Computation of variance:} First note that in view of the triangle inequality, we have
\begin{equation}\label{equ:H_M_H_M_epsilon_L}
    \|H_M\|_2 -\|H_M - H_M^{(\varepsilon,L)}\|_2 \leq \|H_M^{(\varepsilon,L)}\|_2 \leq \|H_M\|_2 + \|H_M - H_M^{(\varepsilon,L)}\|_2 ,
\end{equation}
where $H_M$ is given by \eqref{equ:29}. Hence to show \eqref{equ:affine_CLT_condition_var} it is enough to show that  

\begin{equation}\label{equ:lim_H_M}
    \lim_{M \rightarrow \infty} \|H_M\|_2^2 < \infty
\end{equation}
and 
\begin{equation}\label{equ:H_M-H_M_epsilon_L}
    \|H_M - H_M^{(\varepsilon,L)}\|_2 \rightarrow 0.
\end{equation}
Also if we can show \eqref{equ:lim_H_M} and \eqref{equ:H_M-H_M_epsilon_L}, we get the expression of variance as 
\begin{equation}
    \sigma^2 = \lim_{M \rightarrow \infty} \|H_M^{(\varepsilon,L)}\|_2^2 =  \lim_{M \rightarrow \infty} \|H_M\|_2^2.
\end{equation}
Recall that \begin{equation}
H_M=\frac{1}{\sqrt{M}}   \sum_{k=0}^{M-1} (\hat{\chi} \circ c^k- \vol(\Omega_2))= \frac{1}{\sqrt{M}}   \sum_{k=0}^{M-1} \phi_k,
\end{equation}
where $\phi_k=\hat{\chi} \circ c^k -   \vol(\Omega_2).$

\begin{eqnarray}\label{equ:series_affine}
    \|H_M\|_2^2 =\frac{1}{M} \sum_{k_1,k_2=0}^{M-1} \int_{X_a} \phi_{k_1} \phi_{k_2} \,\, d\mu_a &=& \frac{1}{M} \sum_{k_1,k_2=0}^{M-1} \int_{X_a} \phi_{k_1-k_2} \phi_{0} \,\, d\mu_a \nonumber \\ & = &  \frac{1}{M} \sum_{\pm k=0}^{M-1} (M -|k|) \int_{X_a} \phi_{k} \phi_{0} \,\, d\mu_a \nonumber \\ &=&  \sum_{k = -\infty}^{\infty} \chi_{B_M} \left(1 -\frac{|k|}{M} \right) \int_{X_a} \phi_{k} \phi_{0} \,\, d\mu_a,
\end{eqnarray}
where $B_M:=\{k \in \mathbb Z : |k| \leq M-1\}.$ Now, by using the dominated convergence theorem, we get that
\begin{equation}
\lim_{M \rightarrow \infty} \|H_M\|_2^2 =\sum_{k=-\infty}^{\infty} \int_{X_a} \phi_{k} \phi_{0} \,\, d\mu_a 
\end{equation}
Rogers's second moment formula gives
\begin{eqnarray}
 \int_{X_a} \phi_{k} \phi_{0} \,\, d\mu_a &=&  \int_{X_a} \hat{\chi} \cdot (\hat{\chi} \circ c^k) \,\, d\mu_a - \vol(\Omega_2)^2 \nonumber \\ \nonumber &=&  \frac{1}{2} \left(  
\int_{X_a}(\hat{\chi}+\hat{\chi} \circ c^k)^2 \,\, d\mu_a -2\int_{X_a} \hat{\chi}^2 \,\, d\mu_a - 2 \vol(\Omega_2)^2  \right) \\ \nonumber &=& \frac{1}{2} \left( \left(\int_{\mathbb R^l} (\chi + \chi \circ c^k) \,\, \right)^2 + \int_{\mathbb R^l} (\chi + \chi \circ c^k)^2 \,\, -2 \left(\left(\int_{\mathbb R^l} \chi \right)^2 + \int_{\mathbb R^l} \chi^2 \,\, \right) -2 \vol(\Omega_2)^2 \right) \nonumber \\ &=& \int_{\mathbb R^l} \chi \cdot (\chi \circ c^k) \,\, d \mathbf x \nonumber \\ &=& \vol(\Omega_2 \cap c^{-k} \Omega_2).
\end{eqnarray}
Hence 
\begin{equation}
\lim_{M \rightarrow \infty} \|H_M\|_2^2 =\sum_{k=-\infty}^{\infty} \vol(\Omega_2 \cap c^{-k} \Omega_2)=\vol(\Omega_2)< \infty,
\end{equation}
since $\vol(\Omega_2 \cap c^{-k} \Omega_2)=0$ for all nonzero integer $k.$\\

\noindent \textbf{Computation of cumulants:} 

First, let us write 
\begin{equation}\label{equ:H_epsilon_L_psi}
       H_M^{(\varepsilon,L)}:=\frac{1}{\sqrt{M}} \sum_{k=0}^{M-1}   \psi_k,
   \end{equation}
where $\psi_k=\hat{f_{\varepsilon}}^{(L)} \circ c^k - \mu_a \left( \hat{f_{\varepsilon}}^{(L)} \right)$ and $\mu_a \left( \hat{f_{\varepsilon}}^{(L)} \right)=\displaystyle \int_{X_a} \hat{f_{\varepsilon}}^{(L)} \,\, d\mu_a.$

\noindent By the multilinearity of $\text{cum}_{[r]}$ we get that
\begin{equation}\label{equ:cum_H_M_epsilon_L}
\text{cum}_{[r]} \left(H_M^{(\varepsilon,L)} \right)=\frac{1}{M^{r/2}} \sum_{k_1,\dots,k_r=0}^{M-1} \text{cum}_{[r]}(\psi_{k_1},\dots,\psi_{k_r})
\end{equation}
Now we want to decompose the sum of \eqref{equ:cum_H_M_epsilon_L} into sub-sums according to our convenience following \cite[Proposition 6.2  ]{BG1'} and  \cite[Eq. (3.8)]{BG1}.

\begin{proposition}\label{prop:decom_R}
    Let $r \in \mathbb N$ with $r \geq 3.$ Let $0 \leq \alpha < \beta$ and a partition $\mathcal Q$ of $\{1,\dots,r\}$ be given, and $\mathbf{k}=(k_1,\dots,k_r) \in \mathbb R^r_+$. We define
    \begin{equation*}
        \Delta(\alpha) := \{\mathbf{k} \in \mathbb R^r_+: |k_i -k_j| \leq \alpha \,\, \forall i,j\}
    \end{equation*}
    and 
    \begin{equation*}
\Delta_{\mathcal Q}(\alpha,\beta)=\left\{\mathbf{k} \in \mathbb R^r_+: \max_{I \in \mathcal Q} \max_{i,j \in I} \{|k_i -k_j|\} \leq \alpha \,\, \text{and} \,\, \min_{I,J \in \mathcal Q, I \neq J} \min_{i \in I,j\in J} \{|k_i -k_j|\}>\beta \right\}.
    \end{equation*}
    Then, given $0=\alpha_0 < \beta_1 < \alpha_1=(3+r)\beta_1 <\beta_2 < \dots < \beta_r < \alpha_{r-1}=(3+r) \beta_{r-1} < \beta_{r}$, we have
\begin{equation*}
    \mathbb R^{r}_+=\Delta(\beta_r) \cup \left(\bigcup_{j=0}^{r-1} \bigcup_{ |\mathcal Q| \geq 2} \Delta_{\mathcal Q} (\alpha_j,\beta_{j+1})\right).
\end{equation*}    
Intersecting with $\{0,1,\dots,M-1\}^r,$ we get that 
\begin{equation*}
    \{0,1,\dots,M-1\}^r=\Omega(\beta_r,M)  \cup \left(\bigcup_{j=0}^{r-1} \bigcup_{ |\mathcal Q| \geq 2} \Omega_{\mathcal Q} (\alpha_j,\beta_{j+1},M)\right),
\end{equation*}
where  
\begin{equation*}
\Omega(\beta_r,M):= \{0,1,\dots,M-1\}^r \cap \Delta(\beta_r),
\end{equation*}
and
\begin{equation*}
    \Omega_{\mathcal Q} (\alpha_j,\beta_{j+1},M):= \{0,1,\dots,M-1\}^r \cap \Delta_{\mathcal Q}(\alpha_j,\beta_{j+1}).
\end{equation*}
\end{proposition}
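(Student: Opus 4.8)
The plan is to show, for each point $\mathbf{k}=(k_1,\dots,k_r)\in\mathbb{R}^r_+$, that at a suitable scale its coordinates either collapse into one tight cluster (placing $\mathbf{k}$ in $\Delta(\beta_r)$) or split into a well-separated partition $\mathcal{Q}$ with $|\mathcal{Q}|\ge 2$ (placing $\mathbf{k}$ in some $\Delta_{\mathcal{Q}}(\alpha_j,\beta_{j+1})$). The inclusion of the right-hand side in $\mathbb{R}^r_+$ is trivial, so only the inclusion $\mathbb{R}^r_+\subseteq \Delta(\beta_r)\cup\bigcup_{j}\bigcup_{|\mathcal{Q}|\ge2}\Delta_{\mathcal{Q}}(\alpha_j,\beta_{j+1})$ must be proved; intersecting the resulting covering with $\{0,\dots,M-1\}^r$ then yields the discrete statement verbatim.

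The tool I would use is single-linkage clustering. For $t\ge 0$ let $\mathcal{P}_t$ denote the partition of $\{1,\dots,r\}$ into connected components of the graph on $\{1,\dots,r\}$ with an edge between $i,j$ whenever $|k_i-k_j|\le t$. Two structural facts drive the argument. First, as $t$ grows the partitions only coarsen, so $\mathcal{P}_0,\mathcal{P}_{\beta_1},\dots,\mathcal{P}_{\beta_r}$ form a refinement chain. Second, if $I\ne J$ are distinct blocks of $\mathcal{P}_t$ then every cross distance exceeds $t$, i.e. $\min_{i\in I,\,j\in J}|k_i-k_j|>t$, since otherwise the two blocks would be joined. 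The quantitative heart is the diameter bound in $\mathbb{R}$: a block of $\mathcal{P}_t$ is exactly a maximal run of sorted coordinates with all consecutive gaps $\le t$, and having at most $r-1$ such gaps each block has diameter $\le (r-1)t$.

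Next I would pigeonhole on the block counts $b_j:=|\mathcal{P}_{\beta_j}|$ for $j=0,1,\dots,r-1$ (with the convention $\mathcal{P}_{\beta_0}:=\mathcal{P}_0$, which groups equal coordinates), which are non-increasing integers in $\{1,\dots,r\}$. I split into two cases. If $b_{r-1}=1$, all coordinates lie in one block of $\mathcal{P}_{\beta_{r-1}}$, so by the diameter bound the total diameter is $\le (r-1)\beta_{r-1}\le (3+r)\beta_{r-1}=\alpha_{r-1}<\beta_r$, whence $\mathbf{k}\in\Delta(\beta_r)$. If $b_{r-1}\ge 2$, then $b_0,\dots,b_{r-1}$ are $r$ non-increasing integers confined to the $r-1$ values $\{2,\dots,r\}$, so two consecutive ones agree: $b_j=b_{j+1}$ for some $j\in\{0,\dots,r-2\}$. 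A coarsening preserving the block count is trivial, so $\mathcal{P}_{\beta_j}=\mathcal{P}_{\beta_{j+1}}=:\mathcal{Q}$ with $|\mathcal{Q}|\ge 2$. I then read off membership in $\Delta_{\mathcal{Q}}(\alpha_j,\beta_{j+1})$: each block is a block of $\mathcal{P}_{\beta_j}$, hence has diameter $\le (r-1)\beta_j\le(3+r)\beta_j=\alpha_j$ (diameter $0=\alpha_0$ when $j=0$), while the second structural fact at scale $\beta_{j+1}$ gives all cross distances $>\beta_{j+1}$. This is precisely $\mathbf{k}\in\Delta_{\mathcal{Q}}(\alpha_j,\beta_{j+1})$, so the covering holds.

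The main obstacle is the scale mismatch: single-linkage controls a block's diameter only by the chained bound $(r-1)t$, not by $t$ itself, so one cannot cluster at scale $\alpha_j$ and hope to read off intra-block diameter $\le\alpha_j$. The resolution is to cluster at the smaller scale $\beta_j$ and absorb the factor, using exactly the inequality $r-1\le 3+r$ encoded in $\alpha_j=(3+r)\beta_j$; the same inflation makes the collapse case $b_{r-1}=1$ fall inside $\Delta(\beta_r)$ via $(r-1)\beta_{r-1}<\beta_r$. The only delicate bookkeeping is ensuring the pigeonhole lands on a repeated block count that is $\ge 2$ rather than the trivial value $1$, which is guaranteed by peeling off the case $b_{r-1}=1$ before applying it.
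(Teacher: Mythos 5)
Your argument is correct. Note that the paper itself offers no proof of this proposition: it is imported verbatim from Bj\"orklund--Gorodnik (Proposition 6.2 of the cited work on exponentially mixing actions), so there is nothing internal to compare against. Your single-linkage clustering at the scales $0,\beta_1,\dots,\beta_{r-1}$, combined with the pigeonhole on the non-increasing block counts and the chained diameter bound $(r-1)t\le(3+r)t$, is essentially the standard argument behind that cited result, and all the delicate points are handled properly: the cross-block separation $>\beta_{j+1}$ follows from maximality of connected components at scale $\beta_{j+1}$, the intra-block diameter $\le\alpha_j$ follows from clustering at the finer scale $\beta_j$, and peeling off the case $b_{r-1}=1$ first guarantees the repeated block count is at least $2$.
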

 To estimate the cumulant \eqref{equ:cum_H_M_epsilon_L}, the strategy involves  separately estimating the sums over  $\Omega(\beta_r,M)$ and 
$\Omega_{\mathcal Q} (\alpha_j,\beta_{j+1},M)$. After accounting for all other factors, the sequences $\{\alpha_j\}$ and $\{\beta_j\}$ are chosen at the final step. \\

\noindent \textbf{Case: 1 Summing over }$\mathbf{\Omega(\beta_r,M)}.$

\noindent Suppose the index $\mathbf{k}=(k_1,\dots,k_r)$ runs over ${\Omega(\beta_r,M)}.$ Then
\begin{equation*}
    |k_i -k_j| \leq \beta_r \,\, \text{for all} \,\, i,j.
\end{equation*}
Given this, we have
\begin{equation*}
     |\Omega(\beta_r,M)| \ll M\beta_r^{r-1}
\end{equation*}
We claim that 
\begin{equation}\label{equ:cumm_case_i}
\frac{1}{M^{r/2}} \sum_{(k_1,\dots,k_r) \in \Omega(\beta_r,M)} |\text{cum}_{[r]}(\psi_{k_1},\dots,\psi_{k_r})| \ll_r M^{1-r/2} \beta_r^{r-1} \left\| \hat{f_{\varepsilon}}^{(L)} \right\|^{(r-l)^+}_{\infty}
\end{equation}
where $(r-l)^+:=\max \{0,r-l\}.$ 

To prove \eqref{equ:cumm_case_i}, it is enough to show that 
\begin{equation}\label{equ:cumm_case_i_redu}
    \int_{X_a} |\psi_{k_1} \dots \psi_{k_r}| \,\, d\mu_a \ll_{r} \left\| \hat{f_{\varepsilon}}^{(L)} \right\|^{(r-l)^+}_{\infty}
\end{equation}
First, consider the case $r < l$. In this case we apply the generalized H$\ddot{\text{o}}$lder inequality to the $l-1$ functions $\psi_{k_1},\dots,\psi_{k_r},1,\dots,1$ to get
\begin{equation}\label{equ:hold}
\int_{X_a} |\psi_{k_1} \dots \psi_{k_r}| \,\, d\mu_a \ll \|\psi_{k_1}\|_{(l-1)} \dots \|\psi_{k_r}\|_{(l-1)} 
\end{equation}
Now for any $k$
\begin{eqnarray*}
\|\psi_k\|_{(l-1)} &\leq &\left\|\hat{f_{\varepsilon}}^{(L)}\right\| _{(l-1)} + \mu_a \left(\hat{f_{\varepsilon}}^{(L)} \right), \,\, \text{by using $G_{a}$-invariance of $\mu_a$}  \\ & \leq & \|\alpha_{l,a}\|_{(l-1)} + \|f_{\varepsilon}\|_1, \,\, \text{by Proposition \ref{prop:Siegel_alpha_bound_affine}}  \\ &\leq &  \|\alpha_{l,a}\|_{(l-1)} + \|f_{\varepsilon} - \chi\|_1 + \|\chi\|_1 \\ & \ll& \|\alpha_{l,a}\|_{(l-1)} + \varepsilon + \vol(\Omega_2). 
\end{eqnarray*}
Combining the above estimate with  \eqref{equ:hold}, we get 

\begin{equation}\label{equ:psi_k_1_r}
    \int_{X_a} |\psi_{k_1} \dots \psi_{k_r}| \,\, d\mu_a \ll 1.
\end{equation}
This shows that \eqref{equ:cumm_case_i_redu} holds if $r <l.$ Next we consider the case when $r\geq l.$ In this case
\begin{eqnarray}
     \int_{X_a} |\psi_{k_1} \dots \psi_{k_{l-1}} \psi_{k_l}\psi_{k_{l+1}} \dots \psi_{k_r}| \,\, d\mu_a  & \leq & \|\psi_{k_{l}} \dots \psi_{k_r}\|_{\infty} \int_{X_a} |\psi_{k_1} \dots \psi_{k_{l-1}}| \,\, d\mu_a \nonumber \\ &\leq& 2^{r-l} \left\|\hat{f_{\varepsilon}}^{(L)}\right\|_{\infty}^{r-l} \|\psi_{k_1}\|_{(l-1)} \dots \|\psi_{k_r}\|_{(l-1)} \nonumber \\ &\ll&  2^{r-l} \left\|\hat{f_{\varepsilon}}^{(L)}\right\|_{\infty}^{r-l}, \,\, \text{using above estimate for $\|\psi_k\|_{(l-1)}$} \nonumber \\ &\ll& \left\|\hat{f_{\varepsilon}}^{(L)}\right\|_{\infty}^{r-l} \nonumber.
\end{eqnarray}
Therefore \eqref{equ:cumm_case_i_redu} holds and hence we get our desired result.\\

\noindent \textbf{Case: 2 Summing over }$\mathbf{\Omega_{\mathcal Q}(\alpha_j,\beta_{j+1},M)}$ \textbf{with} $\mathbf{  |\mathcal Q| \geq 2}.$

\noindent Suppose that $\mathcal Q=\{J_1,\dots,J_d\}$ with $d \geq 2.$ Given an arbitrary subset $I$ of $\{1,\dots,r\}$ first we want to show that 

\begin{equation}\label{equ:coupling}
    \int_{X_a} \prod_{i \in I} \psi_{k_i} \,\, d\mu_a 
 \approx \prod_{t=0}^{d} \int_{X_a} \prod_{i \in I \cap J_t} \psi_{k_i} \,\,d\mu_a.
\end{equation}
If  $I \subseteq J_t$ $(\text{hence} \,\, I \cap J_t \,\, \text{is empty for all indices} \,\, t \,\, \text{except one})$ for some $t=1,\dots,d$, then it is easy to observe that equality holds in \eqref{equ:coupling}. In other cases, we will show that equality in \eqref{equ:coupling} holds with an error term. 

\noindent If $I=I_1 \sqcup \dots \sqcup I_s$, where each $I_t$ is nonempty and $I_t \in \{I \cap J:J \in \mathcal Q\}$ for $t=1,\dots,s.$ Let $g=\hat{f_{\varepsilon}}^{(L)} - \mu_a \left( \hat{f_{\varepsilon}}^{(L)} \right)$, $\mathbf{f}=\hat{f_{\varepsilon}}^{(L)}$ and $k_{I_j}:=\max \{k_i:i \in I_j \}.$ Then

\begin{eqnarray} 
\int_{X_a} \prod_{i \in I} \psi_{k_i} \,\, d \mu_a \nonumber &=& \int_{X_a} \prod_{i \in I} g \circ c^{k_i} \,\, d \mu_a \nonumber \\ &=& \int_{X_a} \prod_{j=1}^{s} \left( \prod_{i \in I_j}g \circ c^{k_i} \right) \,\, d \mu_a \nonumber \\  &=& \int_{X_a} \prod_{j=1}^{s} \left( \prod_{i \in I_j}g \circ c^{k_i-k_{I_j}} \right) \circ c^{k_{I_j}} \,\, d \mu_a \nonumber \\ &=& \int_{X_a} \prod_{j=1}^{s} \left( \sum_{K_j \subseteq I_j } (-\mu_{a}\left( \mathbf{f} \right))^{ |K_j|}   \prod_{i \in I_j \setminus K_j}   \mathbf{f} \circ c^{k_i-k_{I_j}}  \right) \circ c^{k_{I_j}} \,\, d \mu_a \nonumber \\ &=& \int_{X_a}    \sum_{K_j \subseteq I_j, j=1,\dots,s } (-\mu_{a}\left( \mathbf{f} \right))^{\sum_{d=1}^{s} |K_d|} \prod_{j=1}^{s} \left( \prod_{i \in I_j \setminus K_j}   \mathbf{f} \circ c^{k_i-k_{I_j}}  \right) \circ c^{k_{I_j}} \,\, d \mu_a  \nonumber \\ &=&      \sum_{K_j \subseteq I_j,j=1,\dots,s } (-\mu_{a}\left( \mathbf{f} \right))^{\sum_{d=1}^{s} |K_d|} \int_{X_a} \prod_{j=1}^{s} \left( \prod_{i \in I_j \setminus K_j}   \mathbf{f} \circ c^{k_i-k_{I_j}}  \right) \circ c^{k_{I_j}} \,\, d \mu_a  \nonumber 
\end{eqnarray}
Let $\mathbf{f}_{K_j}:=\prod_{i \in I_j \setminus K_j}   \mathbf{f} \circ c^{k_i-k_{I_j}} $ for $j=1,\dots,s.$ Then from above we have
\begin{equation}\label{equ:coupling_redu_i}
\int_{X_a} \prod_{i \in I} \psi_{k_i} \,\, d \mu_a =\sum_{K_j \subseteq I_j,j=1,\dots,s } (-\mu_{a}\left( \mathbf{f} \right))^{\sum_{d=1}^{s} |K_d|} \int_{X_a} \prod_{j=1}^{s} \mathbf{f}_{K_j} \circ c^{k_{I_j}} \,\, d \mu_a.
\end{equation}

We want to use the quantitative estimate for higher order correlations, i.e., Theorem \ref{thm:quan_mixing} to compute the above integral. Before that, let us compute a few other things. Using Lemma \ref{lem:pro_sob_norm}, we get
\begin{eqnarray}
    \prod_{j=1}^{s}S_q(\mathbf{f}_{K_j}) &=& \prod_{j=1}^{s}S_q \left(\prod_{i \in I_j \setminus K_j}   \mathbf{f} \circ c^{k_i-k_{I_j}} \right) \nonumber \\ &\ll& \prod_{j=1}^{s} \left(\prod_{i \in I_j \setminus K_j}   S_q \left(\mathbf{f} \circ c^{k_i-k_{I_j}} \right) \right) \nonumber \\ & \ll & \prod_{j=1}^{s} S_q(\mathbf{f})^{|I_j \setminus K_j|} \prod_{i \in I_j \setminus K_j} \left\|(c^{-1})^{k_{I_j} -k_i} \right\|_{\text{op}}^{q} \nonumber \\&\ll& \prod_{j=1}^{s} S_q(\mathbf{f})^{|I_j \setminus K_j|}   \left\|c^{-1} \right\|_{\text{op}}^{q |I_j \setminus K_j|\alpha_j},\,\, \text{since} \,\, k_{I_j}-k_i \leq \alpha_j \,\, \forall i \in I_j \setminus K_j\nonumber \\ & \ll & S_q(\mathbf{f})^r \|c^{-1}\|^{qr \alpha_j}_{\text{op}}. \nonumber
\end{eqnarray}
Hence \begin{equation}\label{equ:S_q_f_K_j}
    \prod_{j=1}^{s}S_q(\mathbf{f}_{K_j}) \ll S_q(\mathbf{f})^r e^{qr \alpha_j \tau}, \,\, \text{where} \,\, \tau=\log \|c^{-1}\|_{\text{op}}>0.
\end{equation}

 \noindent Using \eqref{equ:coupling_redu_i}, \eqref{equ:S_q_f_K_j} and Theorem \ref{thm:quan_mixing}, we get
\begin{equation}\label{equ:psi_f_K_j}
\int_{X_a} \prod_{i \in I} \psi_{k_i} \,\, d \mu_a =   \sum_{K_j \subseteq I_j,j=1,\dots,s } (-\mu_{a}\left( \mathbf{f} \right))^{\sum_{d=1}^{s} |K_d|} \left(\prod_{j=1}^{s} \int_{X_a} \mathbf{f}_{K_j} \,\, d\mu_a 
     + O_{q,r}(E')\right)
\end{equation}
where

\begin{equation}\label{equ:E_e_S_q}
    E' = e^{-\delta \min_{i\neq j} \rho_{G_a}\left(c^{k_{I_i}},c^{k_{I_j}}\right)} \prod_{j=1}^{s} S_q(\mathbf{f}_{K_j})  \ll  e^{-\delta \min_{i\neq j} \rho_{G_a}\left(1,c^{k_{I_j}-k_{I_i}}\right)}  S_q(\mathbf{f})^r e^{qr \alpha_j \tau}  
\end{equation}
First note that for any $i \neq j,$ $|k_{I_j}-k_{I_i}|>\beta_{j+1}.$ Suppose that $k_{I_j}-k_{I_i} >0.$ Then using Lemma \ref{lem:operator_norm}, we get $0<D_1 \leq 1, D_2>0  $ such that
\begin{equation}\label{equ:rho_G_a}
 \rho_{G_a}\left(1,c^{k_{I_j}-k_{I_i}}\right) \geq D_1 \log \|c^{k_{I_j}-k_{I_i}}\|_{\text{op}} - D_2 \geq D_1 \beta_{j+1} \log \lambda - D_2.
\end{equation}
Let $\delta'=\delta D_1 \log \lambda$. Combining \eqref{equ:E_e_S_q} and \eqref{equ:rho_G_a} we get that
\begin{equation}
    E' \ll e^{-\delta' \beta_{j+1}} S_q(\mathbf{f})^r e^{qr \alpha_j \tau} = e^{-(\delta' \beta_{j+1}-qr \alpha_j \tau)} S_q(\mathbf{f})^r
\end{equation}
Now if we let $E=e^{-(\delta' \beta_{j+1}-qr \alpha_j \tau)} S_q(\mathbf{f})^r ,$ then from \eqref{equ:psi_f_K_j} we get 
 \begin{eqnarray}
\int_{X_a} \prod_{i \in I} \psi_{k_i} \,\, d \mu_a &=&   \sum_{K_j \subseteq I_j,j=1,\dots,s } (-\mu_{a}\left( \mathbf{f} \right))^{\sum_{d=1}^{s} |K_d|} \prod_{j=1}^{s} \int_{X_a} \mathbf{f}_{K_j} \,\, d\mu_a 
     + O_{q,r}(E) \nonumber \\ &=& \prod_{j=1}^{s} \sum_{K_j \subseteq I_j } (-\mu_{a}\left( \mathbf{f} \right))^{ |K_j|} \int_{X_a} \mathbf{f}_{K_j} \,\, d\mu_a 
     + O_{q,r}(E) \nonumber\\ &=& \prod_{j=1}^{s} \int_{X_a} \prod_{i \in I_j} \left(g \circ c^{k_i} \right) \,\, d\mu_{a} + O_{q,r}(E) \nonumber \\ &=& \prod_{j=1}^{s} \int_{X_a} \prod_{i \in I_j} \psi_{k_i}\,\, d\mu_{a} + O_{q,r}(E) \nonumber
\end{eqnarray}
Hence finally for any $\mathbf{k}=(k_1,\dots,k_r) \in \Omega_{\mathcal Q}(\alpha_j,\beta_{j+1},M)$ with $ |\mathcal Q| \geq 2,$ we have

\begin{equation}\label{equ:coupling_final}
    \int_{X_a} \prod_{i \in I} \psi_{k_i} \,\, d\mu_a 
 = \prod_{t=0}^{d} \int_{X_a} \prod_{i \in I \cap J_t} \psi_{k_i} \,\,d\mu_a + O_{q,r}(E),
\end{equation}
where $E=e^{-(\delta' \beta_{j+1}-qr \alpha_j \tau)} S_q(\mathbf{f})^r .$ Now summing the above estimate over all partitions $\mathcal P$ of $\{1,\dots,r\}$ and denoting an element of $\mathcal P$ by $I,$ we get    
\begin{eqnarray}
\text{cum}_{[r]}(\psi_{k_1},\dots,\psi_{k_r})= \text{cum}_{[r]}(\psi_{k_1},\dots,\psi_{k_r} | \mathcal Q) + O_{q,r}(E)
\end{eqnarray}
Now by using Proposition \ref{prop:condi_cumu}, we get 
\begin{equation}
|\text{cum}_{[r]}(\psi_{k_1},\dots,\psi_{k_r})| \ll_{q,r} E
\end{equation}

\noindent \textbf{Final estimate on the cumulants:}

Combining all the estimates, we get 
 
 \begin{equation}\label{equ:total_cummu}
     \text{cum}_{[r]} \left(H_M^{(\varepsilon,L)} \right) \ll M^{1-r/2} \beta_r^{r-1} L^{(r-l)^+} + \varepsilon^{-qr}L^r M^{r/2}  \max_j \left\{ e^{-(\delta' \beta_{j+1}-qr \alpha_j \tau)} \right\}
 \end{equation}
Now we choose the parameters $\alpha_j$ and $\beta_j$ such that the right hand side of \eqref{equ:total_cummu} goes to zero as $M \rightarrow \infty.$ We do this by choosing a single parameter $\eta>0.$ We define the parameters inductively by
\begin{equation}\label{equ:beta_j}
    \beta_1=\eta \quad \text{and} \quad \beta_{j+1}=\max \{\eta+(3+r)\beta_j,\eta+(\delta')^{-1} r(3+r)q\tau\}
\end{equation}
for $j=1,\dots,r-1.$

\noindent The above choice of $\beta_{j+1}$ fulfills the requirement, i.e., $\alpha_j=(3+r)\beta_j < \beta_{j+1}$ of Proposition  \ref{prop:decom_R}. Also we have
\begin{equation}
\delta' \beta_{j+1}-qr\tau \alpha_j \geq \delta' \eta >0.
\end{equation}
By induction, it easily follows that $\beta_r \ll_r \eta.$ Hence from \eqref{equ:total_cummu} we get
\begin{equation}\label{equ:total_cummu_1}
\text{cum}_{[r]} \left(H_M^{(\varepsilon,L)} \right) \ll M^{1-r/2} {\eta}^{r-1} L^{(r-l)^+} + \varepsilon^{-qr}L^r M^{r/2}   e^{-\delta' \eta}.
\end{equation}
Further we want to choose parameters $\varepsilon$ and $L$ such that 
\begin{equation}\label{equ:cummu_part_1}
M^{1-r/2} {\eta}^{r-1} L^{(r-l)^+} \rightarrow 0  
\end{equation}
and
\begin{equation}\label{equ:cummu_part_2}
    \varepsilon^{-qr}L^r M^{r/2}   e^{-\delta' \eta} \rightarrow 0
\end{equation}
   as $M \rightarrow \infty.$ 
   
   \noindent We take the parameter $ \eta=C_r \log M$ for some constant $C_r>0.$
   
   \noindent If $r-l<0,$ then $L^{(r-l)^+}=1.$ Hence in this case \eqref{equ:cummu_part_1}
follows by our choice of the parameter $\eta $ as above, since $r \geq 3$, $1-r/2 \leq -1/2$ and $\eta^{r-1}=o(M^{1/2}).$

\noindent Now assume that $r-l>0.$ Then if we choose the parameter $L=M^d$ for some real number $d$ (we will make the choice of $d$ later), \eqref{equ:cummu_part_1} holds provided $1-r/2 +d(r-l)<0.$ Hence $d $ must satisfy
\begin{equation}\label{equ:upper_bound_d}
d<\frac{r-2}{2(r-l)}.
\end{equation}
We also want \eqref{equ:epsilon_L_i} to hold, and this forces $d(l-3)>1. $ Therefore we can find the suitable choice of $d$ if and only if $\frac{1}{l-3} < \frac{r-2}{2(r-l)},$ i.e., $r(l-5)+6>0.$ This holds since by assumption $l \geq 5.$ 

Finally, the only things remaining to choose are $C_r$ and $\varepsilon.$ Choose $\varepsilon =M^{-2}.$ Then \eqref{equ:cummu_part_2} holds if $r(2q+d+1/2)-\delta' C_r <0$ and this holds if $C_r>\frac{r}{\delta'}(2q+d+1/2).$ Hence for such a choice of $C_r$, $\varepsilon$ and $L$, \eqref{equ:cummu_part_1} and \eqref{equ:cummu_part_2} holds. This completes the proof of \eqref{equ:affine_CLT_condition_cum} for $r\geq 3.$ Now as an immediate application of Theorem \ref{thm:CLT_crit}, Theorem \ref{thm:H_epsilon_L_CLT} follows.

\section{Proof of the CLT for congruence unimodular lattices}
Throughout this section, we will denote the set of all congruence unimodular lattices on $\mathbb R^l$ by $X_c,$ where $l=m+n.$ Let $g : \mathbb R^l \rightarrow \mathbb R$ be a Borel measurable function with compact support. The Siegel transform of $g$ is $\hat{g}:X_{c} \rightarrow \mathbb R$ defined by
\begin{equation}\label{equ:31}
\hat{g}(\Lambda) =\sum_{\lambda \in \Lambda \setminus \{0\}} g(\lambda) \quad \text{for} \,\, \Lambda \in X_c.
\end{equation}
Now let us record the analogs of Siegel's transform and Rogers's second moment formula for congruence lattice from \cite{GKY}. The proof of the same can also be found in \cite{AGH}.
\begin{proposition}\cite[Theorem 3.2]{GKY}.\label{prop:Siegel_cong}
    Let $g : \mathbb R^l \rightarrow \mathbb R$ be a bounded Borel measurable function with compact support. Then
    \begin{equation}\label{equ:Siegel_cong}
 \int_{X_c} \hat{g}(\Lambda) \,\, d\mu_c(\Lambda)=\int_{\mathbb R^l} g(\mathbf x) \,\, d\mathbf x
    \end{equation}
    \begin{equation}\label{equ:Roger_cong}
    \int_{X_c} {\hat{g}(\Lambda)}^2 \,\, d\mu_c(\Lambda)= \left(\int_{\mathbb R^l} g(\mathbf x) \,\, d\mathbf x \right)^2 +\frac{1}{\zeta_N(m+n)} \sum_{\tiny \begin{array}{c} s_1 \geq 1\\ \gcd(s_1,N)=1 \end{array} } \sum_{ \tiny \begin{array}{c} s_2 \in \mathbb Z \setminus \{0\} \\ s_2 \equiv s_1  (\mod  N) \end{array}} \int_{\mathbb R^l} {g(s_1\mathbf x)} g(s_2 \mathbf x) \,\, d\mathbf x,
    \end{equation}
    where $\zeta_N$ is defined in Theorem \ref{thm:main_congruence}.
\end{proposition}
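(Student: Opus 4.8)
The plan is to prove both identities by the classical unfolding method on the homogeneous space $X_c = \SL_l(\mathbb{R})/\Gamma_{\mathbf{v},N}$: I would decompose the set of nonzero lattice vectors (and, for the second formula, the set of pairs of such vectors) into $\Gamma_{\mathbf{v},N}$-orbits, and convert the integral of each orbit-sum over $X_c$ into an integral over $\mathbb{R}^l$ (resp.\ $\mathbb{R}^l \times \mathbb{R}^l$). The combinatorial heart is the $\Gamma_{\mathbf{v},N}$-action on $V := \{\mathbf{u} \in \mathbb{Z}^l : \mathbf{u} \equiv \mathbf{v} \pmod N\}$, which (after clearing the denominator $N$) parametrizes the points $g(\mathbf{w} + \mathbf{v}/N)$ of a congruence lattice. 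Each $\mathbf{u} \in V$ factors as $\mathbf{u} = s\,\mathbf{p}$ with $\mathbf{p}$ primitive in $\mathbb{Z}^l$ and $s \geq 1$ its content; the condition $\mathbf{u} \equiv \mathbf{v}\pmod N$ together with $\gcd(\mathbf{v},N)=1$ forces $\gcd(s,N)=1$ and fixes the residue $\mathbf{p} \equiv s^{-1}\mathbf{v} \pmod N$. The structural input I would rely on is that reduction $\SL_l(\mathbb{Z}) \to \SL_l(\mathbb{Z}/N)$ is surjective, so that the $\Gamma_{\mathbf{v},N}$-orbits of primitive vectors are indexed by their residue classes modulo $N$ (a consequence of strong approximation for $\SL_l$, valid for $l \geq 3$).

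For the mean-value formula \eqref{equ:Siegel_cong} I would fix the content $s$, unfold the sum over the single $\Gamma_{\mathbf{v},N}$-orbit of primitive vectors in the class $s^{-1}\mathbf{v}$, and obtain a constant (determined by the probability normalization of $\mu_c$) times $\int_{\mathbb{R}^l} g\big((s/N)\mathbf{x}\big)\,d\mathbf{x}$. Summing over $s \geq 1$ coprime to $N$ and applying the scaling substitution $\mathbf{y} = (s/N)\mathbf{x}$, the resulting series $\sum_{\gcd(s,N)=1} s^{-l} = \zeta_N(l)$ combines with the normalization constant so that everything collapses to $\int_{\mathbb{R}^l} g$, exactly as in Siegel's theorem.

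For the second moment \eqref{equ:Roger_cong} I would expand $\hat{g}(\Lambda)^2 = \sum_{\xi_1,\xi_2} g(g\xi_1)g(g\xi_2)$ and split the pairs according to whether $\xi_1,\xi_2$ are linearly independent or linearly dependent. For independent pairs, the orbit analysis (again via surjectivity onto $\SL_l(\mathbb{Z}/N)$) lets one unfold to $\int_{\mathbb{R}^l}\int_{\mathbb{R}^l} g(\mathbf{x})g(\mathbf{y})\,d\mathbf{x}\,d\mathbf{y} = \big(\int_{\mathbb{R}^l} g\big)^2$, the content/zeta factors from the two free directions cancelling as in the first moment. For dependent pairs, both vectors lie on a common line $\mathbb{R}\mathbf{p}$ with $\mathbf{p}$ primitive; writing $\xi_i = (s_i/N)\mathbf{p}$ (fixing the $\pm$ ambiguity of $\mathbf{p}$ by taking $s_1 > 0$), the two membership conditions $s_i\mathbf{p} \equiv \mathbf{v}\pmod N$ force $\gcd(s_1,N)=1$ and $s_2 \equiv s_1 \pmod N$, which is precisely the summation range in the statement. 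Here there is a single free primitive direction but two independent contents, so unfolding over the $\Gamma_{\mathbf{v},N}$-orbit of lines produces the factor $\frac{1}{\zeta_N(l)}$ and leaves the explicit double sum $\sum_{s_1,s_2}\int_{\mathbb{R}^l} g(s_1\mathbf{x})g(s_2\mathbf{x})\,d\mathbf{x}$.

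The main obstacle will be the bookkeeping of the unfolding constants: correctly identifying the stabilizers of primitive vectors inside $\Gamma_{\mathbf{v},N}$, pinning down the measure normalizations so that the independent-pair term yields exactly $\big(\int g\big)^2$ with no spurious factor while the dependent-pair term carries precisely $\zeta_N(l)^{-1}$, and justifying the interchange of summation and integration (which is where absolute convergence, and hence a lower bound on the dimension, enters). The transitivity/orbit-counting statements rest on strong approximation for $\SL_l$, and — as noted above — the crucial congruence constraint $s_2 \equiv s_1 \pmod N$ must be \emph{derived} from $\gcd(\mathbf{v},N)=1$ rather than imposed, since it is exactly this that distinguishes the congruence formula from the ordinary Rogers formula.
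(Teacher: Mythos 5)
The paper does not prove this proposition at all: it is recorded as a quotation of \cite[Theorem 3.2]{GKY} (with a proof also available in \cite{AGH}), so there is no internal argument to compare yours against. That said, your outline is, in substance, the proof given in those references: unfold over $\Gamma_{\mathbf v,N}$-orbits, use surjectivity of reduction modulo $N$ (strong approximation for $\SL_l$) to identify orbits of primitive vectors with residue classes, and derive from $\gcd(\mathbf v,N)=1$ that the content $s$ is coprime to $N$ and that, in the collinear term, $s_2\equiv s_1\ (\mathrm{mod}\ N)$. The first-moment computation and the collinear (diagonal) part of the second moment are exactly as routine as you describe.

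One step of your sketch is thinner than it reads. For the off-diagonal term you say the orbit analysis ``lets one unfold to $\big(\int g\big)^2$, the content/zeta factors from the two free directions cancelling as in the first moment,'' but neither $\SL_l(\mathbb Z)$ nor $\Gamma_{\mathbf v,N}$ acts transitively on pairs of linearly independent primitive vectors with prescribed residues: the index of $\mathbb Z\mathbf p_1+\mathbb Z\mathbf p_2$ inside its saturation $(\mathbb R\mathbf p_1+\mathbb R\mathbf p_2)\cap\mathbb Z^l$ is an orbit invariant, so there are infinitely many orbits of independent pairs. Obtaining exactly $\big(\int g\big)^2$ with no extra factor therefore requires summing the unfolded contributions over all of these orbits and showing the resulting Dirichlet series collapses --- this is the actual content of Rogers' second-moment computation, not a single transitive unfolding. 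You correctly flagged the constant bookkeeping as the main obstacle; this is precisely where it bites, and it is the one place where your plan needs a genuine additional argument rather than just care.
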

By arguments similar to the previous section, it is easy to see that Theorem \ref{thm:main_congruence} is equivalent to the following theorem.
\begin{theorem}\label{thm:cong_reduc}
Under the hypothesis of Theorem \ref{thm:main_congruence},
       
    \begin{equation}\label{equ:R_M}
        R_M:=\frac{1}{\sqrt{M}} \sum_{k=0}^{M-1} \left(\hat{\chi} \circ c_0^k - M \cdot \vol(\Omega_2)\right) \rightarrow N(0,\eta^2\sigma_c^2) \,\, \text{in distribution as} \,\, M \rightarrow \infty,
    \end{equation}
    where $\sigma_c^2$ is given by Theorem \ref{thm:main_congruence}, $\eta=\vol(\Omega_2)^{1/2}$ and $\hat{\chi}$ is the Siegel transform of $\chi_{\Omega_2}$ on $X_c.$
\end{theorem}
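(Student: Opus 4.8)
The plan is to follow verbatim the architecture already used for Theorem \ref{thm:affine_reduc}, replacing the affine space $X_a,\mu_a$ by the congruence space $X_c,\mu_c$ and the affine Roger formula \eqref{equ:23} by its congruence analog \eqref{equ:Roger_cong}. By the argument of Proposition \ref{prop:reduction_2M} one first reduces to $T=2^M$, and the tessellation \eqref{equ:26}--\eqref{equ:28} (which only uses $c_0\in\SL_l(\mathbb R)$ and therefore makes equal sense on $X_c$) rewrites $|\Lambda\cap\Omega_{2^M}|$ as $\sum_{k=0}^{M-1}\hat\chi(c_0^k\Lambda)$, giving the reformulation \eqref{equ:R_M}. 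It then suffices to verify the Fr\'echet--Shohat criteria of Theorem \ref{thm:CLT_crit} for a smooth, truncated approximation $R_M^{(\varepsilon,L)}$ of $R_M$.

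The auxiliary toolkit transfers to $X_c$ with no essential change. Since $\Gamma_{\mathbf v,N}$ contains the principal congruence subgroup $\Gamma(N)$ it is of finite index in $\SL_l(\mathbb Z)$, so the function $\alpha_l$ of Definition \ref{defn:alpha} restricts to a proper function on $X_c$ obeying the tail bound $\mu_c(\{\alpha_l\ge L\})\ll_p L^{-p}$ for $p<l$ and the pointwise Siegel bound $|\hat g(\Lambda)|\ll\|g\|_\infty\,\alpha_l(\Lambda)$, exactly as in Propositions \ref{prop:alpha_Lp_affine} and \ref{prop:Siegel_alpha_bound_affine}. Consequently the approximants $f_\varepsilon$ of \eqref{equ:chi_f_epsilon}, the truncated transforms $\hat{f_\varepsilon}^{(L)}$ of Lemma \ref{lem:tranc_Siegel_property}, and the $L^1$/$L^2$ estimates of Proposition \ref{prop:H_epsilon_L} and Lemma \ref{lem:H_epsilon_L-H_2} all carry over, so that $\|R_M-R_M^{(\varepsilon,L)}\|_2\to0$ under the same choice \eqref{equ:epsilon_L_i} of $\varepsilon,L$. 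The operator-norm and Sobolev machinery of Definitions \ref{defn:operator_norm}, \ref{defn:sobolev_norm} and Lemmas \ref{lem:operator_norm}, \ref{lem:pro_sob_norm} has a direct analog on $\SL_l(\mathbb R)$ and $X_c$, and the key dynamical input, quantitative multiple mixing, is the $\SL_l(\mathbb R)$-instance of \cite[Theorem 1.1]{BEG} applied to the finite-covolume lattice $\Gamma_{\mathbf v,N}$ (here $l\ge5$, so $\SL_l(\mathbb R)$ has property (T)); this is the exact replacement for Theorem \ref{thm:quan_mixing}.

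The genuinely new work, and the main obstacle, is the variance. As in \eqref{equ:series_affine} one reduces $\lim_M\|R_M\|_2^2$ to $\sum_{k\in\mathbb Z}\int_{X_c}\phi_k\phi_0\,d\mu_c$ with $\phi_k=\hat\chi\circ c_0^k-\vol(\Omega_2)$, and the mean value part of Proposition \ref{prop:Siegel_cong} cancels the diagonal. Unlike the affine case, where polarizing \eqref{equ:23} left only the overlap $\vol(\Omega_2\cap c_0^{-k}\Omega_2)$ (null for $k\ne0$), polarizing the congruence formula \eqref{equ:Roger_cong} (with $f=\chi\circ c_0^k$, $g=\chi$, using $\hat\chi\circ c_0^k=\widehat{\chi\circ c_0^k}$) leaves a nonvanishing arithmetic correction
\begin{equation*}
\int_{X_c}\phi_k\phi_0\,d\mu_c=\frac{1}{2\zeta_N(l)}\sum_{\substack{s_1\ge1\\ \gcd(s_1,N)=1}}\ \sum_{\substack{s_2\in\mathbb Z\setminus\{0\}\\ s_2\equiv s_1\,(N)}}\int_{\mathbb R^l}\big[\chi(s_1 c_0^k\mathbf x)\chi(s_2\mathbf x)+\chi(s_1\mathbf x)\chi(s_2 c_0^k\mathbf x)\big]\,d\mathbf x.
\end{equation*}
The hard part will be evaluating the resulting triple sum over $k,s_1,s_2$. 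I would write $\mathbf x=(\mathbf a,\mathbf b)\in\mathbb R^m\times\mathbb R^n$ and use that $c_0^k$ dilates $\|\mathbf b\|$ by $2^{-k}$: the two defining annuli $\{1\le\|s_2\mathbf b\|\le2\}$ and $\{1\le s_1 2^{-k}\|\mathbf b\|\le2\}$ overlap precisely when $2^{k-1}|s_2|\le s_1\le2^{k+1}|s_2|$, so for each pair $(s_1,s_2)$ only $O(1)$ values of $k$ contribute and the overlap volume is an explicit elementary integral in $\|\mathbf b\|$ and the $x_i$-constraints. Reindexing via $s_1=Ns_2+s$ with $s$ ranging over the residues $C_N$ and collecting terms should reproduce exactly $\vol(\Omega_2)\,\sigma_c^2$ with $\sigma_c^2$ as in Theorem \ref{thm:main_congruence}, the weight $\tfrac{s_2-1}{(Ns_2+s)^l}$ emerging from counting admissible $k$ together with the Jacobian of the dilations. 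Absolute convergence (hence finiteness of the variance) holds because each summand is $\le\max(s_1,|s_2|)^{-l}\vol(\Omega_2)$, which is summable for $l\ge3$.

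The higher cumulants require no new ideas beyond the affine argument. Writing $R_M^{(\varepsilon,L)}=M^{-1/2}\sum_k\psi_k$ with $\psi_k=\hat{f_\varepsilon}^{(L)}\circ c_0^k-\mu_c(\hat{f_\varepsilon}^{(L)})$, multilinearity gives the analog of \eqref{equ:cum_H_M_epsilon_L}, and I would split the index set by Proposition \ref{prop:decom_R}. On the diagonal region $\Omega(\beta_r,M)$ the generalized H\"older inequality with the $L^p$-bounds on $\alpha_l$ yields the analog of \eqref{equ:cumm_case_i}; on each $\Omega_{\mathcal Q}(\alpha_j,\beta_{j+1},M)$ with $|\mathcal Q|\ge2$ the factorization \eqref{equ:coupling_final} follows from Lemmas \ref{lem:pro_sob_norm} and \ref{lem:tranc_Siegel_property} together with the $\SL_l(\mathbb R)$-mixing estimate, after which Proposition \ref{prop:condi_cumu} kills the conditional cumulant. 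The final bound \eqref{equ:total_cummu} and the identical parameter choices $\eta=C_r\log M$, $L=M^d$, $\varepsilon=M^{-2}$ (admissible since $l\ge5$ forces $r(l-5)+6>0$) then force $\text{Cum}_{[r]}(R_M^{(\varepsilon,L)})\to0$ for all $r\ge3$, and Theorem \ref{thm:CLT_crit} delivers Theorem \ref{thm:cong_reduc}.
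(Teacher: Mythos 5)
Your proposal follows the paper's own route essentially verbatim: reduce to $T=2^M$ and the tessellated reformulation, transfer the truncation/Sobolev/mixing toolkit to $X_c$, dispose of the higher cumulants exactly as in the affine case, and compute the variance by polarizing the congruence Rogers formula \eqref{equ:Roger_cong} and evaluating the resulting sum over $k,s_1,s_2$ (the paper sums over $k$ first to replace $\sum_k\chi\circ c_0^k$ by the indicator of the full cone $\Omega$, which is the same bookkeeping as your per-pair count of admissible $k$). The proposal is correct and matches the paper's argument.
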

Again, as we have seen in the case of affine unimodular lattices, here also Theorem \ref{thm:cong_reduc} is equivalent to proving the following theorem.
\begin{theorem}\label{thm:R_epsilon_L}
Under the hypothesis of Theorem \ref{thm:main_congruence},
    \begin{equation}\label{equ:R_epsilon_L}
       R_M^{(\varepsilon,L)}:=\frac{1}{\sqrt{M}} \left( \sum_{k=0}^{M-1}   \hat{g_{\varepsilon}}^{(L)} \circ c_0^k - M \int_{X_c} \hat{g_{\varepsilon}}^{(L)} \,\,  d\mu_c \right) \rightarrow N(0,\eta^2\sigma_c^2) \,\, \text{in distribution as} \,\, M \rightarrow \infty ,
   \end{equation}
   where $\hat{g_{\varepsilon}}^{(L)}=\hat{g_{\varepsilon}} \eta_L \in C_{c}^{\infty}(X_c)$ are smooth approximations to $\hat{\chi}$, $\sigma_c^2$ is given by Theorem \ref{thm:main_congruence} and $\eta=\vol(\Omega_2)^{1/2}$.

\end{theorem}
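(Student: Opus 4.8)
The plan is to verify the two hypotheses of the Fr\'echet--Shohat criterion (Theorem~\ref{thm:CLT_crit}) for the bounded functions $R_M^{(\varepsilon,L)}$ on the probability space $(X_c,\mu_c)$: that the variances converge to a finite limit and that every cumulant of order $r\geq 3$ tends to $0$. Since $R_M^{(\varepsilon,L)}$ is centered by construction and $\hat{g_{\varepsilon}}^{(L)}\in C_c^{\infty}(X_c)$ is bounded, both quantities are well defined, and the entire architecture of Section~\ref{sec:affine} transfers. Accordingly I would first record the congruence analogs of the auxiliary estimates --- the Siegel sup-bound, the $\alpha$-function $L^p$-bounds, the truncation estimates of Lemma~\ref{lem:tranc_Siegel_property}, and the quantitative multiple mixing of Theorem~\ref{thm:quan_mixing} --- on $X_c=\SL_l(\mathbb{R})/\Gamma_{\mathbf v,N}$, and then carry out the variance and cumulant computations in turn. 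The only genuinely new input is the evaluation of the limiting variance, which is where the constant $\sigma_c^2$ enters.

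\textbf{Variance.} Exactly as in Lemma~\ref{lem:H_epsilon_L-H_2}, the congruence truncation estimates give $\|R_M-R_M^{(\varepsilon,L)}\|_2\to 0$ for a suitable choice of $\varepsilon(M),L(M)$, so it suffices to compute $\lim_M\|R_M\|_2^2$ with $R_M=\tfrac{1}{\sqrt M}\sum_{k=0}^{M-1}\phi_k$, where $\phi_k:=\hat{\chi}\circ c_0^k-\vol(\Omega_2)$. By stationarity of $\{\phi_k\}$ under $c_0$ and dominated convergence, $\lim_M\|R_M\|_2^2=\sum_{k\in\mathbb{Z}}\int_{X_c}\phi_k\phi_0\,d\mu_c$. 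Applying the polarized form of the congruence second-moment formula \eqref{equ:Roger_cong} to $\chi$ and $\chi\circ c_0^k$, the leading squares cancel and one is left with the dependent-vector terms $\tfrac{1}{\zeta_N(l)}\sum_{s_1,s_2}\int_{\mathbb{R}^l}\chi(s_1\mathbf x)\,\chi(s_2 c_0^k\mathbf x)\,d\mathbf x$, the sum ranging over $s_1\geq 1$ with $\gcd(s_1,N)=1$ and $s_2\equiv s_1\ (\mathrm{mod}\ N)$, $s_2\neq 0$. \emph{Here lies the key difference from the affine case}: these correlations do not vanish for $k\neq 0$. However, $\{c_0^{-k}\Omega_2\}_{k\in\mathbb{Z}}$ tessellates the full cone $\{|x_i|\|\mathbf y\|^{u_i}<c_i\}$ (the two-sided extension of \eqref{equ:26}), and a direct scaling computation shows the $\mathbf x$-constraint binds at the larger of $s_1,|s_2|$, so that $\sum_{k\in\mathbb{Z}}\int\chi(s_1\mathbf x)\chi(s_2 c_0^k\mathbf x)\,d\mathbf x=\vol(\Omega_2)/\max(s_1,|s_2|)^l$. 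Summing the resulting double series (convergent since $l\geq 3$) and regrouping over the residue classes $s\in C_N$ yields $\lim_M\|R_M\|_2^2=\vol(\Omega_2)\,\sigma_c^2=\eta^2\sigma_c^2$; as a consistency check, at $N=1$ this collapses to $2\bigl(\tfrac{2\zeta(l-1)}{\zeta(l)}-1\bigr)\vol(\Omega_2)$, matching the ordinary-lattice constant of Theorem~\ref{thm:main_lattice}.

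\textbf{Cumulants.} For $r\geq 3$ the computation is verbatim that of Section~\ref{sec:affine} with $\mu_a,X_a,G_a$ replaced by $\mu_c,X_c,\SL_l(\mathbb{R})$. Writing $R_M^{(\varepsilon,L)}=\tfrac{1}{\sqrt M}\sum_k\psi_k$ with $\psi_k=\hat{g_{\varepsilon}}^{(L)}\circ c_0^k-\mu_c(\hat{g_{\varepsilon}}^{(L)})$, multilinearity reduces $\text{Cum}_{[r]}(R_M^{(\varepsilon,L)})$ to a sum of $\text{Cum}_{[r]}(\psi_{k_1},\dots,\psi_{k_r})$ over $\{0,\dots,M-1\}^r$, which I split via Proposition~\ref{prop:decom_R}. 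On the near-diagonal block $\Omega(\beta_r,M)$ the generalized H\"older inequality together with the $L^p$-bounds for $\alpha$ on $X_c$ gives the same $M^{1-r/2}\beta_r^{r-1}L^{(r-l)^+}$ estimate; on each $\Omega_{\mathcal Q}(\alpha_j,\beta_{j+1},M)$ with $|\mathcal Q|\geq 2$, the congruence version of the quantitative mixing Theorem~\ref{thm:quan_mixing}, the Sobolev estimates, and the operator-norm lower bound for $c_0$ (the linear analog of Lemma~\ref{lem:operator_norm}(ii)) approximate the correlation by its $\mathcal Q$-factorized form with exponentially small error, after which Proposition~\ref{prop:condi_cumu} annihilates the conditional cumulant. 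These combine into the same bound \eqref{equ:total_cummu}; taking $\eta=C_r\log M$, $L=M^d$ and $\varepsilon=M^{-2}$ with $d$ in the admissible window forces both terms to $0$, and the window is nonempty precisely because $l\geq 5$.

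\textbf{Main obstacle.} The routine transfer of the cumulant machinery hinges on having the congruence analogs of the mixing theorem and of the Siegel and $\alpha$-function estimates with the correct Sobolev norms; establishing these (or quoting them from \cite{BEG,GKY,AGH}) is the technical overhead. The genuinely delicate and novel step is the variance: one must justify interchanging the $k$-summation with the series over $(s_1,s_2)$ in the presence of the Ces\`aro weights $(1-|k|/M)$ (the contributing $k\approx\log_2(s_2/s_1)$ is unbounded), identify the $\max(s_1,|s_2|)^{-l}$ decay produced by the cone tessellation, and carry out the arithmetic regrouping over $C_N$ that yields the closed form $\sigma_c^2$ of Theorem~\ref{thm:main_congruence}. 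Everything else follows the affine template of Theorem~\ref{thm:H_epsilon_L_CLT} and the reduction Theorem~\ref{thm:cong_reduc}.
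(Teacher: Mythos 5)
Your proposal follows essentially the same route as the paper: the cumulants of order $r\geq 3$ are handled by transferring the affine-case machinery verbatim (which is exactly what the paper does, omitting the details), and the variance is computed via the polarized congruence second-moment formula \eqref{equ:Roger_cong}, the two-sided tessellation of the cone producing the $\max\{s_1,|s_2|\}^{-(m+n)}$ correlation, and the regrouping over residue classes in $C_N$. Your $N=1$ consistency check is a worthwhile addition: it actually flags that the closed form for $\sigma_c^2$ displayed in Theorem \ref{thm:main_congruence} carries a spurious factor of $\zeta_N(m+n)^{-1}$ relative to the penultimate line of the paper's own variance computation, so the value you report (which does reduce to the unimodular constant $\sigma_u^2$ when $N=1$) is the internally consistent one.
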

Again, we use the CLT criteria of Fr\'{e}chet and Shohat to prove Theorem \ref{thm:R_epsilon_L}. Proving Theorem \ref{thm:R_epsilon_L} amounts to show that for some choice of parameters $\varepsilon$ and $L$ 

\begin{equation}\label{equ:cong_CLT_condition_var}
\sigma^2 :=\lim_{M \rightarrow \infty} \|R_M^{(\varepsilon,L)}\|_2^2 < \infty
\end{equation}

\begin{equation}\label{equ:cong_CLT_condition_cum}
\lim_{M \rightarrow \infty} \text{Cum}_{[r]} \left(R_M^{(\varepsilon,L)} \right)=0 \quad \text{for all} \,\, r \geq 3.
\end{equation}

The computation of cumulant (i.e., Equation \eqref{equ:cong_CLT_condition_cum}) is analogous to the affine case (with appropriate modifications) discussed in detail in the previous section. Hence, we omit the details here. Rather, we do the computation of variance for the congruence case.

\noindent \textbf{Computation of variance:}

\noindent First note that given the triangle inequality, we have
\begin{equation}\label{equ:R_M_R_M_epsilon_L}
    \|R_M\|_2 -\|R_M - R_M^{(\varepsilon,L)}\|_2 \leq \|R_M^{(\varepsilon,L)}\|_2 \leq \|R_M\|_2 + \|R_M - R_M^{(\varepsilon,L)}\|_2 ,
\end{equation}
where $R_M$ is given by \eqref{equ:R_M}. Hence to show \eqref{equ:cong_CLT_condition_var}, it is enough to show that  

\begin{equation}\label{equ:lim_R_M}
    \lim_{M \rightarrow \infty} \|R_M\|_2^2 < \infty
\end{equation}
and 
\begin{equation}\label{equ:R_M-R_M_epsilon_L}
    \|R_M - R_M^{(\varepsilon,L)}\|_2 \rightarrow 0.
\end{equation}
Also if we can show \eqref{equ:lim_R_M} and \eqref{equ:R_M-R_M_epsilon_L}, we get the expression of variance as 
\begin{equation}
    \sigma^2 = \lim_{M \rightarrow \infty} \|R_M^{(\varepsilon,L)}\|_2^2 =  \lim_{M \rightarrow \infty} \|R_M\|_2^2.
\end{equation}
Recall that \begin{equation}
R_M=\frac{1}{\sqrt{M}}   \sum_{k=0}^{M-1} (\hat{\chi} \circ c_0^k- \vol(\Omega_2))= \frac{1}{\sqrt{M}}   \sum_{k=0}^{M-1} \phi_k,
\end{equation}
where $\phi_k=\hat{\chi} \circ c_0^k -   \vol(\Omega_2).$

\begin{eqnarray}\label{equ:series_cong}
    \|R_M\|_2^2 =\frac{1}{M} \sum_{k_1,k_2=0}^{M-1} \int_{X_c} \phi_{k_1} \phi_{k_2} \,\, d\mu_c &=& \frac{1}{M} \sum_{k_1,k_2=0}^{M-1} \int_{X_c} \phi_{k_1-k_2} \phi_{0} \,\, d\mu_c \nonumber \\ & = &  \frac{1}{M} \sum_{\pm k=0}^{M-1} (M -|k|) \int_{X_c} \phi_{k} \phi_{0} \,\, d\mu_c \nonumber \\ &=&  \sum_{k = -\infty}^{\infty} \chi_{B_M} \left(1 -\frac{|k|}{M} \right) \int_{X_c} \phi_{k} \phi_{0} \,\, d\mu_c,
\end{eqnarray}
where $B_M:=\{k \in \mathbb Z : |k| \leq M-1\}.$ Again, using the dominated convergence theorem, we get
\begin{equation}
\lim_{M \rightarrow \infty} \|R_M\|_2^2 =\sum_{k=-\infty}^{\infty} \int_{X_c} \phi_{k} \phi_{0} \,\, d\mu_c .
\end{equation}
Now Rogers's second moment formula gives
\begin{eqnarray}
 \int_{X_c} \phi_{k} \phi_{0} \,\, d\mu_c &=&  \int_{X_c} \hat{\chi} \cdot (\hat{\chi} \circ c_0^k) \,\, d\mu_c - \vol(\Omega_2)^2 \nonumber \\ \nonumber &=&  \frac{1}{2} \left(  
\int_{X_c}(\hat{\chi}+\hat{\chi} \circ c_0^k)^2 \,\, d\mu_c -2\int_{X_c} \hat{\chi}^2 \,\, d\mu_c - 2 \vol(\Omega_2)^2  \right) \\ \nonumber &=& \frac{1}{2} \left( \left(\int_{\mathbb R^l} (\chi +\chi \circ c_0^k) \,\, d \mathbf x \right)^2 -2\left(\int_{\mathbb R^l} \chi \,\, d \mathbf x \right)^2 -2 \vol(\Omega_2)^2 \right) \\ \nonumber &+& \frac{1}{2 \zeta_N(m+n)}  \sum_{\tiny \begin{array}{c} s_1 \geq 1\\ \gcd(s_1,N )=1 \end{array} } \sum_{ \tiny \begin{array}{c} s_2 \in \mathbb Z \setminus \{0\} \\ s_2 \equiv s_1  (\mod  N) \end{array}}\int_{\mathbb R^l} ( (\chi +\chi \circ c_0^k)(s_1 \mathbf x) (\chi +\chi \circ c_0^k)(s_2 \mathbf x)    \\ \nonumber &-& 2 \chi (s_1 \mathbf x) \chi (s_2 \mathbf x)) \,\, d\mathbf x  \\ \nonumber &=& \frac{1}{ \zeta_N(m+n)}  \left(\sum_{\tiny \begin{array}{c} s_1 \geq 1\\ \gcd(s_1,N)=1 \end{array} } \sum_{ \tiny \begin{array}{c} s_2 \in \mathbb Z \setminus \{0\} \\ s_2 \equiv s_1  (\mod  N) \end{array}}\int_{\mathbb R^l}  (\chi \circ c_0^k)(s_1 \mathbf x) \chi (s_2 \mathbf x)  \,\, d\mathbf x \right).
\end{eqnarray}
Hence 
\begin{eqnarray}
   \lim_{M \rightarrow \infty} \|R_M\|_2^2  &=& \frac{2}{ \zeta_N(m+n)} \sum_{k=-\infty}^{\infty}   \sum_{\tiny \begin{array}{c} s_1 \geq 1\\ \gcd(s_1,N)=1 \end{array} } \sum_{ \tiny \begin{array}{c} s_2 \geq 1 \\ s_2 \equiv s_1  (\mod  N) \end{array}}\int_{\mathbb R^l}  (\chi \circ c_0^k)(s_1 \mathbf x) \chi (s_2 \mathbf x) \,\, d\mathbf x  \\ \nonumber & =& \frac{2}{ \zeta_N(m+n)} \sum_{\tiny \begin{array}{c} s_1 \geq 1\\ \gcd(s_1,N)=1 \end{array} } \sum_{ \tiny \begin{array}{c} s_2 \geq 1 \\ s_2 \equiv s_1  (\mod  N) \end{array}}\int_{\mathbb R^l} \left(  \sum_{k=-\infty}^{\infty}   \chi \circ c_0^k\right)(s_1 \mathbf x) \chi (s_2 \mathbf x)  \,\, d\mathbf x.  \\ \nonumber 
\end{eqnarray}
 Now let 
\begin{equation}
   \Omega=  \left\{(\mathbf x,\mathbf y) \in \mathbb R^{m} \times \mathbb R^n: \,\,  |x_i|\|\mathbf y\|^{u_i} < c_i \,\, \text{for}\,\, i=1,\dots,m \right \}.
 \end{equation}
Then, by changing the coordinates to spherical coordinates, we get
 \begin{eqnarray} 
    \int_{\mathbb R^l} \left(  \sum_{k=-\infty}^{\infty}   \chi \circ c_0^k\right)(s_1 \mathbf x) \chi (s_2 \mathbf x)  \,\, d\mathbf x &=&  \int_{\mathbb R^l} \chi_{\Omega}(s_1 \mathbf x) \chi (s_2 \mathbf x)  \,\, d\mathbf x \\ \nonumber &=& \vol \left(s_1^{-1} \Omega \cap s_2^{-1} \Omega_2 \right) \\ \nonumber &=&  
   \int_{1/s_2 \leq \|\mathbf y\| \leq 2/s_2} \prod_{i=1}^{m}\left(\frac{2 c_i}{\max \{s_1,s_2\}^{1+u_i} \|\mathbf y\|^{u_i}}  \right) d\mathbf y \nonumber \\ &=&  \frac{2^m c_1 \dots c_m \omega_n}{\max \{s_1,s_2\}^{m+n} } \int_{1/s_2}^{2/s_2}  \frac{1}{r^n} r^{n-1}  dr  \nonumber  
 \end{eqnarray}
 Hence
 \begin{equation}\label{equ:xi_Omega_vol}
    \int_{\mathbb R^l} \left(  \sum_{k=-\infty}^{\infty}   \chi \circ c_0^k\right)(s_1 \mathbf x) \chi (s_2 \mathbf x)  \,\, d\mathbf x=2^m (\log 2 ) \max \{s_1,s_2\}^{-m-n}  c_1 \dots c_n \omega_n,
 \end{equation}
where $\omega_n$ is given by \eqref{equ:omega_n_main}.

\noindent Let $C_N=\{s\in \mathbb Z: 0 \leq s < N \,\, \text{and} \,\, \gcd(s,N)=1\}.$ In view of \eqref{equ:xi_Omega_vol}, we finally get that
\begin{eqnarray}
    \sigma^2 &=& \frac{2^{m+1}(\log 2 ) c_1 \dots c_n \omega_n}{ \zeta_N(m+n)} \sum_{\tiny \begin{array}{c} s_1 \geq 1\\ \gcd(s_1,N)=1 \end{array} } \sum_{ \tiny \begin{array}{c} s_2 \geq 1 \\ s_2 \equiv s_1  (\mod  N) \end{array}} \max \{s_1,s_2\}^{-m-n} \\ \nonumber &=& \frac{2^{m+1}(\log 2 ) c_1 \dots c_n \omega_n}{ \zeta_N(m+n)} \sum_{s \in C_N } \sum_{ s_1,s_2 \geq 1} \max \{N s_1 +s,N s_2 +s\}^{-m-n} \\ \nonumber &=& \frac{2^{m+1}(\log 2 ) c_1 \dots c_n \omega_n}{ \zeta_N(m+n)} \sum_{s \in C_N } \left( \sum_{ s_1 \geq 1} (N s_1 +s)^{-m-n} +2 \sum_{1 \leq s_1 < s_2} (Ns_2 +s)^{-m-n} \right) \\ &=& \frac{2^{m+1}(\log 2 ) c_1 \dots c_n \omega_n}{ \zeta_N(m+n)} \left( \zeta_{N}(m +n) + 2 \sum_{s \in C_N }  \sum_{ s_2 \geq 1} \frac{s_2 -1}{(Ns_2 +s)^{m+n}} \right) \\ &=& \frac{2^{m+1}(\log 2 ) c_1 \dots c_n \omega_n}{ \zeta_N(m+n)} \left(  1 + \frac{2}{\zeta_N(m+n)} \sum_{s \in C_N }  \sum_{ s_2 \geq 1} \frac{s_2 -1}{(Ns_2 +s)^{m+n}} \right) 
\end{eqnarray}

\section{Proof of the CLT for unimodular lattices}
As mentioned earlier, we denote the set of all unimodular lattices of $\mathbb R^l$ by $X_l,$ where $l=m+n.$ Let $h : \mathbb R^l \rightarrow \mathbb R$ be a Borel measurable function with compact support. The Siegel transform of $h$ is $\hat{h}:X_{l} \rightarrow \mathbb R$ defined by
\begin{equation}\label{equ:lattice_Siegel}
\hat{h}(\Lambda) =\sum_{\lambda \in \Lambda \setminus \{0\}} h(\lambda) \quad \text{for} \,\, \Lambda \in X_l.
\end{equation}
We recall Siegel's integral formula and Rogers's second moment formula for the space of unimodular lattices from \cite{Si,R1}.
\begin{proposition}\label{prop:Siegel_Roger_lattice}
    Let $h : \mathbb R^l \rightarrow \mathbb R$ be a bounded Riemann integrable function with compact support and $F : \mathbb R^l \times \mathbb R^l \rightarrow \mathbb R$ be a non-negative measurable function. Then
    \begin{equation}\label{equ:Siegel_form_lattice}
 \int_{X_l} \hat{h}(\Lambda) \,\, d\mu_l(\Lambda)=\int_{\mathbb R^l} h(\vec{\mathbf z}) \,\, d\vec{\mathbf z}
    \end{equation}
    and 
    \begin{equation}\label{equ:Roger_lattice}
    \int_{X_l} \sum_{\vec{\mathbf{z_1}},\vec{\mathbf{z_2}} \in P(\mathbb Z^l)} F(g \vec{\mathbf{z_1}},g \vec{\mathbf{z_2}})  \,\, d\mu_l(g\mathbb Z^l)= \zeta(l)^{-2} \int_{\mathbb R^l \times \mathbb R^l} F(\vec{\mathbf{z_1}},\vec{\mathbf{z_2}})d\vec{\mathbf{z_1}} d\vec{\mathbf{z_2}} + \zeta(l)^{-1} \int_{\mathbb R^l} \left(F(\vec{\mathbf{z}},\vec{\mathbf{z}}) +F(\vec{\mathbf{z}},\vec{\mathbf{-z}}) \right) d\vec{\mathbf{z}},
    \end{equation}
    where $\zeta$ denotes the Riemann's $\zeta$-function, $P(\mathbb Z^l)$ denotes the the set of all primitive integral vectors in $\mathbb Z^l$ and $d\vec{\mathbf{z}}$ denotes the Lebesgue measure on $\mathbb R^l$
\end{proposition}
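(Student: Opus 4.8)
The plan is to derive both identities from one soft principle: an $\SL_l(\mathbb R)$-invariant positive linear functional on functions on $\mathbb R^l$ (respectively on $\mathbb R^l \times \mathbb R^l$) is represented by an invariant Radon measure, and such measures are unique up to scale by transitivity of the $\SL_l(\mathbb R)$-action; the real content then lies in pinning down the scaling constants. Both statements are classical, due to Siegel \cite{Si} and Rogers \cite{R1}, so I would only reproduce the mechanism. For the first identity, set $I(h) := \int_{X_l} \hat h \, d\mu_l$. Since $\widehat{h \circ g}(\Lambda) = \hat h(g\Lambda)$ for $g \in \SL_l(\mathbb R)$ and $\mu_l$ is $\SL_l(\mathbb R)$-invariant, $I$ is an $\SL_l(\mathbb R)$-invariant positive functional; finiteness of $I(h)$ for bounded compactly supported $h$ comes from the bound $|\hat h|\ll \alpha_l$ together with $\alpha_l \in L^1$. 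As $\SL_l(\mathbb R)$ acts transitively on $\mathbb R^l \setminus \{0\}$ with unimodular stabilizer $\SL_{l-1}(\mathbb R)\ltimes \mathbb R^{l-1}$, the only invariant Radon measures on $\mathbb R^l$ are multiples of Lebesgue measure, so $I(h) = c \int_{\mathbb R^l} h$. The normalization $c = 1$ is obtained by Siegel's fundamental-domain unfolding, which I would carry out with the standard Iwasawa Jacobian computation.

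The same mechanism, applied to primitive vectors, gives the auxiliary identity
\[
\int_{X_l} \sum_{\vec z \in P(\mathbb Z^l)} f(g \vec z)\, d\mu_l(g\mathbb Z^l) = \frac{1}{\zeta(l)} \int_{\mathbb R^l} f,
\]
since writing each nonzero lattice vector uniquely as $k$ times a primitive one yields $\int_{X_l}\hat f = \sum_{k \geq 1} k^{-l}$ times the primitive moment, i.e.\ $\zeta(l)$ times it. For Rogers' formula I would split the pairs $(\vec z_1, \vec z_2) \in P(\mathbb Z^l)^2$ according to whether $\vec z_2 = \vec z_1$, $\vec z_2 = -\vec z_1$, or $\vec z_1, \vec z_2$ are linearly independent; these exhaust all cases, as two primitive vectors are either equal up to sign or independent. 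The two diagonal cases reduce, via the primitive moment above applied to $f(\vec z) = F(\vec z, \vec z)$ and $f(\vec z) = F(\vec z, -\vec z)$, exactly to the term $\zeta(l)^{-1} \int_{\mathbb R^l} \big(F(\vec z, \vec z) + F(\vec z, -\vec z)\big)\, d\vec z$.

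For the independent pairs, the functional $F \mapsto \int_{X_l} \sum_{\text{indep}} F(g\vec z_1, g\vec z_2)\, d\mu_l$ is again $\SL_l(\mathbb R)$-invariant; since $l = m+n \geq 5$, the group acts transitively on the open full-measure set of ordered linearly independent pairs, with unimodular stabilizer $\SL_{l-2}(\mathbb R)\ltimes \mathbb R^{2(l-2)}$, so this functional equals $c' \int_{\mathbb R^l \times \mathbb R^l} F$ for some constant $c'$. Assembling this with the two diagonal terms produces the stated formula once $c' = \zeta(l)^{-2}$ is established.

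The main obstacle is precisely the determination of $c' = \zeta(l)^{-2}$, which I do not expect to follow from a factorization of the first-moment identity (there are genuine correlations between the two summation variables). Instead I would compute it by unfolding over the space of independent pairs: substituting $\vec w_i = k_i \vec z_i$ with $\vec z_i$ primitive turns the full (non-primitive) independent-pair second moment into $\sum_{k_1,k_2\geq 1}(k_1 k_2)^{-l}\, c' \int\!\!\int F = c'\,\zeta(l)^2 \int\!\!\int F$, and matching this against the full second moment (whose leading constant is $1$, computed by Rogers' direct fundamental-domain unfolding in parallel with Siegel's) forces $c' = \zeta(l)^{-2}$. Thus the two $\zeta(l)^{-1}$ factors in the main term track the density of primitive vectors in each coordinate. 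The delicate points throughout are the verification that the invariant functionals are represented by genuine invariant \emph{Radon} measures and the careful bookkeeping of $\zeta$-factors separating primitive from full lattice sums; for non-negative measurable $F$ the identity is understood in $[0,\infty]$ via Tonelli, so no extra integrability hypothesis is needed for the second formula.
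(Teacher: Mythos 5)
The paper does not prove this proposition at all: it is explicitly recalled from Siegel \cite{Si} and Rogers \cite{R1}, so there is no internal argument to compare against. Your outline is a faithful reconstruction of the standard proof, and its reductions are all sound: the splitting of primitive pairs into $\vec{\mathbf{z_2}}=\vec{\mathbf{z_1}}$, $\vec{\mathbf{z_2}}=-\vec{\mathbf{z_1}}$, and linearly independent pairs is exhaustive (two primitive integer vectors that are dependent must agree up to sign); the diagonal terms do reduce to the primitive first-moment identity; the derivation of the $\zeta(l)^{-1}$ in that identity from the unrestricted Siegel formula via $\vec{\mathbf{w}}=k\vec{\mathbf{z}}$ is correct; and the transitivity-plus-unimodular-stabilizer argument legitimately forces the independent-pair functional to be $c'$ times Lebesgue measure on $\mathbb R^l\times\mathbb R^l$ (your measure visibly charges only independent pairs, and $l\geq 5$ gives both transitivity and the $L^2$ bound $\hat\chi_K\ll\alpha_l$ with $\alpha_l^2\in L^1$ needed for the Radon property).

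The one place where I would push back is your claim that the normalization $c'=\zeta(l)^{-2}$ follows by matching against the full independent-pair second moment, ``whose leading constant is $1$, computed by Rogers' direct fundamental-domain unfolding in parallel with Siegel's.'' That constant-$1$ statement is not a routine parallel of Siegel's computation: the $\SL_l(\mathbb Z)$-orbits on linearly independent pairs of integer vectors are indexed by Hermite normal forms $D=\left(\begin{smallmatrix} a & 0\\ c & d\end{smallmatrix}\right)$, each orbit unfolds to a multiple of $(\det D)^{-l}\int\!\!\int F$, and one must both compute the covolume of the common stabilizer (equivalently the unimodular-pair moment constant $\zeta(l)^{-1}\zeta(l-1)^{-1}$) and verify that the resulting sum over all $D$ collapses to exactly $1$. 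This orbit enumeration and bookkeeping is the actual content of Rogers' theorem, so as a self-contained proof your write-up has its entire weight resting on a step that is only gestured at; the same remark applies, less seriously, to the normalization $c=1$ in Siegel's formula. As a reduction of the stated proposition to the classical literature, however, your argument is correct and is, if anything, more informative than the paper's bare citation.
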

Again, by arguments similar to Section \ref{sec:affine}, it is easy to see that Theorem \ref{thm:main_lattice} is equivalent to the following theorem.
\begin{theorem}\label{thm:G_epsilon_L}
Under the hypothesis of Theorem \ref{thm:main_lattice},
    \begin{equation}\label{equ:G_epsilon_L}
       G_M^{(\varepsilon,L)}:=\frac{1}{\sqrt{M}} \left( \sum_{k=0}^{M-1}   \hat{h_{\varepsilon}}^{(L)} \circ c_0^k - M \int_{X_l} \hat{h_{\varepsilon}}^{(L)} \,\,  d\mu_l \right) \rightarrow N(0,\eta^2\sigma_u^2) \,\, \text{in distribution as} \,\, M \rightarrow \infty ,
   \end{equation}
   where $\hat{h_{\varepsilon}}^{(L)}=\hat{h_{\varepsilon}} \eta_L \in C_{c}^{\infty}(X_l)$ are smooth approximations to $\hat{\chi}$, $\sigma_u^2$ is given by Theorem \ref{thm:main_lattice} and $\eta=\vol(\Omega_2)^{1/2}$.

\end{theorem}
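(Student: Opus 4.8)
The plan is to verify the Fréchet–Shohat criterion (Theorem \ref{thm:CLT_crit}) for the bounded functions $G_M^{(\varepsilon,L)}$ on $(X_l,\mu_l)$, following the affine template of Section \ref{sec:affine}. The approximation estimates---the analogues of Proposition \ref{prop:H_epsilon_L} and Lemma \ref{lem:H_epsilon_L-H_2}, in which $\hat\chi$ is replaced by its smooth truncations $\hat{h_{\varepsilon}}^{(L)}$---reduce the statement to two assertions: that the variance $\sigma^2:=\lim_{M\to\infty}\|G_M^{(\varepsilon,L)}\|_2^2$ is finite and equals $\eta^2\sigma_u^2$, and that $\text{Cum}_{[r]}(G_M^{(\varepsilon,L)})\to 0$ for every $r\geq 3$. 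The cumulant estimate is carried out exactly as in the affine case: one replaces $G_a$ by $\SL_l(\mathbb R)$ and $\mu_a$ by $\mu_l$, and uses the (classical) $L^p$-bounds for $\alpha_l$ on $X_l$ together with the quantitative multiple mixing of Theorem \ref{thm:quan_mixing}, which is valid for the semisimple group $\SL_l(\mathbb R)$; the decomposition of Proposition \ref{prop:decom_R} and the coupling estimate \eqref{equ:coupling_final} then go through unchanged, since they depend only on the pointwise and Sobolev bounds for $\hat{h_{\varepsilon}}^{(L)}$ and not on any lattice-theoretic structure. Thus the only genuinely new computation is that of the variance, which is where Roger's formula \eqref{equ:Roger_lattice} enters and produces the constant $\sigma_u^2$.

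For the variance, the telescoping and dominated-convergence argument of \eqref{equ:series_affine} gives $\sigma^2=\sum_{k=-\infty}^{\infty}\int_{X_l}\phi_k\phi_0\,d\mu_l$ with $\phi_k=\hat\chi\circ c_0^k-\vol(\Omega_2)$. To evaluate $\int_{X_l}\hat\chi\,(\hat\chi\circ c_0^k)\,d\mu_l$ I would apply \eqref{equ:Roger_lattice} to the non-negative function $F_k(\vec z_1,\vec z_2):=\sum_{j_1,j_2\geq 1}\chi(j_1\vec z_1)\,\chi(j_2 c_0^k\vec z_2)$. Writing each nonzero integer vector uniquely as a positive multiple of a primitive one shows $\sum_{\vec z_1,\vec z_2\in P(\mathbb Z^l)}F_k(g\vec z_1,g\vec z_2)=\hat\chi(g\mathbb Z^l)\,(\hat\chi\circ c_0^k)(g\mathbb Z^l)$, so Roger's formula applies. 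Since $c_0\in\SL_l(\mathbb R)$, the leading term equals $\zeta(l)^{-2}\big(\sum_{j\geq 1}j^{-l}\big)^2\vol(\Omega_2)^2=\vol(\Omega_2)^2$, which exactly cancels the $-\vol(\Omega_2)^2$ from $\phi_k\phi_0$; hence $\int_{X_l}\phi_k\phi_0\,d\mu_l=\zeta(l)^{-1}\int_{\mathbb R^l}\big(F_k(\vec z,\vec z)+F_k(\vec z,-\vec z)\big)\,d\vec z$.

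It then remains to sum the diagonal contribution over $k$. Using that $\sum_{k\in\mathbb Z}\chi\circ c_0^k=\chi_\Omega$ almost everywhere, where $\Omega=\{(\mathbf x,\mathbf y):|x_i|\|\mathbf y\|^{u_i}<c_i,\ i=1,\dots,m\}$ is the cone already used in the congruence computation, one gets $\sum_k\int_{\mathbb R^l}F_k(\vec z,\vec z)\,d\vec z=\sum_{j_1,j_2\geq 1}\vol\big(j_1^{-1}\Omega_2\cap j_2^{-1}\Omega\big)$. A direct computation in spherical coordinates, as in \eqref{equ:omega_T_vol} and \eqref{equ:xi_Omega_vol}, gives $\vol(j_1^{-1}\Omega_2\cap j_2^{-1}\Omega)=\vol(\Omega_2)\,\max(j_1,j_2)^{-l}$, and the central symmetry of $\Omega_2$ yields $F_k(\vec z,-\vec z)=F_k(\vec z,\vec z)$, doubling the contribution. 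Summing $\max(j_1,j_2)^{-l}$ by counting the $2j-1$ pairs with $\max=j$ gives $\sum_{j_1,j_2\geq 1}\max(j_1,j_2)^{-l}=2\zeta(l-1)-\zeta(l)$, so $\sigma^2=\frac{2\vol(\Omega_2)}{\zeta(l)}\big(2\zeta(l-1)-\zeta(l)\big)=\eta^2\sigma_u^2$, which is the claimed variance; finiteness is automatic since the series converges for $l\geq 5$.

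The main obstacle is the careful bookkeeping in this variance computation: correctly passing from the full sum over $\Lambda\setminus\{0\}$ defining $\hat\chi$ to the primitive-vector sum demanded by \eqref{equ:Roger_lattice}, so that the $j_1,j_2$-multiplicities are accounted for exactly, and interchanging the summations over $k$ and over $(j_1,j_2)$ with the integral---licensed by non-negativity---so as to recognize $\sum_k\chi\circ c_0^k$ as $\chi_\Omega$ and collapse everything to the single weighted zeta sum $\sum\max(j_1,j_2)^{-l}$. Everything else, in particular the order-$r$ cumulant bounds, follows the affine argument and introduces no new ideas.
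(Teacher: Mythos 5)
Your proposal matches the paper's proof: the cumulant bounds are carried over verbatim from the affine case, and the variance is computed by applying Rogers' formula \eqref{equ:Roger_lattice} to the very same auxiliary function $F_k$, cancelling the $\zeta(l)^{-2}$ main term against $\vol(\Omega_2)^2$, using central symmetry to double the diagonal term, recognizing $\sum_k\chi\circ c_0^k=\chi_\Omega$, and evaluating $\sum_{j_1,j_2}\max(j_1,j_2)^{-l}=2\zeta(l-1)-\zeta(l)$ exactly as in \eqref{equ:xi_Omega_vol}. The argument and the resulting constant $\eta^2\sigma_u^2$ agree with the paper in every essential respect.
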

 By the CLT criteria of Fr\'{e}chet and Shohat, proving Theorem \ref{thm:G_epsilon_L} amounts to show that for some choice of parameters $\varepsilon$ and $L$ 

\begin{equation}\label{equ:lattice_CLT_condition_var}
\sigma^2 :=\lim_{M \rightarrow \infty} \|G_M^{(\varepsilon,L)}\|_2^2 < \infty
\end{equation}

\begin{equation}\label{equ:lattice_CLT_condition_cum}
\lim_{M \rightarrow \infty} \text{Cum}_{[r]} \left(G_M^{(\varepsilon,L)} \right)=0 \quad \text{for all} \,\, r \geq 3.
\end{equation}

The computation of cumulant (i.e., Equation \eqref{equ:lattice_CLT_condition_cum}) is analogous to the affine case (with appropriate modifications) discussed in detail in Section \ref{sec:affine}. Hence, we skip the details here. Rather, we do the computation of the variance.

\noindent \textbf{Computation of variance:} By an easy reduction we have that
\begin{equation}
    \sigma^2 = \lim_{M \rightarrow \infty} \|G_M^{(\varepsilon,L)}\|_2^2 =  \lim_{M \rightarrow \infty} \|G_M\|_2^2 < \infty,
\end{equation}
where
 \begin{equation}
G_M= \frac{1}{\sqrt{M}}   \sum_{k=0}^{M-1} \phi_k \quad \text{and} \quad \phi_k=\hat{\chi} \circ c_0^k -   \vol(\Omega_2).
\end{equation}
Also, arguments similar to Section \ref{sec:affine} give us 
\begin{equation}
\sigma^2=\lim_{M \rightarrow \infty} \|H_M\|_2^2 =\sum_{k=-\infty}^{\infty} \int_{X_l} \phi_{k} \phi_{0} \,\, d\mu_l = \sum_{k=-\infty}^{\infty} 
 \left(\int_{X_l} \hat{\chi} \cdot (\hat{\chi} \circ c_0^k) \,\, d\mu_l - \vol(\Omega_2)^2 \right).
\end{equation}
Now, by applying Rogers's second moment formula to the function
\begin{equation}
    F_k(\vec{\mathbf{z_1}},\vec{\mathbf{z_2}}):= \sum_{s_1,s_2 \in \mathbb N} \chi(s_1 c_0^k \vec{\mathbf{z_1}}) \chi(s_2 \vec{\mathbf{z_2}}), \,\, \,\, (\vec{\mathbf{z_1}},\vec{\mathbf{z_2}})\in \mathbb R^l \times \mathbb R^l,
\end{equation}
we get 
\begin{eqnarray}
\int_{X_l} \hat{\chi} \cdot (\hat{\chi} \circ c_0^k) \,\, d\mu_l &=&  \sum_{\vec{\mathbf{z_1}},\vec{\mathbf{z_2}} \in P(\mathbb Z^l)} F_k(g \vec{\mathbf{z_1}},g \vec{\mathbf{z_2}})  \,\, d\mu_l(g\mathbb Z^l) \\ \nonumber &=& \vol(\Omega_2)^2 + \zeta(l)^{-1} \sum_{s_1,s_2 \in \mathbb N} \left(\int_{\mathbb R^l} \chi(s_1 c_0^k \vec{\mathbf{z}}) \chi(s_2 \vec{\mathbf{z}}) \,\, d\vec{\mathbf{z}} + \int_{\mathbb R^l} \chi(s_1 c_0^k \vec{\mathbf{z}}) \chi(-s_2 \vec{\mathbf{z}}) \,\, d\vec{\mathbf{z}} \right) \\ &=& \vol(\Omega_2)^2 +2 \zeta(l)^{-1} \sum_{s_1,s_2 \in \mathbb N} \left(\int_{\mathbb R^l} \chi(s_1 c_0^k \vec{\mathbf{z}}) \chi(s_2 \vec{\mathbf{z}}) \,\, d\vec{\mathbf{z}} \right),
\end{eqnarray}
since 
\begin{eqnarray}
    \zeta(l)^{-2}\int_{\mathbb R^l \times \mathbb R^l} F_k(\vec{\mathbf{z_1}},\vec{\mathbf{z_2}}) \,\, d\vec{\mathbf{z_1}} \,\, d\vec{\mathbf{z_2}} &=&\zeta(l)^{-2}\sum_{s_1,s_2 \in \mathbb N} \vol\left(\frac{ 1}{s_1} c_0^{-k} \Omega_2 \right) \vol \left(\frac{1}{s_2} \Omega_2 \right) \\ &=& \zeta(l)^{-2} \vol(\Omega_2)^2 \sum_{s_1,s_2 \in \mathbb N} \frac{1}{s_1^l}\frac{1}{s_2^l} \\ &=& \vol(\Omega_2)^2.
\end{eqnarray}
Therefore, we have
\begin{eqnarray}
\sigma^2 &=& 2\zeta(l)^{-1} \sum_{k=-\infty}^{\infty} \sum_{s_1,s_2 \in \mathbb N} \left(\int_{\mathbb R^l} \chi(s_1 c_0^k \vec{\mathbf{z}}) \chi(s_2 \vec{\mathbf{z}}) \,\, d\vec{\mathbf{z}} \right) \\ &=& 2\zeta(l)^{-1} \sum_{s_1,s_2 \in \mathbb N} \int_{\mathbb R^l}  \left( \sum_{k=-\infty}^{\infty} \chi \circ c_0^k\right)(s_1 \vec{\mathbf{z}}) \,\, \chi(s_2 \vec{\mathbf{z}}) \,\, d\vec{\mathbf{z}} \\ &=& 2^{m+1} \zeta(l)^{-1}  (\log 2 ) c_1 \dots c_n \omega_n \sum_{s_1,s_2 \in \mathbb N}   \max \{s_1,s_2\}^{-l}, \,\, \text{by using}\,\, \eqref{equ:xi_Omega_vol} \\ &=& 2^{m+1} \zeta(l)^{-1}  (\log 2 ) c_1 \dots c_n \omega_n \left(\sum_{s=1}^{\infty} s_1^{-l} +2\sum_{1 \leq s_1 < q} s_2^{-l}\right) \\ &=& 2^{m+1} \zeta(l)^{-1}  (\log 2 ) c_1 \dots c_n \omega_n \left(\zeta(l)+ 2 \sum_{s_2 \geq 1} \frac{s_2-1}{s_2^{l}}\right) \\ &=& 2^{m+1} \zeta(l)^{-1} (\log 2 ) c_1 \dots c_n \omega_n (2 \zeta(l-1)-\zeta(l)) \\ &=& 2^{m+1} (\log 2 ) c_1 \dots c_n \omega_n \left(\frac{2\zeta(l-1)}{\zeta(l)}-1\right).
\end{eqnarray}
Hence,
\begin{eqnarray}
    \sigma^2_{u}=\frac{\sigma^2}{\eta^2}=\frac{\sigma^2}{\vol(\Omega_2)}=2\left(\frac{2\zeta(l-1)}{\zeta(l)}-1\right).
\end{eqnarray}

\end{document}